\newcommand{\E}[1]{\mathbb{E}\left[#1\right]}
\newcommand{\ind}[1]{\mathbb{I}_{\left\{#1\right\}}}
\newcommand{\N}{\boldsymbol{N}}
\newcommand{\M}{\boldsymbol{M}}
\newcommand{\Q}{\boldsymbol{Q}}
\newcommand{\R}{\boldsymbol{R}}
\newcommand{\X}{\boldsymbol{X}}
\newcommand{\Y}{\boldsymbol{Y}}
\newcommand{\W}{\boldsymbol{W}}
\newcommand{\U}{\boldsymbol{U}}
\newcommand{\Z}{\boldsymbol{Z}}
\newcommand{\x}{\boldsymbol{x}}
\newcommand{\z}{\boldsymbol{z}}
\newcommand{\xii}{\boldsymbol{\xi}}
\newcommand{\nuu}{\boldsymbol{\nu}}
\newcommand{\n}{\boldsymbol{n}}
\newcommand{\m}{\boldsymbol{m}}
\newcommand{\e}{\boldsymbol{e}}
\newcommand{\Ex}[1]{\mathbb{E}_{\x}\left[#1\right]}
\newcommand{\norm}[1]{\|#1\|_1}
\newcommand{\Nth}{^{(N)}}
\newcommand{\sth}{^{(\sigma)}}
\DeclareMathOperator{\cov}{Cov}
\begin{document}


\section{Introduction}

We are interested in evolutionary models for a large, randomly mating population in which genetic frequencies undergo stochastic changes over time owing to random genetic drift and other evolutionary processes. We study two closely related stochastic models, the \emph{Wright--Fisher (WF) diffusion} with mutation and selection, which models genetic frequencies evolving forwards in time in the population, and its dual, the \emph{Ancestral Selection Graph (ASG)}, which evolves backwards in time and models the ancestral history of a sample of genetic sequences taken from some individuals in the same population.
Interest in these models is often motivated by the desire to calculate the \emph{sampling probability}, the probability of observing a certain configuration of alleles when sampled from the population, which is typically the quantity of interest for statistical inference. Under certain duality conditions, discussed later, the sampling probability for the two models are equivalent. However, except for the special case of a parent-independent mutation (PIM) model, the implicit nature of the expressions poses a significant obstacle. 


Let us give a more detailed description of the models under consideration. We assume a finite number $d$ of genetic types, or alleles, and a general mutation mechanism with mutation rate parametrised by $\theta$ and mutation probability matrix $P$; that is, when a type $i$ undergoes mutation its offspring is of type $j$ with probability $P_{ij}$. 
We make the standard assumption of $P$ being an irreducible matrix to ensure the existence of a stationary density for the WF diffusion (see Section \ref{sect:stationary} for more details). It should be noted that this assumption is not required for the results of Section \ref{sect:diffusions}. 
We consider single-locus haploid models where the selective pressure on the population is expressed by the selection parameters $\sigma_{i}, i=1,\dots, d$ representing the fitness of allele $i$. 
We consider the scenario in which the selective advantage of one allele (without loss of generality allele $1$ throughout) grows to infinity, while all the other parameters remain fixed. 
That is, we analyse the asymptotic regime given by $\sigma_1=\sigma\to\infty$. We use the superscript $(\sigma)$ to emphasise the dependence on this parameter. 
Under this asymptotic regime, we study the behaviour of two processes, the WF diffusion and the ASG, and provide a full asymptotic expansion of the sampling probability. Furthermore, we establish an asymptotic duality relationship between two limiting processes. 


Our asymptotic analysis, besides contributing to the theory of WF diffusions and ancestral processes, is also relevant for, and motivated by,  applied problems.
First, the full expansion of the sampling probability  provides, in the strong selection regime, an approximation to the true sampling probability which is unknown. That it is unknown poses a significant obstacle in inference.
Furthermore, we derive a diffusion approximation  that  can be simulated exactly, as discussed later, and thus provides an alternative way of  simulating the WF diffusion under strong selection in those cases in which an exact simulation algorithm is not available.
Exact simulation algorithms for the  WF diffusion are available in the multi-allele PIM case \cite{jenkins2017, sant2023}; however, a generalisation to the case with a general mutation mechanism is neither available, nor straightforward. 
Finally, the asymptotic analysis of the ASG and its duality complete the picture, providing a genealogical interpretation. We now describe more precisely the main results and the outline of the paper.


A complete asymptotic analysis of the sampling probability, in Section \ref{sect:sampling_prob},  constitutes the main contribution of this paper.
As a first step we derive a \emph{Gamma approximation} of the sampling probability (Proposition \ref{thm:gamma_sampling}) 
to replace the classical \emph{Gaussian approximation} which fails in this setting, see e.g.\ \cite{jenkins2015} for a strong-recombination setting in which the Gaussian approximation \emph{can} be applied.
The role of the Gamma approximation is to guide the choice of the correct asymptotic scaling by indicating that each unfit allele in the sample introduces an additional multiplicative factor $\sigma^{-1}$ to the decay of the sampling probability, unlike the $\sigma^{-\frac{1}{2}}$ one would expect with the Gaussian approximation originating from the classical central limit theorem. 
However, this approximation provides only the first order of the asymptotic expansion of the sampling probability.

To provide a full asymptotic expansion of the sampling probability in inverse powers of $\sigma$ 
we use instead a recursion formula for the sampling probability. This allows us to derive a general characterisation of the coefficients in the expansion to all orders (Theorem \ref{thm:characterisation_asyptotic_coefficients}). 
We also provide more explicit expressions for the first two coefficients and, in the special case of parent-independent mutation, for all of the coefficients (Proposition \ref{thm:characterisation_asyptotic_coefficients_PIM}). 


In order to understand the origin of the Gamma approximation and the lack of a Gaussian approximation in this setting, the asymptotic and stationary behaviour of the WF diffusion must be studied. 
In Section \ref{sect:diffusions}, we analyse the asymptotic behaviour of the WF diffusion (summarised in Proposition \ref{thm:diffusion_limits}) through a law-of-large-number type result, i.e.\ as the diffusion noise vanishes we obtain a logistic trajectory with an equilibrium point on the boundary of the state space; and two different central-limit-theorem type results: (i) a Gaussian process as a limit of the scaled fluctuations around this logistic trajectory, and (ii) a special type of continuous-state branching processes with immigration (CBI processes), or Cox--Ingersoll--Ross (CIR) process, as a limit of the (differently) scaled fluctuations around the equilibrium point. 
The equilibrium point being on the boundary is precisely the reason that this asymptotic regime is characterised by two different central-limit-theorem type results, instead of only by the classical Gaussian central limit theorem. 

The limiting processes we obtain naturally correspond to those obtained from the analogous asymptotic analysis of the corresponding---this time discrete in both time and space---WF model as the population size goes to infinity and parameters are scaled appropriately; this is sketched in the Appendix. 
More precisely, the logistic trajectory and the Gaussian limits are known in the literature of WF models as part of rather general 
results concerning various types of asymptotic regimes \cite{Norman1972,Norman1975,Nagylaki1986}. Results of this type have found use in modelling selective sweeps \cite{Feder2014,lacerda2014}; see Section \ref{sect:diffusions} for more details on the literature. 
The CBI limit, on the other hand, which appears in \cite{Ethier1988} and \cite{Nagylaki1990} in a study concerning rare alleles, seems nonetheless to be less well known. 

Section \ref{sect:diffusions} has a twofold nature. Because some of our results have their analogues with existing results for strong selection in \emph{discrete} WF models, in part it provides a short review of the literature of WF models in the specific asymptotic regime of interest.   
Second, we focus on the WF diffusion itself and employ different methods that,  being applied to a sequence of WF diffusions, rather than a corresponding sequence of WF models, allow more sleek calculations and a lighter notation,  which facilitate understanding and provide a clear picture of the strong-selection asymptotic regime. 
Another way of putting this is that asymptotic results for WF models  require a joint consideration of both population size and/or selection parameter going to infinity at appropriate rates; 
on the contrary for the corresponding diffusion model the population size has effectively  been taken to infinity already, enabling us to concentrate directly on the effects of strong selection. 
The former approach---arguably more natural from a modelling perspective but involving less transparent calculations---is the focus of the Appendix.


Section \ref{sect:stationary} is dedicated to the asymptotic analysis of stationary distributions. 
Because the equilibrium point of the logistic trajectory is on the boundary of the state space, the Gaussian process becomes degenerate at stationarity. 
This explains the lack of the classical Gaussian approximation of the sampling probability and it also strongly motivates the study of the CBI limit, which instead has a non-degenerate stationary distribution with independent Gamma-distributed components and which leads to the alternative Gamma approximation. 


To complete the asymptotic analysis of strong selection we study the asymptotic behaviour of the ASG, in Section \ref{sect:genealogy}. In particular, we consider the block-counting process of a  \emph{conditional} ASG with typed lineages (rather than, as is common, with types superimposed afterwards), and study its asymptotic behaviour. After `reducing' the graph, we obtain two limiting processes acting on a slow and a fast timescale, as was shown in \cite{wakeley2008} for a model with two alleles. Our analysis generalises the analysis of \cite{wakeley2008} from the two- to the multi-allele case, and, in addition, 


in Section \ref{sect:duality}, we also derive an asymptotic duality relationship between the limiting CBI process and the limiting ancestral processes acting on the fast timescale (Theorem \ref{thm:duality}).  
This is, to the best of our knowledge, the first asymptotic duality result of this kind.   
The derived duality relationship provides additional structure relating the various asymptotic objects and further confirms the major role of the CBI limit, compared to the Gaussian limit, in this particular asymptotic regime.


At the time of finalising the writing of this paper, we became aware of the very recent work of Fan and Wakeley \cite{fan2024},  which partly overlaps some of the problems treated here. 
More precisely, for the special case of two alleles, which falls under the PIM assumption,  \cite{fan2024} also calculate the first coefficient of the asymptotic expansion of the sampling probability and the CIR limit of the WF diffusion.  This constitutes the overlap between \cite{fan2024} and this paper. Nevertheless, the two papers focus on different directions. 
Our work provides a generalisation of the above-mentioned results to the general multi-allele and  parent-dependent mutation framework. In addition, we establish a new asymptotic duality relationship and derive a full asymptotic expansion of the sampling probability by characterising the coefficients of all orders. We use a significantly different approach which does not rely on the explicit expressions that are only available under the PIM assumption. 
Instead, \cite{fan2024} focus on other points which are not studied here: while their work is restricted to two alleles and to the first-order of the expansion of the sampling probability, they provide new results on the number of latent mutations, the limit of infinite sample size, and they also study the complication of a changing population size.


Finally, we note that this  work on large selection can be placed  in a wider framework of `large parameter' regimes. 
In particular, an exhaustive asymptotic analysis for strong recombination rates has been performed in \cite{alberti2023, bs2012, jenkins2015, jenkins2009,jenkins2010,jenkins2012}, which provide asymptotic expansions for the sampling probability as well as limiting Gaussian and ancestral processes and also inspired our analysis.


\section{Two different diffusion limits}
\label{sect:diffusions}

The main object of this section is the  WF diffusion $\X^{(\sigma)}=\{ \X^{(\sigma)}(t)\}_{t\geq 0}$, defined below, with mutation and selection. More precisely the model comprises $d$ alleles, mutation rate $\theta$, mutation probability matrix $P$, and selection parameters $\sigma_i$, $i=1,\dots,d$. We study its asymptotic behaviour as $\sigma_1=\sigma\to\infty$. 
The diffusion $\X^{(\sigma)}$ 
evolves  on the simplex $\Delta= \{\x\in [0,1]^d: \sum_{i=1}^d x_i=1 \}$
and is characterised by its infinitesimal generator 
\begin{align}
	\nonumber
	&\mathcal{L}^{(\sigma)}= \frac{1}{2}
	\sum_{i,j=1}^d 
	d_{ij}(\x)     \frac{\partial^2}{\partial x_i \partial x_j }
	+ \sum_{i=1}^d \left[\mu_i(\x)+ s_i(\x) \right]
	\frac{\partial}{\partial x_i },
	\\
	\label{generatorWFdiff}
	& \ \text{with} \quad 
	d_{ij}(\x)=x_i (\delta_{ij}-x_j) ,
	\quad 
	\mu_i(\x)= 
	\frac{\theta}{2}  \left( \sum_{j=1}^d P_{ji}x_j - x_i   \right),
	\quad
	s_i(\x)= 
	\frac{x_i}{2}  \left(\sigma_i - \sum_{j=1}^d \sigma_j x_j \right),
\end{align}
and domain $ C^2(\Delta)$; or equivalently, by the SDE
\begin{align}
	\label{eq:WF_SDE}
	d\X\sth(t)&=[ \mu(\X\sth(t)) + s(\X\sth(t)) ] dt + D^{1/2}(\X\sth(t)) d\W(s), 
	& \X\sth(0) &= \x\sth(0),
\end{align}
where $\mu(\x)=(\mu_i(\x))_{i=1}^d$, $s(\x)=(s_i(\x))_{i=1}^d$ and $D(\x)=(d_{ij}(\x))_{i,j=1}^d$ being a positive semidefinite matrix, with a square root $D^{1/2}$ satisfying $D = D^{1/2}(D^{1/2})^\top$. 
See for example \cite[Ch.\ 10]{Ethier1986} for more details on the WF diffusion.

The diffusion $\X\sth$ arises as the limit of a sequence of WF models as the population size grows to infinity under the assumption that the strength of mutation, selection and genetic drift balance each other in this limit. 
For this reason, our analysis on the asymptotic behaviour of the WF diffusion yields analogous results to the asymptotic analysis of its finite-population size counterpart; see the Appendix for details. 
As mentioned in the introduction, some of these asymptotic results for WF models are well known, i.e.\ the deterministic limit and the Gaussian limit, while others, i.e.\ the CBI limit, are lesser known and even sometimes overlooked, which is why we believe it is valuable to review them here by matching them to the analogous results that we derive for the WF diffusion and by citing the relevant literature at each step.
We choose to work with the WF diffusion (and to confine the re-derivation of results from  WF models to the Appendix for comparison) both because it allows sleeker calculations, which in turn facilitate understanding and intuition regarding the limiting behaviours; and because it relies on a different (and convenient) theoretical framework based on powerful tools, such as SDEs, infinitesimal generators and martingales.
To facilitate the reading, we first concisely state the three main convergence results in the following proposition. 

Here, and throughout the paper, we denote convergence in distribution by the arrow  $\xrightarrow[]{d}$. In particular, 
as is customary \cite{billingsley1999, Ethier1986},
the convergence in distribution of a sequence of stochastic processes refers to the weak convergence of the corresponding sequence of probability measures, their  distributions or laws, in the Skorokhod space, i.e.\ in the space of c\`adl\`ag (right continuous with left limits)  functions on the time interval $[0,\infty)$  taking values in the state space of the processes, equipped with the classical Skorokhod topology. 

\begin{proposition}
\label{thm:diffusion_limits}
Let $\X\sth$ be the WF diffusion defined as the solution to \eqref{eq:WF_SDE}. Then the following convergence results hold in the strong selection limit. 
\\
\textbf{(a) $\X^{(\sigma)}$ converges to a deterministic logistic trajectory}. 
That is, assuming  $\X\sth(0)\xrightarrow[]{d}\xii(0),$ as $\sigma\to\infty$, we have
    \begin{align*}
    \left\{\X^{(\sigma)}(t/\sigma)\right\}_{t\geq 0}
    \xrightarrow[]{d}  \left\{\xii(t)\right\}_{t\geq 0} ,
    \quad \text{as} \quad 
    \sigma\to\infty,
    \end{align*}
where $\xii$ is the logistic trajectory defined by  the following  ODE 
    \begin{align}
    \label{eq:ODE_deterministic_trajectory}
    \frac{d}{dt} \xii (t)= 
    \boldsymbol{\omega}(\xii(t)), 
    \quad \text{with} \quad 
    \boldsymbol{\omega}(\x)=  \frac{x_1}{2} (\e_1 -  \x ),
    \end{align}
or, more explicitly,  
\begin{align*}
    \xi_i (t)= 
    \frac{\xi_i(0) e^{\frac{t}{2}\delta_{1 i }}}{ \xi_1(0)e^{\frac{t}{2}}+ 1-\xi_1(0)}
    \end{align*}
with equilibrium point  $\xii(\infty)=\e_1 := (\delta_{1i})_{i=1}^d$, if $\xi_1(0)>0$; and $\xii(\infty)=\xii(t)=\xii(0)$, if $\xi_1(0)=0$. 
\\
\textbf{(b) The $\sqrt{\sigma}$-scaled fluctuations of $\X\sth$ around the logistic trajectory $\xii$ converge to a Gaussian process}.  
That is, assuming  
$\sqrt{\sigma}\left[\X^{(\sigma)}(0) - \xii(0) \right]\xrightarrow[]{d}\U(0),$ as $\sigma\to\infty$, we have
    \begin{align*}
    \left\{\sqrt{\sigma}\left[\X^{(\sigma)}(t/\sigma) - \xii(t) \right]\right\}_{t\geq 0}
    \xrightarrow[]{d} 
    \left\{\U(t)\right\}_{t\geq 0} ,
    \quad \text{as} \quad 
    \sigma\to\infty,
    \end{align*}
where $\U$ is  the Gaussian process
defined as the solution to the following linear SDE with time-dependent coefficients:
    \begin{align}
    \label{eq:SDE_Gaussian}
    d\U(t)&=
    J\boldsymbol{\omega} (\xii(t))  \U(t)  dt
    + 
    D^{1/2}(\xii (t)) d\W(t),
    \end{align}
with $\boldsymbol{\omega}$ as in \eqref{eq:ODE_deterministic_trajectory} and $J$ denoting the Jacobian matrix, i.e.\ $[J\boldsymbol{\omega} (\xii(t))]_{ij} = \frac{d}{dx_j}\omega_i(\xii(t))$, 
and thus, more explicitly, 
    \begin{align*}
    J\boldsymbol{\omega} (\xii(t))  \U(t) =
    - \frac{1}{2} \left\{
    U_1(t) \left[ \xii (t) - \e_1  \right]
    +
    \xi_1(t) \U(t) \right\}  .
    \end{align*}
\textbf{(c) The $\sigma$-scaled fluctuations of $\X\sth$ around the equilibrium point $\e_1$ converge to CBI processes}. That is, assuming  
${\sigma}\left[\X^{(\sigma)}(0) - \e_1 \right]\xrightarrow[]{d}\Z(0),$ as $\sigma\to\infty$, we have
    \begin{align*}
    \left\{\sigma\left[\X^{(\sigma)}(t/\sigma) - \e_1 \right]\right\}_{t\geq 0}
    \xrightarrow[]{d}  
    \left\{\Z(t)\right\}_{t\geq 0}  ,
    \quad \text{as} \quad 
    \sigma\to\infty,
    \end{align*}
where for $i=2,\dots d$ the components $Z_i$ are the independent CBI processes defined by the following SDEs:
    \begin{align}
    \label{eq:SDE_CBI}
    d Z_i(t)&=  \frac{1}{2} (\theta P_{1i} - Z_i(t))dt + \sqrt{Z_i(t)} d W_i(t),
    \end{align}   
and $Z_1(t)= - \sum_{i=2}^d Z_i(t)$. 
\end{proposition}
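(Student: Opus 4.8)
The plan is to prove all three convergence results by the standard generator-convergence approach for Markov processes: identify the generator of each scaled process, compute its limit as $\sigma\to\infty$, recognize the limit as the generator of the claimed limiting process, and then invoke a convergence theorem (e.g.\ \cite[Ch.\ 4, Thm.\ 8.2 or Cor.\ 8.7]{Ethier1986}) together with relative compactness/tightness and uniqueness of the martingale problem for the limit. The three parts share this skeleton but differ in the scaling and time change, so I would treat them in sequence, reusing computations.

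\emph{Part (a).} For the time-changed process $\X^{(\sigma)}(t/\sigma)$, the generator is $\sigma^{-1}\mathcal{L}^{(\sigma)}$. I would apply this to a test function $f\in C^2(\Delta)$ and expand each term. The diffusion coefficients $d_{ij}(\x)$ are $O(1)$, so after dividing by $\sigma$ the second-order part contributes $O(\sigma^{-1})\to 0$; similarly the mutation drift $\mu_i(\x)$ is $O(1)$ and vanishes after scaling. The only surviving term comes from the selection drift: since $\sigma_1=\sigma$ and all other $\sigma_j$ are fixed, $s_i(\x)=\tfrac{x_i}{2}(\sigma_i-\sum_j\sigma_j x_j)$ has leading behaviour $\tfrac{\sigma}{2}x_i(\delta_{1i}-x_1)$, and dividing by $\sigma$ yields exactly $\omega_i(\x)=\tfrac{x_1}{2}(\delta_{1i}-x_i)$, i.e.\ the components of $\boldsymbol{\omega}(\x)=\tfrac{x_1}{2}(\e_1-\x)$. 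Thus $\sigma^{-1}\mathcal{L}^{(\sigma)}f\to \boldsymbol{\omega}(\x)\cdot\nabla f$ uniformly on $\Delta$, which is the generator of the deterministic ODE flow \eqref{eq:ODE_deterministic_trajectory}; convergence to the (degenerate, deterministic) limit then follows, and one checks the closed-form solution $\xi_i(t)$ solves the ODE by direct differentiation.

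\emph{Parts (b) and (c).} For the Gaussian limit I would set $\U^{(\sigma)}(t)=\sqrt{\sigma}[\X^{(\sigma)}(t/\sigma)-\xii(t)]$ and compute its generator via the change of variables $\x\mapsto \u=\sqrt{\sigma}(\x-\xii(t))$, being careful to include the $-\sqrt{\sigma}\,\dot{\xii}(t)\cdot\nabla_{\u}$ term arising from the explicit time dependence of the centering. A Taylor expansion of the drift $\sigma^{-1}[\mu+s]$ around $\xii(t)$ to first order in $\u/\sqrt{\sigma}$ produces the linear drift $J\boldsymbol{\omega}(\xii(t))\u$ (the zeroth-order piece cancels against $\dot{\xii}$), while the diffusion term $\tfrac12\sum d_{ij}(\x)\partial^2_{x_ix_j}$ rescales to $\tfrac12\sum d_{ij}(\xii(t))\partial^2_{u_iu_j}$ in the limit; this is precisely the generator of the linear SDE \eqref{eq:SDE_Gaussian}. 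For the CBI limit I would instead set $\Z^{(\sigma)}(t)=\sigma[\X^{(\sigma)}(t/\sigma)-\e_1]$. The crucial difference is the stronger scaling by $\sigma$: near the boundary point $\e_1$ one has $x_1=1-\sum_{i\ge2}x_i$, so $Z_i=\sigma x_i$ for $i\ge2$ and $x_1-1=-\sigma^{-1}\sum_{i\ge2}Z_i$; substituting $x_i=Z_i/\sigma$ into $d_{ii}(\x)=x_i(1-x_i)$ gives $\sigma^{-1}Z_i(1-Z_i/\sigma)$, and the $\sigma^2$ from the second-derivative rescaling combined with the $\sigma^{-1}$ time change produces a finite diffusion coefficient $Z_i$; meanwhile the mutation inflow $\tfrac{\theta}{2}P_{1i}x_1\approx\tfrac{\theta}{2}P_{1i}$ survives as immigration and the selection term contributes the $-\tfrac12 Z_i$ linear drift, yielding exactly \eqref{eq:SDE_CBI}. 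The off-diagonal $d_{ij}$ terms and cross-type mutation terms must be checked to vanish at this scaling, which is what decouples the components into \emph{independent} CIR/CBI processes with $Z_1=-\sum_{i\ge2}Z_i$ fixed by the simplex constraint.

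\emph{The main obstacle} I anticipate is not the generator computations, which are routine Taylor expansions, but the analytic rigor needed to convert generator convergence into process-level convergence in the Skorokhod topology. Specifically, one must establish relative compactness (tightness) of the laws of the scaled processes and verify that the limiting martingale problems are well-posed (uniqueness of solutions). For part (c) this is delicate because the limiting CBI/CIR processes live on the boundary-constrained state space and their square-root diffusion coefficient $\sqrt{Z_i}$ is only Hölder-$\tfrac12$, so one cannot appeal to Lipschitz uniqueness directly; instead I would invoke Yamada--Watanabe-type pathwise uniqueness for CIR-type SDEs, or cite known well-posedness of CBI processes. A further subtlety is that the scaled processes $\Z^{(\sigma)}$ take values in an expanding domain (the simplex blown up by a factor $\sigma$), so tightness arguments must control excursions and confirm the limit remains in the nonnegative orthant $\{Z_i\ge0\}$; I would handle this via the Stroock--Varadhan martingale-problem machinery \cite[Ch.\ 4]{Ethier1986}, checking the generator convergence holds on a core of suitably decaying test functions and that no mass escapes.
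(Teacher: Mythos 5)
Your proposal is correct in outline and, for parts (a) and (c), takes essentially the same route as the paper: for (a), uniform convergence of $\sigma^{-1}\mathcal{L}^{(\sigma)}$ on $\Delta$ to the first-order operator $\boldsymbol{\omega}\cdot\nabla$ (the paper likewise isolates the surviving selection term and cites \cite[Thm 6.1, Ch 1, and Thm 2.5, Ch 4]{Ethier1986}); for (c), the change of variables $\z=\sigma(\x-\e_1)$ in the generator, dropping the first (redundant) coordinate, identifying the diffusion coefficient $z_i$, the immigration $\tfrac{\theta}{2}P_{1i}$ and the $-\tfrac12 z_i$ drift, and again citing Ethier--Kurtz. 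Where you genuinely diverge is part (b): you propose computing the \emph{time-inhomogeneous} generator of $\U^{(\sigma)}(t)=\sqrt{\sigma}[\X^{(\sigma)}(t/\sigma)-\xii(t)]$, including the $-\sqrt{\sigma}\,\dot{\xii}(t)\cdot\nabla$ correction from the moving centering. The paper instead avoids time-dependent generator calculus entirely: it decomposes $\U^{(\sigma)}(t)$ into the initial condition, the integrated drift error $b_\sigma\int_0^t[\boldsymbol{\omega}^{(\sigma)}(\Y^{(\sigma)}(s))-\boldsymbol{\omega}(\xii(s))]\,ds$, and a rescaled local martingale, and then verifies the hypotheses of Theorem 2.11 of \cite{kang2014} (vanishing covariations, identification of $G_0=0$ and $G=D$, martingale CLT), which simultaneously yields tightness, identification of the limit, and the correct scaling $b_\sigma=\sqrt{\sigma}$. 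Your route is viable but would require homogenizing in space--time and verifying generator convergence on a core for a time-dependent limiting operator, which is precisely the bookkeeping the martingale route avoids. Conversely, the analytic worries in your final paragraph (tightness, Yamada--Watanabe uniqueness for the square-root coefficient, the expanding domain in (c)) are dispatched in the paper more cheaply than you anticipate: uniform convergence of generators on compactly supported $C^2$ functions plus well-posedness of the limiting CIR/CBI martingale problem (a standard fact) is all that \cite[Prop.\ 3.3 and Thm 6.1, Ch 1, and Thm 2.5, Ch 4]{Ethier1986} requires, so no separate tightness or excursion-control argument is needed.
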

	\begin{proof}
		We show (a) at the end of this section; (b)  in Section \ref{sect:diffusion_gaussian}; and (c)  in Section \ref{sect:diffusion_CBI}.
	\end{proof}
	In a more compact expression, $\Z$ solves the SDE
	\begin{align}
		\label{eq:fluctuationsSDE}
		d \Z(t)=  
		\frac{1}{2}[\theta(\boldsymbol{P}_{1\boldsymbol{\cdot}}- \e_1)  -  \Z(t) ]dt
		+    B(\Z(t)) d \W (t)
	\end{align}
	where 
	\begin{align*}
		B(\z)= 
		\begin{pmatrix}
			0      & -\sqrt{z_2} & -\sqrt{z_3}& \cdots & -\sqrt{z_d} \\
			0      & \sqrt{z_2} & 0         & \cdots & 0           \\
			\vdots & \ddots     &\sqrt{z_3} & \ddots &  \vdots    \\
			\vdots &            &  \ddots   & \ddots &  0           \\
			0  &   \cdots   &  \cdots   & 0      & \sqrt{z_d}\\
		\end{pmatrix},
	\end{align*}
and $\boldsymbol{P}_{1\boldsymbol{\cdot}} := (P_{1i})_{i=1}^d$. Before moving on to a more detailed description of the limiting processes and their derivations, let us make some remarks. 
It is interesting to note that the equilibrium point $\e_1$ of the logistic trajectory $\xii$ is on the boundary of the state space $\Delta$. This is a key feature of the asymptotic regime of this paper. In fact, it is exactly this feature that causes the two different asymptotic behaviours (b) and (c) in Proposition \ref{thm:diffusion_limits}. In other asymptotic regimes, in which the equilibrium point is in the interior of the state space, only the Gaussian behaviour is present, e.g.\ \cite[Sect. 9]{Feller1951} for the WF model. 
Furthermore, note that in (c) the effect of mutation (from the fit allele to unfit alleles) is retained in the limit, whereas in (a) and (b) this effect is totally absent. 
In Section \ref{sect:stationary}, we show how this makes (c) a more powerful result at stationarity: as $t$ grows, the logistic trajectory gets closer to its equilibrium point on the boundary, and the effect of mutation becomes crucial to approximate the WF diffusion.

\subsubsection*{Derivation of the deterministic limit---proof of Proposition \ref{thm:diffusion_limits}(a)}
	

From \eqref{generatorWFdiff},
and by rewriting
    \begin{align*}
    s_i(\x)= 
    \sigma \frac{x_1}{2} (\delta_{1i} -  x_i )
    + \frac{1}{2}\sigma_i x_i (1-\delta_{1i} ) 
    -\frac{x_i}{2}\sum_{j=2}^d \sigma_j x_j  ,
    \end{align*}
it is straightforward to obtain  
	\begin{align*}
    &\frac{1}{\sigma}\mathcal{L}^{(\sigma)} 
    =
    \sum_{i=1}^d 
    \omega_i(\x)
    \frac{\partial}{\partial x_i }
    + \frac{1}{\sigma} \sum_{i=1}^d h_i(\x)\frac{\partial}{\partial x_i }
    + \frac{1}{2 \sigma}
	\sum_{i,j=1}^d 
	d_{ij}(\x)     \frac{\partial^2}{\partial x_i \partial x_j }
    ,
	\end{align*}
where 
	$
	\omega_i (\x)=  \frac{x_1}{2} (\delta_{1i} -  x_i ) 
	$ 
is defined in \eqref{eq:ODE_deterministic_trajectory}, and where the function
	\begin{align}
    \label{eq:fct_h}
    h_i(\x)=\mu_i(\x) 
    + \frac{1}{2}\sigma_i x_i (1-\delta_{1i} ) 
    -\frac{x_i}{2}\sum_{j=2}^d \sigma_j x_j  
	\end{align} 
does not depend on $\sigma$ and is bounded on $\Delta$.
This implies the convergence of the generators uniformly in $\Delta$, i.e.\  for all twice continuously differentiable functions $f$,
\begin{align*}
    &\lim_{\sigma\to\infty} \sup_{\x \in \Delta}
    \left| 
    \frac{1}{\sigma}\mathcal{L}^{(\sigma)} f(\x)
    -\sum_{i=1}^d 
    \omega_i(\x) \frac{\partial}{\partial x_i }f(\x)
    \right|
    =0.
	\end{align*}
Note that $\sum_{i=1}^d \omega_i(\x)\frac{\partial}{\partial x_i }$  is the generator of the deterministic trajectory $\xii$. Then,
assuming $\X\sth(0)\xrightarrow[]{d}\xii(0),$ as $\sigma \to\infty, $
classical arguments imply weak convergence \cite[Thm 6.1, Ch 1, and Thm 2.5, Ch 4]{Ethier1986}.
\qed

\subsection{Gaussian fluctuations around the logistic trajectory}
\label{sect:diffusion_gaussian}

In this section we describe and  derive the Gaussian limit  (b) of Proposition \ref{thm:diffusion_limits}  through the convergence of infinitesimal generators and a martingale approach. 
First, we recall that analogous limits can be found in the literature for WF models, 
assuming that selection on the first allele is stronger than selection on the other alleles as well as on mutation and genetic drift; see the Appendix and e.g.\ \cite{Norman1972,Norman1975} and \cite{Nagylaki1986} which cover not only the asymptotic regime we consider but also more general large-parameter regimes.
However, the results of Norman, being rather general, require  additional calculations and verification of conditions in order to be explicated in the various cases of interest, making the interpretation not immediate. 

Let us now describe in detail the limiting Gaussian process $\U$, then, in the next section, we show how to derive it from the WF diffusion $\X\sth$. 
	It is straightforward, by using the textbook idea of variation of constants, e.g.\ \cite{baldi2017},  to derive an explicit expression for the solution of a linear SDE with time-dependent coefficients, in this case, 
	\begin{align*} 
		\U(t)= 
		e^{\Lambda(t)}\U(0)
		+ e^{\Lambda(t)} \int_0^t e^{-\Lambda(s)}D^{1/2}(\xii (s)) d\W(s),
	\end{align*}
	where $\Lambda(t)= \int_0^t J\boldsymbol{\omega} (\xii(s))ds$. This Gaussian process has respective mean and covariance functions
	\begin{align*}
		&m(t)=   e^{\Lambda(t)}\E{\U(0)},
		\\
		&C(t)=  \int_0^t e^{\Lambda(t)-\Lambda(s)}D(\xii (s)) e^{\Lambda(t)^\top-\Lambda(s)^\top} ds,
	\end{align*}
	which satisfy 
	\begin{align}
		\nonumber
		& \frac{d}{dt } m(t)= J\boldsymbol{\omega} (\xii(t)) m(t) ,
		\\ \label{eq:mean_cov_PDE}
		&  \frac{d}{dt }  C(t)= 
		D(\xii (t)) + J\boldsymbol{\omega} (\xii(t)) C(t) + C(t) J\boldsymbol{\omega} (\xii(t))^\top. 
	\end{align}
	
\subsubsection*{Derivation of the Gaussian limit---proof of Proposition \ref{thm:diffusion_limits}(b) }
	
Let $\Y\sth(t)=\X^{(\sigma)}(t/\sigma)$ and
$\U^{(\sigma)}(t)= b_\sigma[\Y^{(\sigma)}(t) - \xii(t) ]$ where $b_\sigma$ is a scaling factor yet to be determined. We derive  the limit $\U$ of the sequence $\U^{(\sigma)}$, as $\sigma\to\infty,$  and show that the appropriate choice of the scaling sequence is $b_\sigma=\sqrt{\sigma}$.  
First, we calculate the  mean and covariance of the infinitesimal increments  of $\Y\sth$. 
For $i=1,\dots,d,$
    \begin{align*}
    \lim_{\epsilon\to 0 }
    \frac{1}{\epsilon} \E{ Y_i\sth (t+ \epsilon) -x_i\mid \Y\sth(t)=\x }
    &= \frac{1}{\sigma}\lim_{\epsilon'\to 0 }\frac{1}{\epsilon'} \E{ X_i\sth (t'+ \epsilon') -x_i\mid \X\sth(t')=\x }
    \\ &= \frac{1}{\sigma} \left[ \mu_i(\x) + s_i(\x) \right]
    = \omega_i  (\x) + \frac{1}{\sigma} h_i(\x)
    =: \omega_i \sth (\x)
    ,
    \end{align*}
where 
    $
    \omega_i (\x)=  \frac{x_1}{2} (\delta_{1i} -  x_i ) 
    $ 
is defined in \eqref{eq:ODE_deterministic_trajectory}, and 
$h_i(\x)$ defined in \eqref{eq:fct_h}
does not depend on $\sigma$. 
We denote by $\boldsymbol{\omega}\sth$ the vector function with components $\omega_1\sth,\dots,\omega_d\sth$.
Furthermore, for $i,j=1,\dots,d,$
    \begin{align*}
    & \lim_{\epsilon\to 0 }
    \frac{1}{\epsilon} 
    \E{
        \left( Y_i\sth (t+ \epsilon) -x_i\right)\left( Y_j\sth (t+ \epsilon) -x_j\right)
        \mid \Y\sth(t)=\x }
    \\ 
    &\quad  =\frac{1}{\sigma} \lim_{\epsilon'\to 0 }\frac{1}{\epsilon'} 
    \E{ 
        \left(X_i\sth (t'+ \epsilon') -x_i\right)\left(X_j\sth (t'+ \epsilon') -x_j\right)
        \mid \X\sth(t')=\x }
    = \frac{1}{\sigma} d_{ij}(\x) .
    \end{align*}
To derive the limit of $\U^{(\sigma)}$, we use a martingale approach based on rewriting 
    \begin{align}
    \nonumber
    \U^{(\sigma)}(t)= \ &
    b_\sigma[\Y^{(\sigma)}(0) - \xii(0) ] 
    +  b_\sigma \int_0^t \left[\boldsymbol{\omega}\sth(\Y\sth (s))-\boldsymbol{\omega} (\xii(s))\right]ds
    \\ \label{eq:martingale_approach}
    &  +  b_\sigma \left[ \Y\sth(t)-\Y\sth(0) - \int_0^t \boldsymbol{\omega}\sth(\Y\sth (s))ds \right],
    \end{align}
and on studying the limit as $\sigma\to\infty$ of each of the three terms on the right-hand side of \eqref{eq:martingale_approach} separately. To make this argument rigorous we aim to meet sufficient conditions for the limit of each term to exist, be sufficiently well behaved, and be possible to identify. A collection of such conditions is provided by Theorem 2.11 of \cite{kang2014}, which we summarise as follows (see also \cite{jenkins2015} for an application to the strong recombination limit).
\begin{enumerate}[(i),noitemsep]
\item $\Y\sth$ converges as $\sigma\to\infty$ to an identifiable, deterministic limit.
\item The process $\R\sth(t)$ satisfying 
    \begin{align*}
    d \R\sth(t)&=
    d\Y\sth(t)  - \boldsymbol{\omega}\sth(\Y\sth (t))dt 
    \end{align*} 
    is a local martingale, and the matrix of covariations processes $[\Y\sth]_t$ with entries $[Y\sth_i,Y\sth_j]_t$ satisfies $[\Y\sth]_t \xrightarrow[]{d} 0_{d\times d}$ as $\sigma\to\infty$.
\item Contributions of $\mathcal{O}(b_\sigma^{-1})$ to the error $\boldsymbol{\omega}\sth - \boldsymbol{\omega}$ can be identified: there exists a continuous function $G_0:\Delta \to \mathbb{R}^d$ such that
\begin{equation*}
\lim_{\sigma\to\infty}\sup_{\x\in\Delta}\left|b_\sigma(\boldsymbol{\omega}\sth(\x) - \boldsymbol{\omega}(\x)) - G_0(\x)\right| = 0. 
\end{equation*}
\item The martingale central limit theorem applies to $b_\sigma \R\sth$, which for a continuous process is guaranteed by the condition that there exists a continuous $G:\Delta \to \mathbb{R}^{d\times d}$ such that for each $t>0$,
\begin{equation*}
b_\sigma^2[\Y\sth]_t - \int_0^t G(\Y\sth(s)) ds \xrightarrow[]{d} 0_{d\times d}, \qquad \sigma\to\infty.
\end{equation*}
\end{enumerate}
The above conditions are a simplification of those of \cite{kang2014}, which allows for the diffusion to be influenced by processes evolving on faster timescales. We gain further simplification from the fact that each process $\Y\sth$ shares the same state space $\Delta$, and that this space is compact.

We check each requirement in turn.
\begin{enumerate}[(i),noitemsep]
\item This follows from (a).
\item We can write
    \begin{align*}
    \R\sth(t) &=
    \Y\sth(t)-\Y\sth(0) - \int_0^t \boldsymbol{\omega}\sth(\Y\sth (s))ds 
    =
    \frac{1}{\sqrt{\sigma}} \int_0^t D^{1/2}(\Y\sth (s)) d\W(s).
    \end{align*}
    In other words, $\R\sth$ is a martingale by construction. A direct calculation shows that
\begin{equation}
\label{eq:covariations}
[\Y\sth]_t = \frac{1}{\sigma}\int_0^tD(\Y\sth(s))ds \xrightarrow[]{d} 0_{d\times d}, \qquad \sigma \to\infty,
\end{equation}
since the entries of $D$ are bounded on $\Delta$.
\item From the definitions of $\boldsymbol{\omega}\sth$ and $\boldsymbol{\omega}$ we find
\[
b_\sigma(\omega_i\sth(\x) - \omega_i(\x)) = \frac{b_\sigma}{\sigma}h_i(\x), \qquad i=1,\dots,d.
\]
Hence assuming $\frac{b_\sigma}{\sigma} \to 0$ as $\sigma\to\infty$ and recalling that $h_i$ is bounded on $\Delta$, this condition can be satisfied with the choice $G_0 = 0$.
\item From \eqref{eq:covariations} we get
\[
b_\sigma^2[\Y\sth]_t = \frac{b_\sigma^2}{\sigma}\int_0^t D(\Y\sth(s))ds,
\]
and so this condition is verified with choice $G=D$ provided we take $b_\sigma = \sqrt{\sigma}$.
\end{enumerate}

Under these conditions, Theorem 2.11 of \cite{kang2014} applies and in particular $\U\sth \xrightarrow[]{d} \U$, with the form of $\U$ identified by taking the limit of each of the three terms in \eqref{eq:martingale_approach}.

With the choice $b_\sigma = \sqrt{\sigma}$ we find that the first term of \eqref{eq:martingale_approach} satisfies $\U\sth(0)= b_\sigma[\Y^{(\sigma)}(0) - \xii(0) ]\xrightarrow[]{d} \U(0)$ as $\sigma\to \infty$, by our assumption. For the second term of \eqref{eq:martingale_approach}, first obtain
    \begin{align*}
    & b_\sigma  \int_0^t \left[\omega_i\sth(\Y\sth (s))-\omega_i (\xii(s))\right]ds
    \\
    &  = b_\sigma  \int_0^t 
    \left[\omega_i(\Y\sth (s))-\omega_i (\xii(s))  +\frac{1}{\sigma} h_i(\Y\sth (s))\right]ds
    \\
    & =  -\frac{1}{2} 
    \int_0^t  \left\{
    \left[ Y_i\sth (s)- \delta_{1i} \right] b_\sigma \left[Y_1\sth (s)  - \xi_1(s)\right] 
    +
    \xi_1(s) b_\sigma \left[ Y_i\sth (s) - \xi_i(s)\right] \right\} ds
    \\
    &\quad +\frac{b_\sigma}{\sigma} \int_0^t h_i(\Y\sth (s))ds,
    \end{align*}  
which converges as $\sigma\to\infty$ to 
     \begin{align*}
    &  -\frac{1}{2}  
    \int_0^t \left\{
    \left[\xi_i (s)- \delta_{1i}\right] U_1(s)
    +  
    \xi_1(s) U_i(s) \right\} ds
    =
    \int_0^t 
    \langle \nabla \omega_i (\xi(s)), \U(s) \rangle ds
    =
    \int_0^t 
    [J\boldsymbol{\omega} (\xii(t))U(t)]_i  ds.
    \end{align*}
This limit corresponds to the drift of the SDE \eqref{eq:SDE_Gaussian}.
The third term of \eqref{eq:martingale_approach} can be rewritten as $b_\sigma \R\sth(t)$. Furthermore,
    \begin{align*}
    b_\sigma \R\sth(t)
    =
    \int_0^t D^{1/2}(\Y\sth (s)) d\W(s)
    \xrightarrow[]{ }  
    \int_0^t D^{1/2}(\xii (s)) d\W(s),
    \end{align*}
as $\sigma\to\infty$, which corresponds to the diffusion term of the SDE \eqref{eq:SDE_Gaussian}. \qed
		
\subsection{CBI fluctuations around the equilibrium point }

\label{sect:diffusion_CBI}
In this section we describe and derive the CBI limit (c) of Proposition \ref{thm:diffusion_limits} by analysing the convergence of infinitesimal generators. 
First, we point out that an analogous limit for WF models, albeit not phrased in terms of fluctuations around the equilibrium point, can be found in \cite{Ethier1988} and \cite{Nagylaki1990}, see also the Appendix for a sketch of the derivation.
This type of limit, unlike the Gaussian limit, seems to be less well known, 
because it is a consequence of the equilibrium point being on the boundary.

Let us now  describe in detail the limiting CBI process $\Z$.
Each of the independent components $Z_2,\dots,Z_d$ can be seen as a Cox--Ingersoll--Ross process or as a special type of continuous-state branching process with immigration (CIR or CBI process, resp.). 
While CIR processes arise from mathematical finance \cite{CIR1985}, they correspond to a special type of CBI processes with immigration ($\alpha$-stable with $\alpha=2$), which instead arise from earlier studies that are more related to this work. 
In fact,  the study of continuous-state branching (CB) processes  was initiated in \cite{Feller1951} with the observation that the large-population limit of a Galton--Watson branching process is a diffusion, later named Feller's branching diffusion, a special case of CB process ($\alpha$-stable with $\alpha=2$).  
CBI processes, which generalise CB processes, were introduced in \cite{KawazuWatanabe1971} as approximations of branching processes with immigration, see e.g.\ \cite{li2020} for a review. 
It is not surprising that our asymptotic derivation for the WF  diffusion leads to this type of process, not only because the limit should naturally correspond to the limit of the corresponding  WF model in the analogous asymptotic regime,  
but also because intuitively under large selection, the frequencies of unfit alleles tend to behave (on their smaller scale) like CB processes with the addition of 
immigration which corresponds to mutation from the fit to the unfit allele.
If $P_{1i}=0$ for a certain $i\neq 1$, there is no immigration, i.e.\ no mutation, from the fit allele to the unfit allele $i$, and $Z_i$ is precisely a (subcritical) Feller branching diffusion.

The transition distributions of $Z_i$, $i=2,\dots,d$, are known explicitly, that is,
    \begin{align*}
    Z_i(t+s)\mid Z_i(s) 
    \sim \frac{1-e^{-\frac{1}{2}t}}{2}
    \mathcal{X}
    \quad t,s \geq 0, 
    \end{align*}
where 
$\mathcal{X}$ is distributed as a non-central chi-squared distribution with 
$2\theta P_{1i}$
degrees of freedom and non-centrality parameter 
$\frac{2e^{-\frac{1}{2}t}}{1-e^{-\frac{1}{2}t}} Z_i(s)$. 
For the case $P_{1i}>0$, see e.g.\ \cite[eq.\ (17) and (18)]{CIR1985}; to compare, replace their $\kappa$ with $\frac{1}{2}$, $\theta$ with $\theta P_{1i}$ and $\sigma$ with 1. 
For the case $P_{1i}=0$, 
$\mathcal{X}$ can be considered as a non-central chi-squared random variable with zero degrees of freedom in the sense of \cite{siegel1979}. 
The distribution in this case was derived earlier \cite[Lemma 9]{FellerDiff1951} and corresponds to a Poisson-Gamma mixture with an atom at zero, as explained in \cite[eq.\ (7)]{burden2024}, to compare, simply replace $\alpha$ in \cite{burden2024} with $-\frac{1}{2}$. 
		
The genetic picture of the CBI process is as follows. Because selection is very strong, the population overwhelmingly comprises the fitter allele, and thus $\X$ will be close to $\e_1$. Occasionally, but only through mutation from the ubiquitous fit type to a rare unfit type, less fit alleles will be re-introduced to the population. As their frequencies are rare in a population almost wholly composed of allele 1, their frequencies do not `see' the upper limit of $1$ and instead (zoomed in on this spatial scale) evolve on $[0,\infty)$ according to the very well-known branching process approximation to a population model of fixed size. In the terminology of the CBI process, immigration here occurs at rate $\theta P_{1i}/2$ and the intrinsic growth parameter is $-1/2$. In this limit we do not see, for example, mutation between less fit types, or the relative selective strengths of the less fit types, play any role.
		
It is interesting to note that the boundary classification criteria (\cite{FellerDiff1951}; see also \cite[Example 8.2, p235--237]{IkedaWatanabe}) show that, if the upwards drift due to mutation from the fit type to the unfit type $i$ is not large enough, specifically $\theta P_{1i}<1$, then $Z_i$ can reach $0$ in finite time, which is thus an accessible endpoint. In particular, if $\theta P_{1i}=0$, then $Z_i$ is eventually absorbed at $0$. Otherwise, if $\theta P_{1i}\geq 1$, $Z_i$ always remains positive in finite time, i.e.\ $0$ is an inaccessible endpoint.  
Furthermore, exact path sampling of the processes $Z_i, i=2,\dots,d$, $P_{1i}>0$, can be done by means of the exact simulation algorithm constructed in \cite{makarov2010},
which is based on a scale and time transformation  reducing a CIR process to a squared Bessel process and on exact simulation of the latter.

\subsubsection*{Derivation of the CBI limit---proof of Proposition \ref{thm:diffusion_limits}(c) }

In order to study convergence of the sequence 
$ \Z\sth (t)= c_\sigma \left[ \X^{(\sigma)}( t/\sigma) - \e_1 \right] $, where $c_\sigma$ is a scaling factor yet to be determined, we derive its infinitesimal generator $\tilde{\mathcal{L}}\sth$ by a change of variables and a scaling of the generator $\mathcal{L}\sth$ of $\X\sth$. 
The next calculations also identify the appropriate scaling to be $c_\sigma=\sigma$, unlike the usual $\sigma^{1/2}$ one would expect from the central limit theorem. 
Divide $\mathcal{L}\sth$ by $\sigma$ to scale time, and 
let
$
\z= c_\sigma (\x-\e_{1}),
$   
to obtain
    \begin{align*}
    \tilde{\mathcal{L}}\sth=
    \frac{1}{\sigma}\left\{
    \frac{1}{2}
    \sum_{i,j=1}^d 
    d_{ij}\left(\e_1+c_\sigma^{-1}\z\right)     
    c_\sigma^2 \frac{\partial^2}{\partial z_i \partial z_j }
    + \sum_{i=1}^d \left[\mu_i\left(\e_1+c_\sigma^{-1}\z\right) + s_i\left(\e_1+c_\sigma^{-1}\z\right)  \right]
    c_\sigma\frac{\partial}{\partial z_i } 
    \right\}.
    \end{align*}
We now omit the first component, since $\sum_{i=1}^d x_i= 1$ and thus $\sum_{i=1}^d z_i= 0$, and calculate the diffusion terms
    \begin{align*}
    \frac{1}{2 \sigma}
    \sum_{i,j=2}^d 
    d_{ij}\left(\e_1+c_\sigma^{-1}\z\right)     
    c_\sigma^2 \frac{\partial^2}{\partial z_i \partial z_j }  
    =
    \frac{ c_\sigma }{\sigma} \frac{ 1}{2}
    \sum_{i=2}^d z_i  \frac{\partial^2}{\partial z_i^2 }
    -
    \frac{ 1}{\sigma} \frac{ 1}{2}
    \sum_{i,j=2}^d  z_i z_j \frac{\partial^2}{\partial z_i \partial z_j } ;
    \end{align*}
the mutation drift terms  
    \begin{align*}
    \frac{1}{\sigma}
    \sum_{i=2}^d 
    \mu_i\left(\e_1+c_\sigma^{-1}\z\right)
    c_\sigma\frac{\partial}{\partial z_i } 
    =
    \frac{c_\sigma}{\sigma} 
    \sum_{i=2}^d 
    \frac{\theta}{2} P_{1i} \frac{\partial}{\partial z_i } 
    + 
    \frac{1}{\sigma} 
    \sum_{i=2}^d  \frac{\theta}{2} \left[ \sum_{j=2}^d (P_{ji} -P_{1i}) z_j - z_i \right] 
    \frac{\partial}{\partial z_i } ;
    \end{align*}
and the selection drift terms
    \begin{align*}
    \frac{1}{\sigma}  \sum_{i=2}^d  s_i\left(\e_1+c_\sigma^{-1}\z\right) 
    c_\sigma\frac{\partial}{\partial z_i } 
    =&
    - \sum_{i=2}^d \frac{1}{2}  z_i \frac{\partial}{\partial z_i }
    + 
    \frac{1}{\sigma} 
    \sum_{i=2}^d \frac{1}{2} \sigma_i  z_i \frac{\partial}{\partial z_i }
    + \frac{1}{c_\sigma}
    \sum_{i=2}^d \frac{1}{2} \left(\sum_{j=2}^d z_j \right)z_i \frac{\partial}{\partial z_i }
    \\ &- \frac{1}{\sigma c_\sigma}
    \sum_{i=2}^d \frac{1}{2} \left(\sum_{j=2}^d \sigma_j z_j \right)z_i \frac{\partial}{\partial z_i } .
    \end{align*}
It is now clear that for convergence to a non-trivial limit as $\sigma\to\infty$ we should choose $c_\sigma=\sigma$. 
Therefore, continuing to omit the first component  
we find, 
    \begin{align*}
    \tilde{\mathcal{L}}\sth=
    \tilde{\mathcal{L}}_0 
    +\frac{1}{\sigma} \tilde{\mathcal{L}}_1 
    +\frac{1}{\sigma^2} \tilde{\mathcal{L}}_2,    
    \end{align*}
where
    \begin{align}
    \label{eq:generator_CBI}
    \nonumber
    &\tilde{\mathcal{L}}_0
    =
    \frac{ 1}{2}
    \sum_{i=2}^d z_i  \frac{\partial^2}{\partial z_i^2 }   
    +
    \sum_{i=2}^d 
    \frac{1}{2} (\theta P_{1i} -z_i) \frac{\partial}{\partial z_i };
    \\ \nonumber
    &\tilde{\mathcal{L}}_1 
    =
    -\frac{ 1}{2}
    \sum_{i,j=2}^d  z_i z_j \frac{\partial^2}{\partial z_i \partial z_j } 
    +     \sum_{i=2}^d  \left[ \frac{\theta}{2} \sum_{j=2}^d (P_{ji} -P_{1i}) z_j 
    +\frac{1}{2}z_i  \left(
    -\theta + \sigma_i +\sum_{j=2}^d z_j
    \right) 
    \right] 
    \frac{\partial}{\partial z_i };
    \\
    &\tilde{\mathcal{L}}_2
    =
    -      \sum_{i=2}^d \frac{1}{2} \left(\sum_{j=2}^d \sigma_j z_j \right)z_i \frac{\partial}{\partial z_i } .
    \end{align}   
The operator $\tilde{\mathcal{L}}_0$ is itself an infinitesimal generator and is precisely the generator of the limiting process $\Z$. 
Note that $\tilde{\mathcal{L}}_1 $ alone is not an infinitesimal generator because its second-order terms correspond to a negative semidefinite matrix.  

The calculations above prove uniform convergence of generators, i.e.\ 
for any twice continuously differentiable function $f$  with compact support
    \begin{align*}
    &\lim_{\sigma\to\infty} \sup_{z_2,\dots,z_d\in \mathbb{R}_{\geq 0} }
    \left| 
    \tilde{\mathcal{L}}\sth f(\z)
    -\tilde{\mathcal{L}}_0 f(\z)
    \right|
    =
    \lim_{\sigma\to\infty} \sup_{z_2,\dots,z_d\in \mathbb{R}_{\geq 0} }
    \left| 
    \frac{1}{\sigma} \tilde{\mathcal{L}}_1 f(\z)
    +\frac{1}{\sigma^2} \tilde{\mathcal{L}}_2 f(\z)
    \right|
    =0, 
    \end{align*}
which in turn implies weak convergence \cite[Prop. 3.3 and Thm 6.1, Ch 1, and Thm 2.5, Ch 4]{Ethier1986},  assuming the convergence of the initial distributions. \qed

Reading off from the form of $\tilde{\mathcal{L}}_0$ in the proof of Proposition \ref{thm:diffusion_limits}(c) we obtain the approximation
    \[
    \X\sth(t/\sigma)=\e_1 +\frac{1}{\sigma} \Z\sth (t/\sigma) \approx \e_1 +\frac{1}{\sigma} \Z(t),
    \]
where $\Z$ is the multi-type CBI process satisfying \eqref{eq:fluctuationsSDE}.

\section{Stationary distributions}
\label{sect:stationary}

The stationary distribution of the WF diffusion plays an important role in the expression of the sampling probability, which is discussed in Section \ref{sect:sampling_prob} and summarised by equation \eqref{eq:sampling_diffusion}. 

Under parent-independent mutation, i.e.\ $P_{ij}=Q_j,i,j=1,\dots,d,$ for $Q_j>0,j=1,\dots,d,$ it is well known  \cite{barbour2000,Wright1949} that the stationary density is the weighted Dirichlet density 
\begin{align}
	\label{eq:stationary_density}
	\pi(\x) &=
	\prod_{i=1}^d e^{\sigma_i x_i}  x_i^{\theta Q_i -1}
	\left(
	\int_\Delta
	\prod_{i=1}^d e^{\sigma_i x'_i}  {x'_i}^{\theta Q_i -1 }
	d \x' 
	\right)^{-1}.
\end{align} 
This is the  only case in which the stationary density is explicitly known. In general, the stationary distribution is unknown explicitly, but assuming that the mutation probability matrix $P$ is irreducible guarantees the existence of a unique stationary density with respect to Lebesgue measure \cite[Thm 3.1 and Thm 3.2]{shiga1981}. This is crucial in the remainder of this paper, thus $P$ is assumed to be irreducible from now on. Note that, in the PIM case, the irreducibility assumption corresponds to $Q_j>0,j=1,\dots,d$.

While in the PIM case the explicit density can be analysed asymptotically (see the end of this section), the idea for the general case is to use the limiting diffusions of the previous section to obtain an approximation of the stationary distribution. It turns out that only one of the two limiting diffusions is useful to this goal.

It is known for WF models that the Gaussian approximation can also be extended to stationarity in those regimes in which the equilibrium point of the deterministic trajectory is not on the boundary, see e.g.\ \cite{Nagylaki1986,Norman1972,Norman1974,Norman1975b}. 
However, in our study, the equilibrium point is on the boundary  and  the Gaussian approximation fails at stationarity.

\begin{remark}[degenerate Gaussian stationary distribution]
The Gaussian process $\U$, defined by \eqref{eq:SDE_Gaussian}, degenerates to the zero process as $t\to\infty$.
In fact,  
    $ [J\boldsymbol{\omega} (\xii(\infty))]_{ij}=
    \frac{1}{2}[\delta_{1i}\delta_{1j}-\delta_{1j}\xi_i(\infty)-\delta_{ij}\xi_1(\infty)]
    =
    - \frac{1}{2}  \delta_{ij}
    $, 
and  
    $ 
    d_{ij}(\xii(\infty))
    =\xi_i(\infty)(\delta_{ij}-\xi_j(\infty))
    =\delta_{1i}(\delta_{ij}-\delta_{1j})=0
    $,  
that is, 
    $$
    J\boldsymbol{\omega} (\xii(\infty))=- \frac{1}{2} I_d 
    \quad \text{ and } \quad  
    D(\xii(\infty))=0_{d\times d}. 
    $$
Therefore,   by \eqref{eq:mean_cov_PDE}, the mean and covariance functions of $\U(t)$, as $t\to\infty$, converge to 
    $$
    m(\infty)
    =0_d 
    \quad \text{ and } \quad
    C(\infty)= 0_{d\times d}.
    $$
	
\end{remark}

\noindent 
The degenerate Gaussian stationary distribution trivially tells us that $\X\sth(\infty) \approx \xii(\infty) +\frac{1}{\sqrt{\sigma}}\U(\infty)= \e_1$, which indicates that the $\sqrt{\sigma}-$scaling  is not appropriate to obtain a non-trivial distributional limit at stationarity.
On the contrary, the CBI process $\Z$ does not become degenerate as $t\to\infty$ and thus provides an alternative approximation to replace the classical Gaussian approximation.

\begin{remark}[Gamma stationary distribution]
If $P_{1i}>0, i=2,\dots,d,$ 
the stationary distribution of the CBI limit $\Z$, defined by \eqref{eq:SDE_CBI},  
has independent components, for $i=2,\dots, d$, with 
    $$
    Z_i(\infty)\sim \text{Gamma}(\theta P_{1i},1),
    $$
that is a Gamma distribution with shape parameter $\theta P_{1i}$ and rate parameter $1$, see e.g.\ \cite{CIR1985}.
Furthermore, 
$$-Z_1(\infty)\sim \text{Gamma}(\theta(1-P_{11}),1).$$
The Gamma stationary distribution provides the non-trivial approximation  
$ \X^{(\sigma)}(\infty) \approx 
e_1 +\frac{1}{\sigma} \Z(\infty)$
which we  use to guide the scaling of the sampling probability in the next section. 
If instead, $P_{1i}=0$ for a certain $i\neq 1$, then $Z_i(\infty)=0$ (see e.g.\ \cite[Sect. 2.3]{burden2024}), as mentioned in section \ref{sect:diffusion_CBI}. 
\end{remark}

Finally, in the PIM case, 
knowing now the correct scaling, it is straightforward to check directly that the explicitly-known density function of 
$\sigma (\X^{(\sigma)}(\infty) -\e_1) $
is 
(from \eqref{eq:stationary_density}, changing variables $\z= \sigma (\x-\e_1 )$, with Jacobian determinant $\sigma^{-(d-1)}$, and recalling that $\sum_{i=1}^d z_i = 0$) ,
    \begin{align*}
    &\sigma^{-(d-1)} \pi(\e_1+ \sigma^{-1}\z)  \\
    &\quad = \frac{
        \sigma^{-(d-1)}
        e^{z_1 + \sigma}  ( \sigma^{-1} z_1 +1 )^{\theta Q_1 -1}
        \prod_{i=2}^d e^{\sigma_i \sigma^{-1} z_i}  ( \sigma^{-1} z_i)^{\theta Q_i -1}
    }{
        \int_\Delta
        \sigma^{-(d-1)}
        e^{z'_1 + \sigma}  ( \sigma^{-1} z'_1 +1 )^{\theta Q_1 -1}
        \prod_{i=2}^d e^{\sigma_i \sigma^{-1} z'_i}  ( \sigma^{-1} z'_i)^{\theta Q_i -1}
        d z'_2\dots dz'_d 
    }
    \\ 
    &\quad=
    \frac{
        ( \sigma^{-1} z_1 +1 )^{\theta Q_1 -1}
        \prod_{i=2}^d e^{\sigma_i \sigma^{-1} z_i}  \prod_{i=2}^d e^{-z_i }  z_i^{\theta Q_i -1}
    }{
        \int_\Delta
        ( \sigma^{-1} z'_1 +1 )^{\theta Q_1 -1}
        \prod_{i=2}^d e^{\sigma_i \sigma^{-1} z'_i} \prod_{i=2}^d e^{-z'_i }   (z_i')^{\theta Q_i -1}
        d z'_2\dots dz'_d  
    }\\
    &\quad \to \prod_{i=2}^d 
    \frac{1}{\Gamma(\theta Q_i)}
    e^{- z_i}  z_i^{\theta Q_i -1}, \quad  \sigma\to\infty,
    \end{align*}
the density of a collection of independent $\text{Gamma}(\theta Q_i,1)$ distributed random variables, $i=2,\dots, d$, as expected. 

\section{Sampling probabilities}
\label{sect:sampling_prob}

A sampling configuration is denoted by a vector $\n \in \mathbb{N}^d \setminus \{\boldsymbol{0}\}$, with $n_i$ being the number of individuals of type $i$ in the sample.  
The probability of obtaining the unordered configuration $\n$ is denoted by $p^{(\sigma)}(\n) $,
and the number of individuals in the configuration $\n$ is denoted by $\norm{\n}:=n_1+\dots+n_d$.  
Because of exchangeability, each of the  $\binom{\norm{\n}}{\n}$ ordered configurations that is consistent with a certain unordered configuration $\n$ is obtained with the same probability $q^{(\sigma)}(\n) $, and $p^{(\sigma)}(\n)= \binom{\norm{\n}}{\n} q^{(\sigma)}(\n) $. 
In a  population at stationarity, the probability $q^{(\sigma)}(\n) $, which we call  sampling probability, can be written as
\begin{align}
	\label{eq:sampling_diffusion}
	q^{(\sigma)}(\n)=\E{F(\X^{(\sigma)}(\infty),\n )},
	\quad \quad 
	\text{with }
	F(\x,\n) := \prod_{i=1}^d x_i^{n_i } ,
\end{align}
where  $\X^{(\sigma)}(\infty)$ is distributed according to the stationary distribution of the WF diffusion $\{ \X^{(\sigma)}(t)\}_{t\geq 0}$.
While the natural domain of $q\sth$ remains  $\mathbb{N}^d \setminus \{\boldsymbol{0}\}$, we adopt the standard convention of defining $q\sth(\boldsymbol{0})=1$ and  of defining probabilities of configurations with negative components as null, i.e.\ $q\sth(\n)=0$, for $\n\notin \mathbb{N}^d$.  These definitions allow for more compact expressions 
by implicitly restricting the recursion formulas below and in Section \ref{sect:sampling_prob_full_expansion}.

Let us assume for now $\sigma_2,\dots,\sigma_d=0$. This assumption is justified and discussed in details in Remark \ref{rmk:assumption_selection}. It is made here and then again in Sections \ref{sect:sampling_prob_full_expansion}, \ref{sect:genealogy}, and \ref{sect:duality}.
An equivalent way of describing $q^{(\sigma)}(\n)$, for $\n\in\mathbb{N}^d\setminus \{\boldsymbol{0}\} $, is through the recursion formula 
    \begin{align}
    \nonumber
    & \{\norm{\n} (\norm{\n}-1+\theta ) 
    +
    (\norm{\n}-n_1 )\sigma \} q\sth(\n)
    \\  \label{eq:sampling_recursion}
    & \quad = 
    \sum_{i=1}^d
    n_i(n_i-1)q\sth(\n -\e_i)
    +
    \sum_{i,j=1}^d
    n_i \theta P_{ji}q\sth(\n -\e_i+\e_j)
    + \sum_{i=2}^d
    \norm{\n} \sigma q\sth(\n  +\e_i),
    \end{align}
which was originally derived for the two-allele case in  \cite{Krone1997} and for the multi-allele case in \cite{fearnhead2002,fearnhead2003}.
This recursion expresses the probability $q^{(\sigma)}(\n)$ by conditioning on the possible most recent event in the ancestral process that led to configuration $\n$: coalescences of type $i=1,\dots,d$ (represented by $\n-\e_i$), mutations from type $j=1,\dots,d$, to type $i=1,\dots,d$ (represented by $\n-\e_i+\e_j$), and selective events between an incoming lineage of type $i=2,\dots,d$ and a parental continuing lineage (represented by $\n+\e_i$). 
Selective events between a continuing lineage of type $i=1,\dots,d$ and a parental fit incoming lineage can be omitted because they do not lead to a change in configuration. Further details on this point, as well as on the underlying ancestral process, its rates, and the meaning of `incoming' and `continuing' branches, are postponed to Section \ref{sect:genealogy}. 
The coefficient on the left-hand side of \eqref{eq:sampling_recursion} is proportional to the total rate of events. In particular, 
the total coalescence rate is $\frac{1}{2}\norm{\n} (\norm{\n}-1) $, 
the total mutation rate is $\frac{1}{2}\theta \norm{\n} $, 
and the total selection rate is $\frac{1}{2}\sigma \norm{\n}$ to which the rate $\frac{1}{2}\sigma n_1$  of omitted selective events can be subtracted. 

A useful expression to be paired with \eqref{eq:sampling_recursion} is the consistency condition
\begin{align}
	\label{eq:to_reduce_recursion}
	\sum_{i=1}^d
	q\sth(\n +\e_i) = q\sth (\n),
\end{align}
for $\n\in \mathbb{N}^d$. This holds true because $q\sth(\n +\e_i)= \pi(i\mid \n)q\sth(\n)$, with $\pi(i\mid \n)$ being the probability of sampling type $i$ after having sampled the configuration $\n$. 
In the  PIM case, 
expression \eqref{eq:to_reduce_recursion} allows us to simplify the recursion \eqref{eq:sampling_recursion}, for $\n\in \mathbb{N}^d\setminus\{\boldsymbol{0}\}$,  to 
    \begin{align*}
    \nonumber
    & \{\norm{\n} (\norm{\n}-1+\theta ) 
    +
    (\norm{\n}-n_1 )\sigma \} q\sth(\n)
    \\ & \quad =
    \sum_{i=1}^d
    n_i(n_i-1+\theta Q_i)q\sth(\n -\e_i) 
    + \norm{\n} \sigma \sum_{i=2}^d
    q\sth(\n  +\e_i).
    \end{align*}
In general, except for the PIM case, the sampling probabilities do not have an explicit expression. In fact,   \eqref{eq:sampling_diffusion} is implicit because of the   unknown stationary density and  the recursion \eqref{eq:sampling_recursion} is not usable in practice \cite{stephens2003}. 
The lack of an explicit expression for the sampling probability $q^{(\sigma)}(\n) $,
defined by \eqref{eq:sampling_diffusion} or equivalently \eqref{eq:sampling_recursion}, constitutes the main challenge for its  asymptotic analysis   which is the goal of this section. In particular, in Section \ref{sect:sampling_prob_full_expansion} we derive a full asymptotic expansion for $q^{(\sigma)}(\n) $ in inverse powers of $\sigma$, 
being guided for the choice of the correct scaling by the Gamma approximation presented in Section \ref{sect:sampling_prob_gamma}. First, in Section \ref{sect:sampling_prob_2alleles} we treat explicitly the special case of two alleles.

\subsection{Two alleles and parent independent mutation}
\label{sect:sampling_prob_2alleles}

In the PIM case, knowing that the stationary density  of the WF diffusion is the weighted Dirichlet density in \eqref{eq:stationary_density}, the sampling probability can be written more explicitly as
\begin{equation}
	\label{eq:PIM_sampling}
	q^{(\sigma)}(\n)
	= 
	\frac{
		\int_\Delta
		\prod_{i=1}^d e^{\sigma_i x_i}  x_i^{\theta Q_i -1 + n_i}
		d \x 
	}{
		\int_\Delta
		\prod_{i=1}^d e^{\sigma_i x_i}  x_i^{\theta Q_i -1 }
		d \x 
	} .
\end{equation}
In the two allele case, i.e.\ $d=2$, the mutation model can always be written in a way to satisfy the parent-independence assumption; see e.g.\ \cite{griffiths2008} for a detailed explanation. 
Thus, similarly to the calculations for ratios of analogous sampling probabilities in \cite{wakeley2008}, we obtain
\begin{align}
	q^{(\sigma)}(\n)
	& =
	\frac{
		\int_0^1
		e^{(\sigma_2-\sigma_1) x_2} (1-x_2)^{\theta Q_1-1+ n_1} x_2^{ \theta Q_2-1+ n_2} 
		d x_2 
	}{
		\int_0^1
		e^{(\sigma_2-\sigma_1) x_2} (1-x_2)^{\theta Q_1-1} x_2^{ \theta Q_2-1} 
		d x_2 
	}
	\nonumber\\& =
	\frac{\Gamma(n_1+ \theta Q_1)}{\Gamma( \theta Q_1)}
	\frac{\Gamma(n_2+ \theta Q_2)}{\Gamma( \theta Q_2)}
	\frac{\Gamma(\theta)}{\Gamma( \norm{\n}+\theta )}
	\frac{_1F_1[n_2+ \theta Q_2, \norm{\n}+\theta, \sigma_2-\sigma_1 ]}{_1 F_1[\theta Q_2, \theta, \sigma_2-\sigma_1 ]}, \label{eq:1F1solution}
\end{align}
where $_1F_1$ is Kummer's function, i.e.\ the confluent hypergeometric function with the integral representation
\begin{align*}
	_1F_1[a,b,z]&=
	\frac{\Gamma(b)}{\Gamma(a)\Gamma(b-a)}\int_0^1 e^{zy}y^{a-1}(1-y)^{b-a-1}dy,  
\end{align*}
for $a,b\geq 0$. It is known that, as  $z\to - \infty$, 
\begin{align*}
	_1F_1[a,b,z]=
	\frac{\Gamma(b)}{\Gamma(b-a)} 
	\frac{1}{(-z)^a}
	\sum_{k=0}^K \frac{1}{k!} (a)_k(a-b+1)_k 
	\frac{1}{(-z)^k}
	+ \mathcal{O}\left( \frac{1}{(-z)^{a+K+1}}\right) ,
\end{align*}
with $(a)_k=a(a+1)\cdots (a+k-1)=\frac{\Gamma(a+k)}{\Gamma(a)}$ being the rising factorial \cite[Ch. 6, Sect. 13]{bateman1953}.
Therefore,
\begin{align*}
	q^{(\sigma)}(\n)
	& =
	\frac{\Gamma(n_2+ \theta Q_2)}{\Gamma( \theta Q_2)}
	\frac{1}{\sigma_1^{n_2}}
	\frac{
		\sum_{k=0}^K \frac{1}{k!} (n_2+\theta Q_2)_k(1-\theta Q_1-n_1)_k 
		\frac{1}{\sigma_1^k}
		+ \mathcal{O}\left( \frac{1}{\sigma_1^{K+1}}\right) 
	}{
		\sum_{k=0}^K \frac{1}{k!} (\theta Q_2)_k(1-\theta Q_1)_k 
		\frac{1}{\sigma_1^k}
		+ \mathcal{O}\left( \frac{1}{\sigma_1^{K+1}}\right) 
	}
	\\& =
	\frac{1}{\sigma_1^{n_2}}
	\left\{\sum_{k=0}^K
	\tilde{q}_k(\n)  \frac{1}{\sigma_1^k}
	+ \mathcal{O}\left( \frac{1}{\sigma_1^{K+1}}\right)
	\right\} ,
\end{align*}
where the coefficients $\tilde{q}_k(\n)$ are given by the following. Letting
$ r_k(\n)=\frac{1}{k!} (n_2+\theta Q_2)_k(1-\theta Q_1-n_1)_k$, and 
$ s_k=\frac{1}{k!} (\theta Q_2)_k(1-\theta Q_1)_k$,
\begin{align}
	\tilde{q}_0(\n)
	&= \frac{\Gamma(n_2+ \theta Q_2)}{\Gamma( \theta Q_2)}
	,\notag\\
	\tilde{q}_1(\n)
	&= \tilde{q}_0(\n)
	[r_1(\n)-s_1],
	\notag\\
	\tilde{q}_2(\n)
	&=\tilde{q}_0(\n)
	[r_2(\n)-s_2-s_1[r_1(\n)-s_1]]
	=\tilde{q}_0(\n)
	[r_2(\n)-s_2] -s_1\tilde{q}_1(\n)
	,\notag\\
	\vdots
	\notag\\
	\tilde{q}_k(\n)
	&=\tilde{q}_0(\n)
	\left[r_k(\n)-s_k\right] -
	\sum_{ m=1 }^{k-1} s_{k-m} \tilde{q}_{m}(\n).
	\label{eq:q-expansion-2allele}
\end{align}
We omit the straightforward but lengthy derivation of the coefficients above, and instead refer to Proposition \ref{thm:coefficient_PIM} in Section \ref{sect:sampling_prob_full_expansion} for a proof of a generalised formula based on a different approach. 
The expression for $ \tilde{q}_0(\n)$ in \eqref{eq:q-expansion-2allele} agrees with eq.\ (9b) of \cite{fan2024} who found the leading term ($k=0$).

In the multi-allele PIM case, an asymptotic analysis of the sampling probabilities based on the ratio of integrals appearing in \eqref{eq:PIM_sampling} seems to be possible, albeit  more cumbersome, whereas in the general case of possibly parent-dependent mutations, no explicit expression for the stationary density, and consequently for the sampling probabilities, is available. Therefore, below we take a different approach to the asymptotic analysis that does not rely on expression \eqref{eq:PIM_sampling} and thus works also for the general case of possibly parent-dependent mutations.

\subsection{Gamma approximation}
\label{sect:sampling_prob_gamma}


In this section, we present an approach for the asymptotic analysis of the sampling probability that does not rely on explicit expressions, which are not available in the general multi-allele case of parent-dependent mutation,
and is based on a \emph{Gamma approximation}, rather than the classical \emph{Gaussian approximation} whose stationary distribution is degenerate as shown in Section \ref{sect:stationary}. This approach describes the leading term (and in some cases, an additional non-null term) of the asymptotic expansion of the sampling probability and thus it indicates the correct scaling for the full asymptotic expansion, to be studied in the next section. 

We know from Section \ref{sect:stationary} that  $\X^{(\sigma)}(\infty)$ can be  approximated by $\e_1+\frac{1}{\sigma}\Z(\infty) $.  
Since we are interested in approximating the sampling probability $q^{(\sigma)}(\n)$ under 
$\X^{(\sigma)}(\infty)$,  we compute a different  sampling probability,  under $\e_1+\frac{1}{\sigma}\Z(\infty) $, as stated in the following. 

\begin{proposition}
	\label{thm:gamma_sampling}
	Let $q_\Gamma^{(\sigma)}(\n)$ be the sampling probability defined by  
	\begin{align*}
		q_\Gamma^{(\sigma)}(\n)
		&=
		\E{F\left(\e_1+\frac{1}{\sigma}\Z(\infty),\n \right)}.
	\end{align*}
	Then,
	\begin{align*}
		q_\Gamma^{(\sigma)}(\n)
		= 
		\frac{1}{\sigma^{\norm{\n}-n_1}}
		\sum_{k=0}^{n_1}
		\frac{1}{\sigma^{k}}
		\gamma_k(\n),
	\end{align*}
	where
	\begin{align*}
		\gamma_k(\n)=
		(-1)^{k} \sum_{k_2+\dots + k_d=k}
		\binom{n_1}{n_1-k \ k_2 \dots k_d}
		\prod_{i=2}^d \frac{\Gamma(\theta P_{1i}+n_i+k_i)}{\Gamma(\theta P_{1i})}.
	\end{align*}	
\end{proposition}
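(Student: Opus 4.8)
The plan is to carry out a direct moment computation, exploiting the explicit Gamma form of the stationary law of $\Z$ established in Section \ref{sect:stationary}. First I would substitute the coordinates of $\e_1 + \frac{1}{\sigma}\Z(\infty)$ into $F$. Recalling that $Z_1(\infty) = -\sum_{i=2}^d Z_i(\infty)$ and that the $i$-th coordinate equals $\frac{1}{\sigma}Z_i(\infty)$ for $i\geq 2$, this gives
\[
F\left(\e_1+\tfrac{1}{\sigma}\Z(\infty),\n\right) = \frac{1}{\sigma^{\norm{\n}-n_1}}\left(1-\frac{1}{\sigma}\sum_{i=2}^d Z_i(\infty)\right)^{n_1}\prod_{i=2}^d Z_i(\infty)^{n_i},
\]
where the prefactor $\sigma^{-(\norm{\n}-n_1)}$ collects the $d-1$ factors $\sigma^{-n_i}$, $i\geq 2$. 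This already exposes the global scale $\sigma^{-(\norm{\n}-n_1)}$ predicted by the statement, and reduces the problem to expanding and integrating the remaining random factor.

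Next I would expand the bracket. Applying the binomial theorem to $\bigl(1-\frac{1}{\sigma}\sum_{i\geq 2}Z_i(\infty)\bigr)^{n_1}$ produces a finite sum over $k=0,\dots,n_1$ with weights $\binom{n_1}{k}(-1)^k\sigma^{-k}$, matching exactly the powers $\sigma^{-k}$ and the alternating sign $(-1)^k$ appearing in $\gamma_k(\n)$. The resulting $k$-th power $\bigl(\sum_{i\geq 2}Z_i(\infty)\bigr)^k$ I would then expand by the multinomial theorem over compositions $k_2+\dots+k_d=k$, yielding coefficients $\binom{k}{k_2\dots k_d}$ multiplying the monomials $\prod_{i\geq 2}Z_i(\infty)^{k_i}$.

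The final step is to take the expectation. Because the components $Z_2(\infty),\dots,Z_d(\infty)$ are independent (Section \ref{sect:stationary}), the expectation factorises over $i$, and each factor is a moment of a $\text{Gamma}(\theta P_{1i},1)$ variable; using $\E{X^m}=\Gamma(\alpha+m)/\Gamma(\alpha)$ for $X\sim\text{Gamma}(\alpha,1)$ with $m=n_i+k_i$ yields precisely the products $\prod_{i=2}^d \Gamma(\theta P_{1i}+n_i+k_i)/\Gamma(\theta P_{1i})$, and collecting the $\sigma$-powers reproduces the claimed double sum. The only point requiring real care is the combinatorial identity $\binom{n_1}{k}\binom{k}{k_2\dots k_d}=\binom{n_1}{n_1-k\ k_2\dots k_d}$, which merges the binomial coefficient from the first expansion with the multinomial coefficient from the second into the single multinomial coefficient in $\gamma_k(\n)$; it follows at once from $\frac{n_1!}{k!(n_1-k)!}\cdot\frac{k!}{k_2!\cdots k_d!}=\frac{n_1!}{(n_1-k)!\,k_2!\cdots k_d!}$. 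I do not anticipate a genuine obstacle: the argument is an exact finite computation with no limiting or convergence subtlety, so the only (modest) difficulty is the bookkeeping of the two nested expansions together with this coefficient identity.
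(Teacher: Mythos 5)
Your proposal is correct and follows essentially the same route as the paper's proof: expand $F\left(\e_1+\frac{1}{\sigma}\Z(\infty),\n\right)$ using $Z_1(\infty)=-\sum_{i=2}^d Z_i(\infty)$, expand the power of the bracket combinatorially, and evaluate via independence and Gamma moments. The only cosmetic difference is that the paper performs a single multinomial expansion over $k_1+\dots+k_d=n_1$ (with the constant $1$ carrying exponent $k_1$) and then reindexes $k=n_1-k_1$, whereas you split it into a binomial expansion followed by a multinomial one and merge the coefficients via the identity $\binom{n_1}{k}\binom{k}{k_2\,\dots\,k_d}=\binom{n_1}{n_1-k\ k_2\,\dots\,k_d}$ — the same computation in two steps instead of one.
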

If $P_{1i}=0$, for some $i=2,\dots,d,$ then $q_\Gamma \sth (\n)=0$ for those $\n$ with $n_i>0$, since $Z_{i}(\infty)=0$. Note that, while this case is not of central interest, it is still covered by the formulas of the theorem above by simply interpreting a $\text{Gamma}(0,1) $ distribution as point mass at zero, and by adopting the natural convention that $\Gamma(z)/\Gamma(0)=0$ for $z>0$, and $\Gamma(0)/\Gamma(0)=1$.

\begin{proof}
By using the multinomial theorem, the independence of $Z_2,(\infty),\dots, Z_d(\infty)$, the known expression for moments of Gamma random variables, and by some rearranging, we obtain 
    \begin{align*}
    q_\Gamma^{(\sigma)}(\n)
    &
    = 
    \E{
        \left(1+\frac{1}{\sigma}Z_1(\infty)\right)^{n_1}
        \prod_{i=2}^d \left(\frac{1}{\sigma}Z_i(\infty)\right)^{n_i}
    }
    \\ & = 
    \E{
        \left(1-\frac{1}{\sigma}\sum_{i=2}^d Z_i(\infty)\right)^{n_1}
        \prod_{i=2}^d \left(\frac{1}{\sigma}Z_i(\infty)\right)^{n_i}
    }
    \\ & = 
    \E{
        \sum_{k_1+\dots +k_d=n_1} \binom{n_1}{k_1 \dots k_d}
        \prod_{i=2}^d \left(-\frac{1}{\sigma}Z_i(\infty)\right)^{k_i}
        \prod_{i=2}^d \left(\frac{1}{\sigma}Z_i(\infty)\right)^{n_i}
    }
    \\ & = 
    \sum_{k_1+\dots +k_d=n_1} 
    \frac{1}{\sigma^{\norm{\n}-k_1}}
    (-1)^{n_1-k_1} \binom{n_1}{k_1 \dots k_d}
    \E{
        \prod_{i=2}^d Z_i(\infty)^{n_i+k_i}
    }
    \\ & = 
    \sum_{k_1=0}^{n_1}
    \frac{1}{\sigma^{\norm{\n}-k_1}}
    (-1)^{n_1-k_1} \sum_{k_2+\dots +k_d=n_1-k_1}
    \binom{n_1}{k_1 \dots k_d}
    \prod_{i=2}^d \frac{\Gamma(\theta P_{1i}+n_i+k_i)}{\Gamma(\theta P_{1i})}
    \\ & = 
    \frac{1}{\sigma^{\norm{\n}-n_1}}
    \sum_{k=0}^{n_1}
    \frac{1}{\sigma^{k}}
    (-1)^{k} \sum_{k_2+\dots +k_d=k}
    \binom{n_1}{n_1-k \ k_2 \dots k_d}
    \prod_{i=2}^d \frac{\Gamma(\theta P_{1i}+n_i+k_i)}{\Gamma(\theta P_{1i})}.
    \end{align*}
\end{proof}

The Gamma approximation $q_\Gamma^{(\sigma)}(\n)$, being based on the two-term approximation $\e_1+\frac{1}{\sigma}\Z(\infty)$,  captures the first two terms of the asymptotic expansion of the sampling probability
$q\sth (\n)$, as confirmed by the results of the next section. However, 
the first of these terms is equal to zero when $\norm{\n}\neq n_1$, that is, when there is at least one copy of an unfit allele in the sample, since the term of lowest possible order $\mathcal{O}(1/\sigma^{0})$ in Proposition 4.1 vanishes. 
More precisely, the leading term of Proposition \ref{thm:gamma_sampling} is (for $k=n_1-k_1=k_2=\dots=k_d=0$) 
$$
\frac{1}{\sigma^{\norm{\n}-n_1}} \gamma_0(\n),
\quad \text{ with } \quad
\gamma_0(\n) =
\prod_{i=2}^d \frac{\Gamma(\theta P_{1i}+n_i)}{\Gamma(\theta P_{1i})}.
$$
This corresponds to expression \eqref{eq:zero_coefficient} derived in the next section. 
If $\norm{\n}\neq n_1$,  the leading term above
constitutes all of the `useful' information given by the Gamma approximation to the sampling probability;
that is, higher order terms beyond this leading order in the expansion for $q_\Gamma^{(\sigma)}(\n)$ do not agree with those of $q\sth (\n)$, as we show later.
On the other hand, if $\norm{\n} = n_1$, i.e.\ $\n=n_1\e_1$, then the Gamma approximation agrees with the expansion for $q^{(\sigma)}(\n)$ to two orders with non-zero coefficients---the leading term above, which is equal to $1$ in this case,  followed by  (for $k=1$ in Proposition \ref{thm:gamma_sampling})
$$
\frac{1}{\sigma^{\norm{\n}-n_1+1}} \gamma_1(n_1\e_1),
$$
with
\begin{align*}
	\gamma_1(\n) =  
	- \sum_{j=2}^d n_1 
	\frac{\Gamma(\theta P_{1j}+n_j+1)}{\Gamma(\theta P_{1j}+n_j)} 
	\prod_{i=2}^d \frac{\Gamma(\theta P_{1i}+n_i)}{\Gamma(\theta P_{1i})}
	=
	- \gamma_0(\n) n_1 [\theta (1-P_{11}) + \norm{\n}-n_1].
\end{align*}
As we show later, $\gamma_1(\n)$ agrees with the corresponding term in the asymptotic expansion of $q^{(\sigma)}(\n)$ when $\n=n_1\e_1$ (see expression \eqref{eq:one_coefficient_allfit} in the next section), though not in general.

To summarise, the leading term  $\gamma_0(\n)$ of the Gamma approximation always coincides with the leading term of the asymptotic expansion of the sampling probability $q\sth(\n)$, while the term  $\gamma_1(\n)$  coincides with the second term of the asymptotic expansion only when $\n=n_1\e_1$. 
This will become more clear in the next section, and it can also be directly verified from the explicit expressions in \eqref{eq:q-expansion-2allele}, for the case of two alleles and parent-independent mutation. 

\subsection{Full asymptotic expansion for general mutation models}
\label{sect:sampling_prob_full_expansion}

We denote by $\tilde{q}_k(\n)$, $k=0,1,2,\dots$, the coefficients of the following asymptotic expansion of the sampling probability 
\begin{align}
	\label{eq:asymptotic_expansion}
	q^{(\sigma)}(\n)
	=
	\frac{1}{\sigma^{\norm{\n}-n_1}}
	\left\{\sum_{k=0}^K
	\tilde{q}_k(\n)  \frac{1}{\sigma^k}
	+ \mathcal{O}\left( \frac{1}{\sigma^{K+1}}\right)
	\right\} 
\end{align}
for any $K\geq 0 $. This key equation is an asymptotic, rather than a convergent, series expression for $q\sth(\n)$; see Remark \ref{rmk:power-series}. Like $q\sth$ itself, the coefficients of \eqref{eq:asymptotic_expansion} are defined for convenience outside the natural domain $\mathbb{N}^d\setminus\{\boldsymbol{0}\}$. It follows from the definition of $q\sth$ that $\tilde{q}_k(\boldsymbol{0})=\ind{k=0}$, and, for $k=0,1,2,\dots, \n\notin\mathbb{N}^d,$ $\tilde{q}_k(\n)=0$.
In this section, we focus on characterising the coefficients of the expansion above by means of the recursion formula \eqref{eq:sampling_recursion} for a general mutation model. In Section \ref{sect:genealogy}, sampling probabilities are viewed in the framework of the ASG, and we use the expression above to analyse the rates of the ASG asymptotically.

\begin{lemma}
\label{thm:tricks_recursion}
Let $\tilde{q}_k(\n), k=0,1,2,\dots$, be defined by \eqref{eq:asymptotic_expansion}. Then, for $\n\in \mathbb{N}^d $, 
    \begin{align*}
    \tilde{q}_k(\n+\e_1)  
    +
    \ind{k\geq 1}
    \sum_{j=2}^d
    \tilde{q}_{k-1}(\n+\e_j) 
    =
    \tilde{q}_k(\n).
    \end{align*}
	
\begin{proof}
Inserting the asymptotic expansion \eqref{eq:asymptotic_expansion} in \eqref{eq:to_reduce_recursion} yields
    \begin{align*}
    & \frac{1}{\sigma^{\norm{\n}-n_1}} 
    \left\{\sum_{k=0}^K
    \tilde{q}_k(\n+\e_1)  \frac{1}{\sigma^k}
    + \mathcal{O}\left( \frac{1}{\sigma^{K+1}}\right)
    \right\} 
    \\
     &\quad +
    \sum_{j=2}^d
    \frac{1}{\sigma^{\norm{\n}-n_1+1}}
    \left\{\sum_{k=0}^K
    \tilde{q}_k(\n+\e_j)  \frac{1}{\sigma^k}
    + \mathcal{O}\left( \frac{1}{\sigma^{K+1}}\right)
    \right\}  
    \\ & \quad \quad = 
    \frac{1}{\sigma^{\norm{\n}-n_1}}
    \left\{\sum_{k=0}^K
    \tilde{q}_k(\n)  \frac{1}{\sigma^k}
    + \mathcal{O}\left( \frac{1}{\sigma^{K+1}}\right)
    \right\} .
    \end{align*}
Multiply both sides of the equation above by $\sigma^{\norm{\n}-n_1}$ and adjust the indices of the sums so that each sum contains $\frac{1}{\sigma^{k}}$, so that 
    \begin{align*}
    \sum_{k=0}^K
    \frac{1}{\sigma^k}
    \tilde{q}_k(\n+\e_1) 
    +
    \sum_{k=1}^{K}
    \frac{1}{\sigma^k}
    \sum_{j=2}^d
    \tilde{q}_{k-1}(\n+\e_j) 
    =
    \sum_{k=0}^K
    \frac{1}{\sigma^k}
    \tilde{q}_k(\n) 
    + \mathcal{O}\left( \frac{1}{\sigma^{K+1}}\right).
    \end{align*}
The equality above must hold for the coefficients of each order, thus concluding the proof. Note that the argument is also valid for $\n=\boldsymbol{0}$ since equation \eqref{eq:to_reduce_recursion} holds in this case. 
\end{proof}
\end{lemma}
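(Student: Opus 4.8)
The plan is to feed the asymptotic expansion \eqref{eq:asymptotic_expansion} into the consistency condition \eqref{eq:to_reduce_recursion} and then match coefficients order by order in $\sigma^{-1}$. The consistency condition is the natural relation to exploit here, because it already relates $q\sth(\n)$ to the $d$ quantities $q\sth(\n+\e_i)$, which is exactly the structure of the claimed identity once it is resolved into powers of $\sigma$.

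The crucial observation, which I would make first, is that the prefactor $\sigma^{-(\norm{\n}-n_1)}$ in \eqref{eq:asymptotic_expansion} behaves differently for the $i=1$ summand than for the $i\geq 2$ summands. Incrementing the first coordinate raises both $\norm{\n}$ and $n_1$ by one, so $\norm{\n+\e_1}-(n_1+1)=\norm{\n}-n_1$, and hence $q\sth(\n+\e_1)$ carries the same leading power of $\sigma$ as $q\sth(\n)$ on the right-hand side. By contrast, incrementing a coordinate $j\geq 2$ raises $\norm{\n}$ by one while leaving $n_1$ fixed, so $\norm{\n+\e_j}-n_1=\norm{\n}-n_1+1$, and each such term is suppressed by one extra factor of $\sigma^{-1}$. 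This power asymmetry is precisely what will later produce the index shift $k\mapsto k-1$ and the indicator $\ind{k\geq 1}$.

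With this in hand I would multiply the substituted version of \eqref{eq:to_reduce_recursion} through by $\sigma^{\norm{\n}-n_1}$ to clear the common prefactor, leaving the $i=1$ contribution at powers $\sigma^{-k}$ and the $j\geq 2$ contributions at powers $\sigma^{-(k+1)}$. Reindexing the latter group via $k\mapsto k-1$ collects everything into a single series in $\sigma^{-k}$, after which I would invoke the uniqueness of asymptotic expansions to equate coefficients at each order. Reading off the coefficient of $\sigma^{-k}$ then gives $\tilde{q}_k(\n+\e_1)+\ind{k\geq 1}\sum_{j=2}^d \tilde{q}_{k-1}(\n+\e_j)=\tilde{q}_k(\n)$, as required; the indicator appears because the reindexed $j$-sum contributes nothing at order $k=0$. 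Finally I would note that the argument applies verbatim at $\n=\boldsymbol{0}$, since \eqref{eq:to_reduce_recursion} and the convention $\tilde{q}_k(\boldsymbol{0})=\ind{k=0}$ both hold there.

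Since the computation is essentially bookkeeping, the only genuine subtlety---and the step I would treat most carefully---is the legitimacy of matching coefficients term by term. This is valid because \eqref{eq:asymptotic_expansion} is asserted to hold for every $K$, so both sides are genuine asymptotic series and an asymptotic expansion in inverse powers of $\sigma$ is unique; keeping the $\mathcal{O}(\sigma^{-(K+1)})$ remainder explicit throughout makes this rigorous without presupposing convergence of the series.
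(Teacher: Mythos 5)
Your proposal is correct and follows essentially the same route as the paper's proof: substitute the expansion \eqref{eq:asymptotic_expansion} into the consistency condition \eqref{eq:to_reduce_recursion}, exploit the fact that the $\e_1$ term shares the prefactor $\sigma^{-(\norm{\n}-n_1)}$ of the right-hand side while each $\e_j$ term ($j\geq 2$) picks up an extra $\sigma^{-1}$, then clear the common prefactor, reindex, and match coefficients order by order (including the check at $\n=\boldsymbol{0}$). Your explicit remark on the uniqueness of asymptotic expansions is a slightly more careful justification of the coefficient-matching step that the paper leaves implicit, but it is the same argument.
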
  

\begin{remark}
\label{rmk:assumption_selection}
From the asymptotic analysis so far, it has emerged that none of the results depends on the selection parameters $\sigma_2,\dots,\sigma_d$ of the unfit alleles: these results include the diffusion limits of Section \ref{sect:diffusions} and consequently the Gamma approximation of Section \ref{sect:sampling_prob_gamma} which concern first-order asymptotics for general mutation mechanisms, and the full asymptotic expansion of Section \ref{sect:sampling_prob_2alleles} which includes also higher orders for the two-allele (PIM) case. 
This suggests that $\sigma_2,\dots,\sigma_d$ do not influence the upcoming asymptotic analysis, at least in the first order.
We thus assume from now on $\sigma_2,\dots,\sigma_d = 0$, which significantly facilitates the exposition, e.g.\ \cite{fearnhead2002} also assumes selection parameters of unfit alleles to be null.

Section \ref{sect:genealogy} and \ref{sect:duality} are concerned only with first-order results, thus we expect the same asymptotics to hold also for general $\sigma_2,\dots,\sigma_d$. 
A clue is provided by the first order in Theorem  \ref{thm:characterisation_asyptotic_coefficients} below, corresponding to the Gamma approximation of Section \ref{sect:sampling_prob_gamma} which \emph{is} valid for general $\sigma_2,\dots,\sigma_d$, confirming our expectation.
The question on whether higher orders in Theorem  \ref{thm:characterisation_asyptotic_coefficients} depend on  $\sigma_2,\dots,\sigma_d $  remains open in the general case (although we know they do not in the two-allele case).
\end{remark}

\begin{theorem}
	\label{thm:characterisation_asyptotic_coefficients}
	
	The coefficients $\tilde{q}_k(\n)$, $k=0,1,2,\dots$, of the asymptotic expansion  \eqref{eq:asymptotic_expansion} are characterised by the following set of boundary conditions and recursive equations.
	\\
	\begin{enumerate}[label=(\Roman*)]    
		\item (Boundary---fit allele) 
		\label{thm:boundary_fit}
		\begin{align*}
			\tilde{q}_k(\e_1) 
			&=
			\ind{k=0} - 
			\ind{k\geq 1}
			\sum_{i=2}^d
			\tilde{q}_{k-1}(\e_i).
		\end{align*}
		
		\item (Boundary---unfit alleles)
		\label{thm:boundary_unfit}
		For $ i=2,\dots,d$,
		\begin{align*}
			\tilde{q}_k(\e_i) 
			&=
			\ind{k=0} \theta P_{1i}  
			+ 
			\ind{k\geq 1} \bigg\{
			- \theta \sum_{j=2}^d 
			[\delta_{ij}+P_{1i}-P_{ji}]
			\tilde{q}_{k-1}(\e_j) 
			+ 
			\sum_{j=2}^d
			\tilde{q}_{k-1}(\e_i+\e_j) 
			\bigg\}.
		\end{align*} 
		
		\item (Recursion---fit allele only)
		\label{thm:recursion_fit}
		For $n_1>1$, 
		\begin{align*}
			\tilde{q}_k(n_1\e_1)
			= {}& 
			\tilde{q}_k((n_1-1)\e_1)
			- \ind{k\geq 1} 
			\theta (1-P_{11}) \tilde{q}_{k-1}((n_1-1)\e_1)
			\\  & {}+  \ind{k\geq 2} 
			\sum_{j=2}^d
			(n_1-2+\theta P_{j1})
			\tilde{q}_{k-2}((n_1-2)\e_1+\e_j).
		\end{align*}   
		\item (Recursion---general)
		\label{thm:recursion_general}
		For all $\n$ with $\norm{\n}>1$, 
		\begin{align*}
			(\norm{\n}-n_1 ) \tilde{q}_k(\n) 
			={}&
			\sum_{i=2}^d
			n_i(n_i-1 +\theta P_{1i})
			\tilde{q}_k(\n-\e_i)  
			\\  &  
			{}+ \ind{k\geq 1}  \bigg\{
			[n_1(n_1-1+ \theta P_{11}) - \norm{\n}  (\norm{\n}-1+\theta ) ] \tilde{q}_{k-1}(\n)
			\\  & {}+  
			\sum_{i,j=2}^d
			n_i \theta (P_{ji}- P_{1i})
			\tilde{q}_{k-1}(\n-\e_i+\e_j) 
			+ 
			\norm{\n}
			\sum_{i=2}^d
			\tilde{q}_{k-1}(\n+\e_i) 
			\bigg\}
			\\  & {}+  \ind{k\geq 2} 
			\sum_{j=2}^d
			n_1 (n_1-1+\theta P_{j1})
			\tilde{q}_{k-2}(\n-\e_1+\e_j).
		\end{align*}
	\end{enumerate}
\end{theorem}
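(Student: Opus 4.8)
The plan is to derive each of the four families of identities by substituting the asymptotic expansion \eqref{eq:asymptotic_expansion} into the recursion \eqref{eq:sampling_recursion} and matching coefficients of equal powers of $\sigma$, exactly as in the proof of Lemma \ref{thm:tricks_recursion}, but now for the full recursion rather than the consistency condition. The key bookkeeping point is that $q\sth(\n)$ carries a prefactor $\sigma^{-(\norm{\n}-n_1)}$, and this prefactor \emph{differs} between the terms appearing in \eqref{eq:sampling_recursion}: shifting $\n$ to $\n-\e_i$ changes $\norm{\n}-n_1$ by $-1$ if $i\geq 2$ (and by $0$ if $i=1$); shifting to $\n-\e_i+\e_j$ changes it by $+1$ if $i=1,j\geq 2$, by $-1$ if $i\geq 2,j=1$, and by $0$ otherwise; shifting to $\n+\e_i$ ($i\geq 2$) changes it by $+1$. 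So the first step is to tabulate, for every term in \eqref{eq:sampling_recursion}, the net power of $\sigma$ it contributes after factoring out a common $\sigma^{-(\norm{\n}-n_1)}$, keeping track also of the explicit factors of $\sigma$ in the coefficients (the $(\norm{\n}-n_1)\sigma$ on the left and the $\norm{\n}\sigma$ on the right).

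With that table in hand, I would multiply \eqref{eq:sampling_recursion} through by $\sigma^{\norm{\n}-n_1}$ and collect the coefficient of $\sigma^{-k}$ on each side. The recursion has the schematic form (leading order) $\sigma\cdot[\text{stuff}] + [\text{order }1\text{ stuff}] = \dots$, so the dominant balance at $\mathcal{O}(\sigma^{1})$ relative to the common prefactor is what produces the main recursive relation, while the subleading balances feed in via the $\ind{k\geq 1}$ and $\ind{k\geq 2}$ correction terms. Concretely, the left-hand coefficient $(\norm{\n}-n_1)\sigma$ multiplies $q\sth(\n)$, contributing $(\norm{\n}-n_1)\tilde q_k(\n)$ to the order-$\sigma^{-k}$ balance after one index shift; the $\sum_{i=2}^d\norm{\n}\sigma q\sth(\n+\e_i)$ term on the right, after its own index shift, lands at the adjacent order and so appears with an $\ind{k\geq 1}$; the pure-coalescence term $\sum_i n_i(n_i-1)q\sth(\n-\e_i)$ splits according to whether $i=1$ or $i\geq 2$, producing the genuine leading recursion for $i\geq 2$ and an $\ind{k\geq 1}$ contribution for $i=1$; and the mutation sum $\sum_{i,j}n_i\theta P_{ji}q\sth(\n-\e_i+\e_j)$ must be decomposed by the four cases $(i=1/i\geq 2)\times(j=1/j\geq 2)$, with the $i=1,j\geq 2$ piece contributing the $\ind{k\geq 2}$ terms. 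Each of the four statements \ref{thm:boundary_fit}--\ref{thm:recursion_general} is then just the order-by-order reading of this balance for the appropriate class of $\n$: \ref{thm:boundary_fit} and \ref{thm:boundary_unfit} handle $\n=\e_1$ and $\n=\e_i$ (where $\norm{\n}-n_1=0$ or $1$ respectively and the left side of \eqref{eq:sampling_recursion} is degenerate, so the consistency identity of Lemma \ref{thm:tricks_recursion} must be invoked to close the equation), while \ref{thm:recursion_fit} is the specialisation of \ref{thm:recursion_general} to $\n=n_1\e_1$ where $\norm{\n}-n_1=0$ causes the leading coefficient to vanish and forces a reduction via Lemma \ref{thm:tricks_recursion}.

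The step I expect to be the main obstacle is precisely the treatment of the boundary cases \ref{thm:boundary_fit} and \ref{thm:recursion_fit}, where $\norm{\n}=n_1$ so the coefficient $(\norm{\n}-n_1)\sigma$ annihilates the leading contribution of $q\sth(\n)$ itself. In these cases the naive coefficient-matching gives a relation in which $\tilde q_k(n_1\e_1)$ does not appear at the expected order, and one has to combine the recursion \eqref{eq:sampling_recursion} with the consistency condition \eqref{eq:to_reduce_recursion} (i.e.\ Lemma \ref{thm:tricks_recursion}) to eliminate the higher-frequency terms $\tilde q_{\,\cdot}(\n+\e_i)$ and re-express everything in terms of configurations of total size $\leq\norm{\n}$; this is what generates the $\theta(1-P_{11})$ factor and the $(n_1-2+\theta P_{j1})$ coefficient in \ref{thm:recursion_fit}. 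Getting the index shifts and the $\ind{k\geq 1},\ind{k\geq 2}$ guards exactly right there, and confirming that the general recursion \ref{thm:recursion_general} degenerates consistently into \ref{thm:recursion_fit} when $\n=n_1\e_1$, is where almost all the care is needed; the remaining algebra is routine. Throughout I would lean on the conventions $\tilde q_k(\boldsymbol 0)=\ind{k=0}$ and $\tilde q_k(\n)=0$ for $\n\notin\mathbb{N}^d$ to keep the boundary terms uniform and avoid special-casing configurations with a vanishing component.
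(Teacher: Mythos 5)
Your proposal is correct and follows essentially the same route as the paper's proof: substitute \eqref{eq:asymptotic_expansion} into \eqref{eq:sampling_recursion}, match coefficients of each power of $\sigma$ (keeping track of the configuration-dependent prefactors exactly as you tabulate them), and close the boundary and degenerate cases ($\norm{\n}=n_1$) by combining the resulting identity with Lemma \ref{thm:tricks_recursion} and the convention $\tilde{q}_k(\boldsymbol{0})=\ind{k=0}$. The only cosmetic differences are that the paper obtains \ref{thm:boundary_fit} directly from $\sum_{i=1}^d q\sth(\e_i)=1$ (i.e.\ Lemma \ref{thm:tricks_recursion} at $\n=\boldsymbol{0}$), applies Lemma \ref{thm:tricks_recursion} at $\n-\e_i$ and $\n-\e_1$ to bring the raw coefficient-matched identity into the exact stated form of \ref{thm:recursion_general}, and derives \ref{thm:recursion_fit} by evaluating \ref{thm:recursion_general} at $(n_1-1)\e_1$ rather than at $n_1\e_1$ --- the same computation you describe, after relabelling.
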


\begin{proof}
	Insert the asymptotic expansion \eqref{eq:asymptotic_expansion} in the equation $\sum_{i=1}^d q\sth (\e_i)=1$ and rearrange to obtain
	\begin{align*}
		1= \sum_{i=1}^d q\sth (\e_i)
		&= 
		\sum_{k=0}^K
		\tilde{q}_k(\e_1)  \frac{1}{\sigma^k}
		+ \mathcal{O}\left( \frac{1}{\sigma^{K+1}}\right)
		+ \sum_{i=2}^d
		\frac{1}{\sigma}
		\left\{\sum_{k=0}^K
		\tilde{q}_k(\e_i)  \frac{1}{\sigma^k}
		+ \mathcal{O}\left( \frac{1}{\sigma^{K+1}}\right)
		\right\} 
		\\ &= 
		\sum_{k=0}^K
		\frac{1}{\sigma^k} \tilde{q}_k(\e_1) 
		+ 
		\sum_{k=1}^K \frac{1}{\sigma^k}
		\sum_{i=2}^d
		\tilde{q}_{k-1}(\e_i)  
		+ \mathcal{O}\left( \frac{1}{\sigma^{K+1}}\right).
	\end{align*}
	Matching the coefficients of each order yields \ref{thm:boundary_fit}.    
	
	Next, inserting the asymptotic expansion \eqref{eq:asymptotic_expansion} in \eqref{eq:sampling_recursion}  gives
	\begin{align*}
		& \{\norm{\n}  (\norm{\n}-1+\theta ) 
		+
		(\norm{\n}-n_1 )\sigma \} 
		\frac{1}{\sigma^{\norm{\n}-n_1}}
		\left\{\sum_{k=0}^K
		\tilde{q}_k(\n)  \frac{1}{\sigma^k}
		+ \mathcal{O}\left( \frac{1}{\sigma^{K+1}}\right)
		\right\} 
		\\  & \quad = 
		n_1(n_1-1)
		\frac{1}{\sigma^{\norm{\n}-n_1}}
		\left\{\sum_{k=0}^K
		\tilde{q}_k(\n -\e_1)  \frac{1}{\sigma^k}
		+ \mathcal{O}\left( \frac{1}{\sigma^{K+1}}\right)
		\right\} 
		\\  & \quad\quad +
		\sum_{i=2}^d
		n_i(n_i-1)
		\frac{1}{\sigma^{\norm{\n}-n_1-1}}
		\left\{\sum_{k=0}^K
		\tilde{q}_k(\n-\e_i)  \frac{1}{\sigma^k}
		+ 
		\mathcal{O}\left( \frac{1}{\sigma^{K+1}}\right)
		\right\} 
		\\  & \quad\quad +
		n_1 \theta P_{11}
		\frac{1}{\sigma^{\norm{\n}-n_1}}
		\left\{\sum_{k=0}^K
		\tilde{q}_k(\n)  \frac{1}{\sigma^k}
		+ \mathcal{O}\left( \frac{1}{\sigma^{K+1}}\right)
		\right\} 
		\\  & \quad\quad +
		\sum_{i=2}^d
		n_i \theta P_{1i}
		\frac{1}{\sigma^{\norm{\n}-n_1-1}}
		\left\{\sum_{k=0}^K
		\tilde{q}_k(\n -\e_i+\e_1)  \frac{1}{\sigma^k}
		+ \mathcal{O}\left( \frac{1}{\sigma^{K+1}}\right)
		\right\} 
		\\  & \quad\quad +
		\sum_{j=2}^d
		n_1 \theta P_{j1}
		\frac{1}{\sigma^{\norm{\n}-n_1+1}}
		\left\{\sum_{k=0}^K
		\tilde{q}_k(\n-\e_1+\e_j)  \frac{1}{\sigma^k}
		+ \mathcal{O}\left( \frac{1}{\sigma^{K+1}}\right)
		\right\} 
		\\  & \quad\quad +
		\sum_{i,j=2}^d
		n_i \theta P_{ji}
		\frac{1}{\sigma^{\norm{\n}-n_1}}
		\left\{\sum_{k=0}^K
		\tilde{q}_k(\n-\e_i+\e_j)  \frac{1}{\sigma^k}
		+ \mathcal{O}\left( \frac{1}{\sigma^{K+1}}\right)
		\right\} 
		\\  & \quad\quad +
		\sum_{i=2}^d
		\norm{\n} \sigma 
		\frac{1}{\sigma^{\norm{\n}-n_1+1}}
		\left\{\sum_{k=0}^K
		\tilde{q}_k(\n+\e_i)  \frac{1}{\sigma^k}
		+ \mathcal{O}\left( \frac{1}{\sigma^{K+1}}\right)
		\right\} .
	\end{align*}
	Multiply both sides of the equation above for $\sigma^{\norm{\n}-n_1-1}$ and adjust the indices of the sums so that each sum contains $\frac{1}{\sigma^{k}}$, so that 
	\begin{align*}
		& \sum_{k=0}^K  \frac{1}{\sigma^{k}}
		(\norm{\n}-n_1 ) \tilde{q}_k(\n) 
		+
		\sum_{k=1}^{K} \frac{1}{\sigma^{k}}
		[\norm{\n}  (\norm{\n}-1+\theta ) -n_1 \theta P_{11}] \tilde{q}_{k-1}(\n)  
		\\ & \quad = 
		\sum_{k=0}^K  \frac{1}{\sigma^{k}}
		\sum_{i=2}^d
		n_i(n_i-1)
		\tilde{q}_k(\n-\e_i) 
		+
		\sum_{k=0}^K \frac{1}{\sigma^{k}}
		\sum_{i=2}^d
		n_i \theta P_{1i}
		\tilde{q}_k(\n -\e_i+\e_1)  
		\\ & \quad \quad + 
		\sum_{k=1}^K\frac{1}{\sigma^{k}}
		n_1(n_1-1)
		\tilde{q}_{k-1}(\n -\e_1)  
		+
		\sum_{k=1}^K  \frac{1}{\sigma^{k}}
		\sum_{i,j=2}^d
		n_i \theta P_{ji}
		\tilde{q}_{k-1}(\n-\e_i+\e_j) 
		\\ & \quad \quad +
		\sum_{k=1}^K  \frac{1}{\sigma^{k}}
		\sum_{i=2}^d
		\norm{\n} 
		\tilde{q}_{k-1}(\n+\e_i) 
		+
		\sum_{k=2}^K  \frac{1}{\sigma^{k}}
		\sum_{j=2}^d
		n_1 \theta P_{j1}
		\tilde{q}_{k-2}(\n-\e_1+\e_j) 
		+ \mathcal{O}\left( \frac{1}{\sigma^{K+1}}\right) .
	\end{align*}
	The equality above must hold for the coefficients of each order. That is, for each $k=0,1,2,\dots$,  
	\begin{align}
		\label{eq:recursion_basis}
		\nonumber
		(\norm{\n}-n_1 ) \tilde{q}_k(\n) 
		=&
		\sum_{i=2}^d
		n_i(n_i-1)
		\tilde{q}_k(\n-\e_i) 
		+
		\sum_{i=2}^d
		n_i \theta P_{1i}
		\tilde{q}_k(\n -\e_i+\e_1)  
		\\ \nonumber & 
		+ 
		\ind{k\geq 1} \bigg\{
		n_1(n_1-1)
		\tilde{q}_{k-1}(\n -\e_1) 
		+ 
		[n_1 \theta P_{11} - \norm{\n}  (\norm{\n}-1+\theta ) ] \tilde{q}_{k-1}(\n) 
		\\ \nonumber & +  
		\sum_{i,j=2}^d
		n_i \theta P_{ji}
		\tilde{q}_{k-1}(\n-\e_i+\e_j) 
		+ 
		\norm{\n}
		\sum_{i=2}^d
		\tilde{q}_{k-1}(\n+\e_i) 
		\bigg\}
		\\  & +  \ind{k\geq 2}  
		\sum_{j=2}^d
		n_1 \theta P_{j1}
		\tilde{q}_{k-2}(\n-\e_1+\e_j) .
	\end{align}
	Equation \eqref{eq:recursion_basis} constitutes the core of the theorem and it is used to prove \ref{thm:boundary_unfit}, \ref{thm:recursion_fit}, and \ref{thm:recursion_general}, as follows. 
	
	For $i=2,\dots,d$, apply \eqref{eq:recursion_basis} to  $\n=\e_i$ and appeal to \ref{thm:boundary_fit} to obtain 
	\begin{align*}
		\tilde{q}_k(\e_i) 
		&=
		\theta P_{1i} 
		\tilde{q}_k(\e_1)  
		+ 
		\ind{k\geq 1} \bigg\{
		- \theta  \tilde{q}_{k-1}(\e_i) 
		+  
		\sum_{j=2}^d
		\theta P_{ji}
		\tilde{q}_{k-1}(\e_j) 
		+ 
		\sum_{j=2}^d
		\tilde{q}_{k-1}(\e_i+\e_j) 
		\bigg\}
		\\  &=
		\theta P_{1i}  \ind{k=0}
		\\ &\quad + 
		\ind{k\geq 1} \bigg\{
		-\theta P_{1i}
		\sum_{j=2}^d
		\tilde{q}_{k-1}(\e_j) 
		- \theta  \tilde{q}_{k-1}(\e_i) 
		+  
		\sum_{j=2}^d
		\theta P_{ji}
		\tilde{q}_{k-1}(\e_j) 
		+ 
		\sum_{j=2}^d
		\tilde{q}_{k-1}(\e_i+\e_j) 
		\bigg\},
	\end{align*}   
	which proves \ref{thm:boundary_unfit}. 
	
	For $\n$ with $\norm{\n}>1$, apply Lemma \ref{thm:tricks_recursion} to  obtain
	\begin{align*}
		\tilde{q}_k(\n-\e_i+\e_1)  
		&=
		\tilde{q}_k(\n-\e_i)  
		-\ind{k\geq 1}
		\sum_{j=2}^d
		\tilde{q}_{k-1}(\n-\e_i+\e_j), \\
		\tilde{q}_k(\n-\e_1)  
		&=
		\tilde{q}_k(\n)  
		+
		\ind{k\geq 1}
		\sum_{j=2}^d
		\tilde{q}_{k-1}(\n-\e_1+\e_j), 
	\end{align*}
	which applied to \eqref{eq:recursion_basis} yields \ref{thm:recursion_general}.
	
	Finally, for $\norm{\n}=n_1>1$, i.e.\ $\n=n_1\e_1$, \ref{thm:recursion_general} is still valid but does not give an explicit formula. 
	Instead, apply \ref{thm:recursion_general} to $(n_1-1)\e_1$ to obtain
	\begin{align}
		0
		={}& 
		\ind{k\geq 1} 
		[(n_1-1)(n_1-2+ \theta P_{11}) -(n_1-1)  (n_1-2+\theta ) ] \tilde{q}_{k-1}((n_1-1)\e_1)
		\label{eq:partIII-recursion}\\  &  
		{}+ 
		(n_1-1)
		\ind{k\geq 1} 
		\sum_{i=2}^d
		\tilde{q}_{k-1}((n_1-1)\e_1+\e_i) 
		\\ & + 
		  \ind{k\geq 2} 
		\sum_{j=2}^d
		(n_1-1) (n_1-2+\theta P_{j1})
		\tilde{q}_{k-2}((n_1-2)\e_1+\e_j).\notag
	\end{align}   
	Noting that, by Lemma \ref{thm:tricks_recursion},
	\begin{align*}
		\ind{k\geq 1} 
		\sum_{i=2}^d
		\tilde{q}_{k-1}((n_1-1)\e_1+\e_i) 
		=
		\tilde{q}_{k}((n_1-1)\e_1)- \tilde{q}_{k}(n_1\e_1),
	\end{align*}
	and substituting this identity into the first summation on the right-hand side of \eqref{eq:partIII-recursion} proves \ref{thm:recursion_fit}. 
\end{proof}

The results of Theorem \ref{thm:characterisation_asyptotic_coefficients} under the special case of parent-independent mutation are stated in the following. 

\begin{corollary}[PIM]
	\label{thm:characterisation_asyptotic_coefficients_PIM}
	Assume parent-independent mutation, i.e.\ $ P_{ij}=Q_j$, $i,j=1,\dots,d$. Then, for $k=0,1,2,\dots,$
	\begin{enumerate}[label=(\Roman*)]
		
		\item (Boundary---fit allele) 
		\label{thm:boundary_fit_PIM}
		\begin{align*}
			\tilde{q}_k(\e_1) 
			=
			\ind{k=0} - 
			\ind{k\geq 1}
			\sum_{i=2}^d
			\tilde{q}_{k-1}(\e_i).
		\end{align*}
		\item (Boundary---unfit alleles)
		\label{thm:boundary_unfit_PIM}
		For $i=2,\dots,d$,
		\begin{align*}
			\tilde{q}_k(\e_i) 
			&=
			\ind{k=0} \theta Q_i  
			+ 
			\ind{k\geq 1} \bigg\{
			- \theta 
			\tilde{q}_{k-1}(\e_i) 
			+ 
			\sum_{j=2}^d
			\tilde{q}_{k-1}(\e_i+\e_j) 
			\bigg\}.
		\end{align*}    
		\item (Recursion---fit allele only)
		\label{thm:recursion_fit_PIM}
		For $n_1>1$, 
		\begin{align*}
			\tilde{q}_k(n_1\e_1)
			={}& 
			\tilde{q}_k((n_1-1)\e_1)
			- \ind{k\geq 1} 
			 (n_1 -2+\theta) \tilde{q}_{k-1}((n_1-1)\e_1)
			\\  & {}+  \ind{k\geq 1} (n_1-2+\theta Q_{1}) 
			\tilde{q}_{k-1}((n_1-2)\e_1) .
		\end{align*}   
		\item (Recursion---general)
		\label{thm:recursion_general_PIM}
		For all $\n$ with $\norm{\n}>1$, 
		\begin{align*}
			(\norm{\n}-n_1 ) \tilde{q}_k(\n) 
			={}&
			\sum_{i=2}^d
			n_i(n_i-1 +\theta Q_{i})
			\tilde{q}_k(\n-\e_i)  
			\\ & + \ind{k\geq 1}  \bigg\{
			- \norm{\n}  (\norm{\n}-1+\theta )  \tilde{q}_{k-1}(\n)
			\\  &  
			{}+n_1(n_1-1+\theta Q_1)\tilde{q}_{k-1}(\n-\e_1)
			+ 
			\norm{\n}
			\sum_{i=2}^d
			\tilde{q}_{k-1}(\n+\e_i) 
			\bigg\}.
		\end{align*}
	\end{enumerate}
\end{corollary}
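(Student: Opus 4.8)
The plan is to obtain each of parts \ref{thm:boundary_fit_PIM}--\ref{thm:recursion_general_PIM} by specialising the corresponding part of Theorem \ref{thm:characterisation_asyptotic_coefficients} to $P_{ij}=Q_j$ and simplifying, the essential tool throughout being the consistency identity of Lemma \ref{thm:tricks_recursion}. Part \ref{thm:boundary_fit_PIM} requires no work: part \ref{thm:boundary_fit} of the theorem contains no mutation entries, so it transcribes verbatim. For part \ref{thm:boundary_unfit_PIM}, I would substitute $P_{1i}=Q_i$ and $P_{ji}=Q_i$ into part \ref{thm:boundary_unfit}; the bracket $\delta_{ij}+P_{1i}-P_{ji}$ collapses to $\delta_{ij}$ because $P_{1i}-P_{ji}=Q_i-Q_i=0$, and then $\sum_{j=2}^d\delta_{ij}\tilde{q}_{k-1}(\e_j)=\tilde{q}_{k-1}(\e_i)$ since $i\geq2$, while $\theta P_{1i}=\theta Q_i$, giving the stated form directly.

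The two recursions \ref{thm:recursion_fit_PIM} and \ref{thm:recursion_general_PIM} are where the genuine simplification occurs, and the common mechanism is that PIM makes the relevant mutation coefficients independent of the ``source'' index, which then lets the residual sums be eliminated by Lemma \ref{thm:tricks_recursion}. For part \ref{thm:recursion_general_PIM}, substituting $P_{1i}=Q_i$, $P_{11}=Q_1$, $P_{j1}=Q_1$ into part \ref{thm:recursion_general} immediately kills the double mutation sum, since $P_{ji}-P_{1i}=0$, and turns the $\ind{k\geq2}$ term into $\ind{k\geq2}\,n_1(n_1-1+\theta Q_1)\sum_{j=2}^d\tilde{q}_{k-2}(\n-\e_1+\e_j)$. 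I would then apply Lemma \ref{thm:tricks_recursion} at the point $\n-\e_1$ with index $k-1$, namely $\tilde{q}_{k-1}(\n)+\ind{k-1\geq1}\sum_{j=2}^d\tilde{q}_{k-2}(\n-\e_1+\e_j)=\tilde{q}_{k-1}(\n-\e_1)$, and multiply through by $\ind{k\geq1}\,n_1(n_1-1+\theta Q_1)$, using $\ind{k\geq1}\ind{k-1\geq1}=\ind{k\geq2}$. This merges the $\ind{k\geq1}\,n_1(n_1-1+\theta Q_1)\tilde{q}_{k-1}(\n)$ contribution and the $\ind{k\geq2}$ sum into the single term $\ind{k\geq1}\,n_1(n_1-1+\theta Q_1)\tilde{q}_{k-1}(\n-\e_1)$, producing exactly \ref{thm:recursion_general_PIM}.

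Part \ref{thm:recursion_fit_PIM} follows the same template applied to $\n=n_1\e_1$. After specialising part \ref{thm:recursion_fit}, the $\ind{k\geq2}$ term becomes $\ind{k\geq2}(n_1-2+\theta Q_1)\sum_{j=2}^d\tilde{q}_{k-2}((n_1-2)\e_1+\e_j)$ once the constant $P_{j1}=Q_1$ is pulled out of the sum, and Lemma \ref{thm:tricks_recursion} at $(n_1-2)\e_1$ with index $k-1$ rewrites the sum as $\tilde{q}_{k-1}((n_1-2)\e_1)-\tilde{q}_{k-1}((n_1-1)\e_1)$. Collecting the coefficient of $\tilde{q}_{k-1}((n_1-1)\e_1)$ then gives $-\theta(1-Q_1)-(n_1-2+\theta Q_1)=-(n_1-2+\theta)$, while the coefficient of $\tilde{q}_{k-1}((n_1-2)\e_1)$ is $(n_1-2+\theta Q_1)$, which is the claimed statement.

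I expect the main obstacle to be bookkeeping rather than conceptual: correctly tracking the index shifts and the indicator boundaries each time an $\ind{k\geq2}$ term is traded for an $\ind{k\geq1}$ term. The one edge case worth checking explicitly is $k=1$, where the $\ind{k\geq2}$ contributions vanish; here the Lemma degenerates to $\tilde{q}_0(\n)=\tilde{q}_0(\n-\e_1)$ with no residual sum, so both forms stay consistent, reflecting that $\tilde{q}_0(n_1\e_1)$ is constant in $n_1$ and equal to $1$ by part \ref{thm:boundary_fit_PIM}.
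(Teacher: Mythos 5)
Your proposal is correct and takes essentially the same route as the paper's own (much terser) proof: parts \ref{thm:boundary_fit_PIM}--\ref{thm:boundary_unfit_PIM} are direct specialisations of Theorem \ref{thm:characterisation_asyptotic_coefficients} under $P_{ij}=Q_j$, and parts \ref{thm:recursion_fit_PIM}--\ref{thm:recursion_general_PIM} follow by inserting the identity of Lemma \ref{thm:tricks_recursion} (at $\n-\e_1$, resp.\ $(n_1-2)\e_1$, with index $k-1$) to absorb the $\ind{k\geq 2}$ sums, then rearranging. The coefficient bookkeeping you make explicit, e.g.\ $-\theta(1-Q_1)-(n_1-2+\theta Q_1)=-(n_1-2+\theta)$ and the merging of $\ind{k\geq 1}\,n_1(n_1-1+\theta Q_1)\tilde{q}_{k-1}(\n)$ with the $\ind{k\geq 2}$ term into $\ind{k\geq 1}\,n_1(n_1-1+\theta Q_1)\tilde{q}_{k-1}(\n-\e_1)$, is precisely the ``insert the expression from Lemma \ref{thm:tricks_recursion}, and rearrange'' that the paper leaves implicit.
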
 

\begin{proof}
\ref{thm:boundary_fit_PIM} and \ref{thm:boundary_unfit_PIM} are trivial. For \ref{thm:recursion_fit_PIM} and \ref{thm:recursion_general_PIM}, apply Theorem \ref{thm:characterisation_asyptotic_coefficients}, insert the expression from Lemma \ref{thm:tricks_recursion}, and rearrange. 
\end{proof}

Theorem \ref{thm:characterisation_asyptotic_coefficients} shows that the coefficients 
$\tilde{q}_k(\n)$, $k=0,1,2,\dots$, of the asymptotic expansion \eqref{eq:asymptotic_expansion} are characterised by a double recursion; that is, a main recursion for $k=0,1,2,\dots$ and, for each $k$, a secondary recursion in $\norm{\n}$. More precisely, fix $k$ and suppose $\tilde{q}_{k-2}(\n)$ and $\tilde{q}_{k-1}(\n)$ are known for all $\n$ (if $k=1$, it is enough to know   $\tilde{q}_{k-1}(\n)$, and if $k=0$, no prior knowledge is needed).
Then,  steps \ref{thm:boundary_fit}--\ref{thm:recursion_general} of Theorem 
\ref{thm:characterisation_asyptotic_coefficients} provide a recursion in $\norm{\n}$ to express $\tilde{q}_k(\n)$ in terms of 
$\tilde{q}_{k-1}$,  $\tilde{q}_{k-2}$ (when $k\geq1$ and $k\geq 2$ resp.), and
$\tilde{q}_k(\m)$ with  $\m$ such that  $\norm{\m}< \norm{\n}$. 

Another way of interpreting Theorem \ref{thm:characterisation_asyptotic_coefficients}  is as a recursion in $k+\norm{\n}$. That is, once  $\tilde{q}_k(\n)$ is computed for all $k$ and $\n$ such that $k+\norm{\n}<h$, 
Theorem \ref{thm:characterisation_asyptotic_coefficients} provides a formula to compute $\tilde{q}_k(\n)$ for all $k$ and $\n$ such that $k+\norm{\n}=h$.
The initial step of this recursion, i.e.\ $h=\norm{\n}=1$,  is given by  
\ref{thm:boundary_fit} and \ref{thm:boundary_unfit} for $k=0$. Thus, all values of $\tilde{q}_k(\n)$ can be computed efficiently by dynamic programming. 

The double recursion shows that the coefficients are well characterised in theory, and is useful for the proof of Proposition \ref{thm:coefficient_PIM}, whereas the recursion in $k+\norm{\n}$ provides a general way of numerically computing the coefficients in practice. 
Furthermore, more explicit expressions are available in some cases, to which the rest of this section is dedicated.

\subsubsection*{The $0^{th}$ coefficient}

Let us make explicit the boundary conditions and the formulas for $\tilde{q}_0(\n)$.  
For the boundary conditions, from Theorem \ref{thm:characterisation_asyptotic_coefficients}[\ref{thm:boundary_fit},\ref{thm:boundary_unfit}]
we have 
\begin{equation}
	\label{eq:q_0_boundary}
	\tilde{q}_0(\e_1)=1, \quad \text{and} \quad   
	\tilde{q}_0(\e_i)= \theta P_{1i}, 
	\quad \text{for} \quad i=2,\dots,d.
\end{equation} 
Furthermore, Theorem \ref{thm:characterisation_asyptotic_coefficients}[\ref{thm:recursion_fit}] yields
\begin{equation}
	\label{eq:q_0_recursion_fit_only}
	\tilde{q}_0(n_1\e_1)= \tilde{q}_0((n_1-1)\e_1)= \dots = \tilde{q}_0(\e_1)=1,
\end{equation}
and Theorem \ref{thm:characterisation_asyptotic_coefficients}[\ref{thm:recursion_general}] gives the recursion 
\begin{align}
	\label{eq:q_0_recursion}
	(\norm{\n}-n_1 ) \tilde{q}_0(\n) 
	=&
	\sum_{i=2}^d
	n_i(n_i-1+\theta P_{1i})
	\tilde{q}_0(\n-\e_i) .
\end{align}
It is interesting to point out here that formulas 
\eqref{eq:q_0_recursion_fit_only} and \eqref{eq:q_0_recursion} 
are closely related to 
the limiting ancestral processes of a sample containing fit alleles only and containing some unfit alleles, respectively. In particular one can read off from \eqref{eq:q_0_recursion} the relative rates of coalescence and mutation. This is clarified and  discussed in the next section. 

The recursions \eqref{eq:q_0_recursion_fit_only} and  \eqref{eq:q_0_recursion} with the boundary conditions \eqref{eq:q_0_boundary} give a complete description of $\tilde{q}_0(\n)$, and thus it is now straightforward to verify that the Gamma approximation of the previous section gives the correct formula for $\tilde{q}_0(\n)$, as expected. That is,  
\begin{align}
	\label{eq:zero_coefficient}
	\tilde{q}_0(\n)=  
	\gamma_0(\n) =
	\prod_{i=2}^d \frac{\Gamma(\theta P_{1i}+n_i)}{\Gamma(\theta P_{1i})} .
\end{align}
In fact,  $\gamma_0(\e_1)=1$, and, for $i=2,\dots,d$, $\gamma_0(\e_i)=\frac{\Gamma(\theta P_{1i}+1)}{\Gamma(\theta P_{1i})}= \theta P_{1i}$. Furthermore, $\gamma_0(n_1\e_1)=1$, and $\gamma_0(\n)$ solves \eqref{eq:q_0_recursion}, since 
$\gamma_0(\n-\e_i)=
\frac{1}{n_i-1+\theta P_{1i}} \gamma_0(\n)$.

\subsubsection*{The $1^{st}$ coefficient}

Given that we know $\tilde{q}_0(\n)$ explicitly, we now show how to derive $\tilde{q}_1(\n)$. Let us start with boundary conditions. 
By Theorem \ref{thm:characterisation_asyptotic_coefficients}[\ref{thm:boundary_fit}],
\begin{align*}
	\tilde{q}_1(\e_1) 
	= - 
	\sum_{i=2}^d
	\tilde{q}_{0}(\e_i)
	=
	-  \sum_{i=2}^d \theta P_{1i}
	= 
	- \theta (1-P_{11}) .
\end{align*}
Note that $\tilde{q}_1(\e_1) = \gamma_1(\e_1)$ as expected. 
By Theorem  \ref{thm:characterisation_asyptotic_coefficients}[\ref{thm:boundary_unfit}], for $i=2,\dots,d,$
\begin{align*}
	\tilde{q}_1(\e_i) 
	&=
	- \theta \sum_{j=2}^d 
	[\delta_{ij}+P_{1i}-P_{ji}]
	\tilde{q}_{0}(\e_j) 
	+ 
	\sum_{j=2}^d
	\tilde{q}_{0}(\e_i+\e_j)
	\\&= 
	- \theta \sum_{j=2}^d 
	[\delta_{ij}+P_{1i}-P_{ji}]
	\theta P_{1j}
	+ 
	\sum_{j=2}^d
	(\theta P_{1i}+\delta_{ij}) \theta P_{1j}
	\\&= 
	\theta P_{1i} (1-\theta) + \theta^2 \sum_{j=2}^d  P_{1j}P_{ji}.
\end{align*} 
By Theorem \ref{thm:characterisation_asyptotic_coefficients}[\ref{thm:recursion_fit}], for $n_1>1$, 
    \begin{align}
     \nonumber
    \tilde{q}_1(n_1\e_1) 
    &=
    \tilde{q}_1((n_1-1)\e_1)  
    - \theta(1- P_{11} ) \tilde{q}_{0}((n_1-1)\e_1)
    =  \dots 
    \\ \nonumber &=
    \tilde{q}_1(\e_1) - (n_1-1)\theta(1- P_{11} )
    \\ 
    \label{eq:one_coefficient_allfit}
    &= 
    - n_1\theta(1- P_{11} ).
    \end{align}
Again $\tilde{q}_1(n_1\e_1) = \gamma_1(n_1\e_1)$ as expected. Finally, for  $\n$ with $\norm{\n}>1$, use Theorem \ref{thm:characterisation_asyptotic_coefficients}[\ref{thm:recursion_general}], and rearrange to obtain  
\begin{align*}
	(\norm{\n}-n_1 ) \tilde{q}_1(\n) 
	={} &
	\sum_{i=2}^d
	n_i(n_i-1 +\theta P_{1i})
	\tilde{q}_1(\n-\e_i)  
	\\  &  
	{}+ 
	[n_1(n_1-1+ \theta P_{11}) - \norm{\n}  (\norm{\n}-1+\theta ) ] \tilde{q}_{0}(\n)
	\\  & {}+  
	\sum_{i,j=2}^d
	n_i \theta (P_{ji}- P_{1i})
	\tilde{q}_{0}(\n-\e_i+\e_j) 
	+ 
	\norm{\n}
	\sum_{i=2}^d
	\tilde{q}_{0}(\n+\e_i) 
	\\
	={} &
	\sum_{i=2}^d
	n_i(n_i-1 +\theta P_{1i})
	\tilde{q}_1(\n-\e_i) 
	\\  &  
	{}- (\norm{\n}-n_1 ) (n_1-1+ \theta P_{11}) \tilde{q}_0(\n)
	\\  & +  
	\sum_{i,j=2}^d
	n_i \theta (P_{ji}- P_{1i})
	\frac{n_j+\theta P_{1j}-\delta_{ij}}{n_i+\theta P_{1i}-1}
	\tilde{q}_0(\n).
\end{align*}
All the formulas needed to describe $\tilde{q}_1$ explicitly are derived above. 
They can be used to compute $\tilde{q}_1$ by dynamic programming in the general case of parent-dependent mutation, and, in the PIM case, to verify through straightforward calculations that the explicit formula for $\tilde{q}_1$ is simply the generalisation of the two-allele formula derived in Section \ref{sect:sampling_prob_2alleles}, that is 
\begin{equation*}
	\tilde{q}_1(\n)
	= \tilde{q}_0(\n)
	[r_1(\n)-s_1],
\end{equation*}
with 
$r_1(\n)= [\norm{\n}-n_1+ \theta(1-Q_1)] (1-\theta Q_1 -n_1)$ and $s_1= \theta (1-Q_1)(1-\theta Q_1)$. In fact this generalisation holds at all orders, as we discover in the next result.


\subsubsection*{The $k^{th}$ coefficient}

While Theorem \ref{thm:characterisation_asyptotic_coefficients} can be used in general to compute the coefficients of each order numerically, in the PIM case, Corollary \ref{thm:characterisation_asyptotic_coefficients_PIM} can be used to prove a more explicit expression for the coefficients, which is stated in the following and generalises the two-allele expression derived in Section \ref{sect:sampling_prob_2alleles}. 

\begin{proposition}
	\label{thm:coefficient_PIM}
	Assume parent-independent mutation, i.e.\ $ P_{ij}=Q_j$, $i,j=1,\dots,d$. Then    
	the coefficients $\tilde{q}_k(\n)$, $k=0,1,2,\dots$, of the asymptotic expansion  \eqref{eq:asymptotic_expansion} can be written as
	\begin{align}
		\label{eq:coefficient_PIM}
		\tilde{q}_k(\n)
		= \tilde{q}_0(\n) r_k(\n) -
		\sum_{ m=0 }^{k-1} s_{k-m} \tilde{q}_m(\n),
	\end{align}
	where $\tilde{q}_0(\n)$ is defined by \eqref{eq:zero_coefficient}; and 
	\begin{align*}
		r_k(\n)&= \frac{1}{k!}(\norm{\n}-n_1+\theta (1-Q_1))_k(1-\theta Q_1-n_1)_k  , 
		\\
		s_k&= \frac{1}{k!}(\theta (1-Q_1))_k(1-\theta Q_1)_k.
	\end{align*}
\end{proposition}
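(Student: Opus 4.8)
The plan is to prove \eqref{eq:coefficient_PIM} by checking that the explicit formula is consistent with the recursive characterisation of Corollary \ref{thm:characterisation_asyptotic_coefficients_PIM}. Read as a recursion in $k+\norm{\n}$ (as explained after Theorem \ref{thm:characterisation_asyptotic_coefficients}), the relations \ref{thm:boundary_fit_PIM}--\ref{thm:recursion_general_PIM} determine the coefficients $\tilde q_k(\n)$ uniquely from the base case $\norm{\n}=1,k=0$; since \eqref{eq:coefficient_PIM} reproduces the correct data $\tilde q_0$ of \eqref{eq:zero_coefficient}, it suffices to show that it satisfies all four relations. First I would recast \eqref{eq:coefficient_PIM} in a more symmetric form: because $s_0=1$, absorbing the $m=0$ term into the sum shows that \eqref{eq:coefficient_PIM} is equivalent to
\begin{equation*}
\sum_{m=0}^{k} s_{k-m}\,\tilde q_m(\n) = \tilde q_0(\n)\, r_k(\n), \qquad k\geq 0,\ \n\in\mathbb{N}^d,
\end{equation*}
which holds trivially for $k=0$ since $r_0\equiv s_0\equiv 1$. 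This form is convenient because $s_k$ is independent of $\n$, so the shifts $\n\mapsto\n\pm\e_i$ in Corollary \ref{thm:characterisation_asyptotic_coefficients_PIM} act only on $\tilde q_0(\n)$ and $r_k(\n)$.

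Next I would record how these two ingredients transform under the shifts. From \eqref{eq:zero_coefficient} one has $\tilde q_0(\n-\e_i)=\tilde q_0(\n)/(n_i-1+\theta Q_i)$ for $i\geq 2$ and $\tilde q_0(\n\pm\e_1)=\tilde q_0(\n)$, while $r_k(\n)=\tfrac{1}{k!}(B)_k(1-A)_k$ with $B=\norm{\n}-n_1+\theta(1-Q_1)$ and $A=\theta Q_1+n_1$ changes only through $B$ (under $\n\mapsto\n\pm\e_i$, $i\geq 2$) or only through $A$ (under $\n\mapsto\n\pm\e_1$); the requisite contiguous relations then follow from the elementary identities $(a)_k=(a)_{k-1}(a+k-1)$ and $(a)_{k+1}=a(a+1)_k$. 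With these in hand, the boundary relations \ref{thm:boundary_fit_PIM} and \ref{thm:boundary_unfit_PIM} and the fit-only recursion \ref{thm:recursion_fit_PIM} (where $\tilde q_0\equiv 1$ and $B=\theta(1-Q_1)$) reduce to short computations after substituting the convolutional identity.

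The main obstacle is the general recursion \ref{thm:recursion_general_PIM}. Substituting the convolutional identity at each shifted argument and separating orders, the claim collapses to a single polynomial identity in the rising factorials $(B)_\bullet$ and $(1-A)_\bullet$, with the prefactor $(\norm{\n}-n_1)$ and the coalescence weights $n_i(n_i-1+\theta Q_i)$ mediating between orders $k$ and $k-1$. Some care is needed with the induction itself: the term $\tilde q_{k-1}(\n+\e_i)$ appearing in \ref{thm:recursion_general_PIM} lies at the \emph{same} level $k+\norm{\n}$ as $\tilde q_k(\n)$, so the argument must run as a double induction---on $h=k+\norm{\n}$ and, within each level, on increasing $k$ (equivalently decreasing $\norm{\n}$)---precisely the dynamic-programming order described after Theorem \ref{thm:characterisation_asyptotic_coefficients}. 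It is exactly here that parent-independence is essential: it makes $\tilde q_0(\n)$ factorise over the unfit coordinates and causes the cross-mutation contributions $n_i\theta(P_{ji}-P_{1i})$ of Theorem \ref{thm:characterisation_asyptotic_coefficients}\ref{thm:recursion_general} to vanish, without which $r_k$ and $s_k$ would not decouple into the stated product form.

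Finally, as an independent check that also explains the origin of the formula, one can bypass the recursion entirely in the PIM case. Integrating the unfit coordinates $x_2,\dots,x_d$ out of the numerator and denominator of \eqref{eq:PIM_sampling} by a Dirichlet integral collapses $q^{(\sigma)}(\n)$ to the product $\tilde q_0(\n)=\prod_{i=2}^d\Gamma(\theta Q_i+n_i)/\Gamma(\theta Q_i)$ times the ratio ${}_1F_1(\theta Q_1+n_1,\theta+\norm{\n},\sigma)/{}_1F_1(\theta Q_1,\theta,\sigma)$, all surplus Gamma factors cancelling. Applying Kummer's transformation and the large-negative-argument expansion of ${}_1F_1$ recorded in Section \ref{sect:sampling_prob_2alleles} turns this ratio into $\sigma^{-(\norm{\n}-n_1)}\bigl(\sum_k r_k(\n)\sigma^{-k}\bigr)\bigl(\sum_k s_k\sigma^{-k}\bigr)^{-1}$, and matching with \eqref{eq:asymptotic_expansion} reproduces the convolutional identity above; this route is, however, special to PIM, which is presumably why the recursive argument is preferred.
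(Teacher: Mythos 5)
Your primary argument coincides with the paper's proof: define the candidate coefficients by \eqref{eq:coefficient_PIM} and verify, by induction on $k$, that they satisfy relations \ref{thm:boundary_fit_PIM}--\ref{thm:recursion_general_PIM} of Corollary \ref{thm:characterisation_asyptotic_coefficients_PIM}, the conclusion following because those relations determine the coefficients uniquely. Your convolution form $\sum_{m=0}^{k}s_{k-m}\tilde{q}_m(\n)=\tilde{q}_0(\n)\,r_k(\n)$ and the Pochhammer contiguity identities are exactly the ingredients of the paper's computation, where they appear as ratio identities expressing $r_{k-1}(\e_i)$, $r_{k-1}(\n\pm\e_i)$ and $s_k$ in terms of $r_k$. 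Three points of comparison. First, you rightly make explicit that the verification strategy needs uniqueness of solutions to the characterisation, which the paper leaves implicit in the dynamic-programming discussion after Theorem \ref{thm:characterisation_asyptotic_coefficients}; however, your claim that the induction \emph{must} be organised by levels $h=k+\norm{\n}$ is too strong---the ordering with outer index $k$ and inner index $\norm{\n}$ works equally well, since the same-level term $\tilde{q}_{k-1}(\n+\e_i)$ has order $k-1$ and is covered by the outer hypothesis at every argument, and for the verification itself (as opposed to uniqueness) induction on $k$ alone suffices because the candidate formula is explicit in $\n$. Second, the steps you describe as ``short computations'' that ``collapse to a single polynomial identity'' are where essentially all of the substance of the paper's proof lies (roughly two pages of manipulations for \ref{thm:recursion_fit_PIM} and \ref{thm:recursion_general_PIM}); a complete write-up must carry them out, though there is no doubt they go through, since the paper does them. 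Third, your ${}_1F_1$ argument is a genuinely different route and would in fact be a complete standalone proof under PIM, not merely a check: the Dirichlet integral over the simplex reduces $q^{(\sigma)}(\n)$ to the $d$-allele analogue of \eqref{eq:1F1solution}, and Kummer's transformation together with the $z\to-\infty$ expansion quoted in Section \ref{sect:sampling_prob_2alleles} yields precisely the convolution identity (the surplus Gamma factors cancelling against the constants in the leading terms of the two expansions). This is the generalisation the paper describes as ``possible, albeit more cumbersome'' and declines to pursue; your observation that the simplex integral collapses to a one-dimensional Beta expectation shows it is quite manageable, its only real drawback being that it is confined to PIM, whereas the recursion argument is the one that extends to the parent-dependent framework of Theorem \ref{thm:characterisation_asyptotic_coefficients}.
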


\begin{proof}
	The proof is by induction. For $k=0$, \eqref{eq:coefficient_PIM} trivially corresponds to  $\tilde{q}_0(\n)$ defined by \eqref{eq:zero_coefficient}, 
	which satisfies Corollary \ref{thm:characterisation_asyptotic_coefficients_PIM}[\ref{thm:boundary_fit_PIM}--\ref{thm:recursion_general_PIM}], as shown in the previous sections. 
	Fix $k\geq 1$, and assume that \eqref{eq:coefficient_PIM} satisfies Corollary \ref{thm:characterisation_asyptotic_coefficients_PIM}[\ref{thm:boundary_fit_PIM}--\ref{thm:recursion_general_PIM}] for $1,\dots, k-1$.
	We have to show that \eqref{eq:coefficient_PIM} then satisfies \ref{thm:boundary_fit_PIM}--\ref{thm:recursion_general_PIM} for this fixed $k$. Note that for $k=1$, we are not making an inductive assumption, but the calculations below show directly that \eqref{eq:coefficient_PIM} satisfies Corollary \ref{thm:characterisation_asyptotic_coefficients_PIM}[\ref{thm:boundary_fit_PIM}--\ref{thm:recursion_general_PIM}] for $k=1$. 
	
	To show \ref{thm:boundary_fit_PIM} and \ref{thm:boundary_unfit_PIM}, first, for $i,j=2,\dots,d,$ note that
	\begin{align*}
		\tilde{q}_{k-1}(\e_i)&=
		\tilde{q}_{0}(\e_i)r_{k-1}(\e_i)
		- \sum_{ m=1 }^{k-1} s_{k-m} \tilde{q}_{m-1}(\e_i),
		\\
		\tilde{q}_{k-1}(\e_i+\e_j)&=
		(\theta Q_j+\delta_{ij}) \tilde{q}_{0}(\e_i)
		r_{k-1}(\e_i+\e_j)
		- \sum_{ m=1 }^{k-1} s_{k-m} \tilde{q}_{m-1}(\e_i+\e_j),
		\\
		r_{k-1}(\e_i)&=
		\frac{k}{-\theta Q_1 \theta (1-Q_1)}
		r_{k}(\e_1)
		=
		\frac{k}{[\theta (1-Q_1)+k][-\theta Q_1+k] }
		r_{k}(\e_i),
		\\
		r_{k-1}(\e_i+\e_j)&=
		\frac{k}{[\theta (1-Q_1)+1][-\theta Q_1+k] }
		r_{k}(\e_i),
		\\ 
		s_k &=  \frac{-\theta Q_1+k}{-\theta Q_1} r_{k}(\e_1)
		=
		\frac{\theta (1-Q_1)}{\theta (1-Q_1)+k } r_{k}(\e_i) .
	\end{align*}
	Therefore,
	\begin{align*}
		- 
		\sum_{i=2}^d
		\tilde{q}_{k-1}(\e_i) 
		& =
		- \sum_{i=2}^d \tilde{q}_{0}(\e_i) r_{k-1}(\e_i)
		+
		\sum_{ m=1 }^{k-1} s_{k-m} \sum_{i=2}^d\tilde{q}_{m-1}(\e_i)
		\\ & =
		-\theta (1-Q_1)   r_{k-1}(\e_2)  +s_k -s_k 
		-\sum_{ m=1 }^{k-1} s_{k-m} \tilde{q}_{m}(\e_i)
		\\ & =  r_{k}(\e_1) -\sum_{ m=0 }^{k-1} s_{k-m} \tilde{q}_{m}(\e_i)
		\\ & = \tilde{q}_k(\e_1),
	\end{align*}
	where we have used the inductive assumption that \eqref{eq:coefficient_PIM} satisfies Corollary \ref{thm:characterisation_asyptotic_coefficients_PIM}[\ref{thm:boundary_fit_PIM}] for $m=1,\dots,k-1$.
	Furthermore,
	\begin{align*}
		&- \theta 
		\tilde{q}_{k-1}(\e_i) 
		+ 
		\sum_{j=2}^d
		\tilde{q}_{k-1}(\e_i+\e_j) 
		\\ & \quad =
		\tilde{q}_{0}(\e_i)\left\{ 
		- \theta r_{k-1}(\e_i) + 
		\sum_{j=2}^d (\theta Q_j+\delta_{ij}) r_{k-1}(\e_i+\e_j)
		\right\}
		\\ & \quad\quad
		- \sum_{ m=1 }^{k-1} s_{k-m}
		\left\{ 
		- \theta 
		\tilde{q}_{m-1}(\e_i) 
		+ 
		\sum_{j=2}^d
		\tilde{q}_{m-1}(\e_i+\e_j) 
		\right\} 
		\\ & \quad =
		\tilde{q}_{0}(\e_i)\left\{ 
		- \theta r_{k-1}(\e_i) + 
		[\theta (1-Q_1)+1] r_{k-1}(\e_i+\e_2)
		+s_k
		\right\}
		- s_k \tilde{q}_{0}(\e_i) -  \sum_{ m=1 }^{k-1} s_{k-m}
		\tilde{q}_{m}(\e_i) 
		\\ & \quad =
		\tilde{q}_{0}(\e_i) r_{k}(\e_i) 
		- \sum_{ m=0 }^{k-1} s_{k-m}  \tilde{q}_{m}(\e_i)
		\\ &\quad =
		\tilde{q}_k(\e_i) ,
	\end{align*} 
	where we have used the inductive assumption that \eqref{eq:coefficient_PIM} satisfies Corollary \ref{thm:characterisation_asyptotic_coefficients_PIM}[\ref{thm:boundary_unfit_PIM}] for $m=1,\dots,k-1$.
	
	To show \ref{thm:recursion_fit_PIM}, first, for $n_1>1,$  note that 
	\begin{align*}
    \allowdisplaybreaks
		\tilde{q}_{k}((n_1-1)\e_1)&=  
		r_{k}((n_1-1)\e_1) 
		-\sum_{ m=0 }^{k-1} s_{k-m} \tilde{q}_{m}((n_1-1)\e_1) ,
		\\
		\tilde{q}_{k-1}((n_1-1)\e_1)&=  
		r_{k-1}((n_1-1)\e_1)
		-\sum_{ m=1 }^{k-1} s_{k-m} \tilde{q}_{m-1}((n_1-1)\e_1)   ,
		\\
		\tilde{q}_{k-1}((n_1-2)\e_1)&=  
		r_{k-1}((n_1-2)\e_1)
		-\sum_{ m=1 }^{k-1} s_{k-m} \tilde{q}_{m-1}((n_1-2)\e_1)  ,
		\\
		r_{k}((n_1-1)\e_1)&=  
		\frac{1-\theta Q_1-n_1+k}{1-\theta Q_1-n_1} 
		r_k(n_1\e_1) ,
		\\
		r_{k-1}((n_1-1)\e_1)&=  
		\frac{k}{[\theta(1-Q_1)+k-1][1-\theta Q_1-n_1]} 
		r_k(n_1\e_1)  ,
		\\
		r_{k-1}((n_1-2)\e_1)&=  
		\frac{k[1-\theta Q_1-n_1+k]}{[\theta(1-Q_1)+k-1][1-\theta Q_1-n_1][2-\theta Q_1-n_1]} 
		r_k(n_1\e_1)  .
	\end{align*}  
	Therefore,
	\begin{align*}
		& 
		\tilde{q}_k((n_1-1)\e_1)
		-  (n_1-2+\theta ) \tilde{q}_{k-1}((n_1-1)\e_1)
		+  (n_1-2+\theta Q_{1}) \tilde{q}_{k-1}((n_1-2)\e_1)  
		\\  & \quad  =
		r_k((n_1-1)\e_1)
		-  (n_1-2+\theta ) r_{k-1}((n_1-1)\e_1)
		+  (n_1-2+\theta Q_{1}) r_{k-1}((n_1-2)\e_1) 
		\\
		& \quad \quad 
		-\sum_{ m=0 }^{k-1} s_{k-m} \bigg\{
		\tilde{q}_m((n_1-1)\e_1)
		- \ind{m\geq 1}  (n_1-2+\theta ) \tilde{q}_{m-1}((n_1-1)\e_1)
		\\ & \quad \quad  
		+ \ind{m\geq 1} (n_1-2+\theta Q_{1}) \tilde{q}_{m-1}((n_1-2)\e_1)
		\bigg\}
		\\  & \quad  =
		r_k(n_1\e_1)- \sum_{ m=0 }^{k-1} s_{k-m} \tilde{q}_{m}(n_1\e_1)
		\\
		&\quad 
		=\tilde{q}_k(n_1\e_1),
	\end{align*} 
	where we have used the inductive assumption that \eqref{eq:coefficient_PIM} satisfies Corollary \ref{thm:characterisation_asyptotic_coefficients_PIM}[\ref{thm:recursion_fit_PIM}] for $m=1,\dots,k-1$, and for $m=0$ by construction. 
	
	Finally, to show \ref{thm:recursion_general_PIM}, 
	first, for $i=2,\dots,d,$ note that, 
	\begin{align*}
		\tilde{q}_{k}(\n-\e_i)
		&=
		\frac{1}{n_i-1+\theta Q_i} \tilde{q}_0(\n) r_k(\n-\e_i)
		-\sum_{ m=0 }^{k-1} s_{k-m} \tilde{q}_{m}(\n-\e_i),
		\\
		\tilde{q}_{k-1}(\n)
		&=
		\tilde{q}_0(\n) r_{k-1}(\n)
		-\sum_{ m=1 }^{k-1} s_{k-m} \tilde{q}_{m-1}(\n),
		\\
		\tilde{q}_{k-1}(\n-\e_1)
		&= 
		\tilde{q}_0(\n) r_{k-1}(\n-\e_1)
		-\sum_{ m=1 }^{k-1} s_{k-m} \tilde{q}_{m-1}(\n-\e_1),
		\\
		\tilde{q}_{k-1}(\n+\e_i)
		&=
		(n_i+\theta Q_i)\tilde{q}_0(\n) r_{k-1}(\n+\e_i)
		-\sum_{ m=1 }^{k-1} s_{k-m} \tilde{q}_{m-1}(\n+\e_i),
	\end{align*}	
        \begin{align*}
		r_{k}(\n-\e_i)&=  
		\frac{\norm{\n}-n_1-1+\theta(1-Q_1)}{\norm{\n}-n_1+\theta(1-Q_1)+k-1} 
		r_k(\n) ,
		\\
		r_{k-1}(\n)&=  
		\frac{k}{[\norm{\n}-n_1+\theta(1-Q_1)+k-1][-n_1-\theta Q_1 +k]} 
		r_k(\n)  ,
		\\
		r_{k-1}(\n-\e_1)&=  
		\frac{k}{[\norm{\n}-n_1+\theta(1-Q_1)+k-1][-n_1-\theta Q_1 +1]} r_k(\n)  ,
		\\
		r_{k-1}(\n+\e_i)&=  
		\frac{k}{[\norm{\n}-n_1+\theta(1-Q_1)][-n_1-\theta Q_1 +k]} r_k(\n) . 
	\end{align*}  
	Therefore,
	\begin{align*}
		\allowdisplaybreaks
		&\sum_{i=2}^d
		n_i(n_i-1 +\theta Q_{i})
		\tilde{q}_k(\n-\e_i)  
		- \norm{\n}  (\norm{\n}-1+\theta )  \tilde{q}_{k-1}(\n)
		\\  &  
		+n_1(n_1-1+\theta Q_1)\tilde{q}_{k-1}(\n-\e_1)
		+ 
		\norm{\n}     \sum_{i=2}^d     \tilde{q}_{k-1}(\n+\e_i) 
		\\  & \quad  =
		\tilde{q}_{0}(\n) 
		\bigg\{
		(\norm{\n}-n_1) r_k(\n-\e_2) 
		- \norm{\n}  (\norm{\n}-1+\theta )  r_{k-1}(\n)
		\\  &  \quad  \quad 
		+ n_1(n_1-1+\theta Q_1)r_{k-1}(\n-\e_1)
		+ 
		\norm{\n} (\norm{\n}-n_1 +\theta (1-Q_1))
		r_{k-1}(\n+\e_2) 
		\bigg\}
		\\& \quad \quad  - 
		\sum_{ m=0 }^{k-1} s_{k-m}
		\bigg\{
		\sum_{i=2}^d
		n_i(n_i-1 +\theta Q_{i})
		\tilde{q}_m(\n-\e_i)  
		-\ind{m\geq 1} \norm{\n}  (\norm{\n}-1+\theta )  \tilde{q}_{m-1}(\n)
		\\  &  
		\quad \quad 
		+\ind{m\geq 1} n_1(n_1-1+\theta Q_1)\tilde{q}_{m-1}(\n-\e_1)
		+ \ind{m\geq 1} \norm{\n}     \sum_{i=2}^d     \tilde{q}_{m-1}(\n+\e_i) 
		\bigg\}
		\\  & \quad  = \tilde{q}_0(\n) (\norm{\n}-n_1) r_k(\n) -
		\sum_{ m=0 }^{k-1} s_{k-m} (\norm{\n}-n_1) \tilde{q}_m(\n)
		\\  & \quad =
		(\norm{\n}-n_1) \tilde{q}_k(\n),
	\end{align*}
	where we have used the inductive assumption that \eqref{eq:coefficient_PIM} satisfies Corollary \ref{thm:characterisation_asyptotic_coefficients_PIM}[\ref{thm:recursion_general_PIM}] for $m=1,\dots,k-1$, and for $m=0$ by construction.
\end{proof}

\begin{remark}\label{rmk:power-series}
    One might wonder whether it is possible to let $K\to\infty$ in \eqref{eq:asymptotic_expansion} to obtain a power series converging to $q^{(\sigma)}(\n)$. In previous work \cite{jenkins2012} the analogous question is addressed for an expansion of the two-locus sampling distribution in inverse powers of the recombination parameter $\rho$:
\begin{equation*}
q^{(\rho)}(\n) = \sum_{k=0}^K \frac{q_k(\n)}{\rho^k} + \mathcal{O}(\rho^{-{K+1}}), \qquad \rho \to \infty.
\end{equation*}
In that case one can say quite a lot: $q^{(\rho)}(\n)$ can be recovered from the solution to a finite, linear system of equations with coefficients linear in $\rho$, which implies that $q^{(\rho)}(\n)$ is a \emph{rational} function of $\rho$ with a series expansion in powers of $\rho^{-1}$ whose radius of convergence can in principle be recovered by finding the poles in $\mathbb{C}$ of this rational function. Unfortunately, this argument all breaks down for selection, since the corresponding system of equations for selection, \eqref{eq:sampling_recursion}, is no longer a finite system. One can however gain some insight into the convergence properties of \eqref{eq:asymptotic_expansion} as $K\to\infty$ owing to the particular solution \eqref{eq:1F1solution} available for the special case of a diallelic, parent-independent mutation model. That solution is given in terms of the confluent hypergeometric function $_1F_1$, which can exhibit an essential singularity at $\infty$; it is known \cite[p2]{Slater} that there is no simple series solution of the form
\[
_1F_1[a,b,z] = \sum_{k=0}^\infty \frac{u_k}{z^k}.
\]
We emphasise however that divergence of \eqref{eq:asymptotic_expansion} as $K\to\infty$ should not be viewed as problematic: the philosophy of asymptotic methods is that the first few terms of a divergent series can offer superior accuracy to a convergent one \cite[p21]{Hinch}.
\end{remark}

\section{Ancestral processes}
\label{sect:genealogy}

The ancestral selection graph was introduced in \cite{Krone1997} and \cite{Neuhauser1997}
and models the ancestral history of a sample by means of branching-coalescing virtual and real lineages which evolve backwards in time (see also \cite{dk1999} for a very general construction). The genealogy of the sample is embedded in the graph and can be recovered once the ultimate ancestor is found, i.e.\ only one lineage in the graph is left.
We consider a \emph{conditional} ASG, i.e.\ with typed lineages evolving backwards rather than having types which are superimposed on lineages afterwards. The former appears in  
\cite{baake2008,Etheridge2009,Favero2021,Favero2022,fearnhead2002,slade2000simulation,slade2000MRCA,stephens2003}, among others. 
In the conditional ASG, the embedded genealogy, which corresponds to the real lineages, is constructed while the graph evolves, not afterwards as for the unconditional ASG. The virtual lineages are still needed in order to account for the effects of selection in the rates. 

We describe in detail below the block-counting process of this conditional ASG, at stationarity, which counts the number of real and virtual lineages of each type. 
As in the previous Section \ref{sect:sampling_prob_full_expansion}, we make the assumption $\sigma_2,\dots,\sigma_d=0$, discussed in Remark \ref{rmk:assumption_selection}.  
Let $\n\in\mathbb{N}^d$  and $\nuu\in\mathbb{N}^d$ be the present numbers of real and virtual lineages, respectively, and refer to a lineage of type $i$ as an \emph{$i$-lineage}. From configuration $(\n,\nuu)$, the block-counting process of the conditional ASG jumps to the following possible configurations: 
\begin{itemize}
	
	\item[$\circ$]
	$(\n,\nuu) \mapsto(\n-\e_i,\nuu)$ 
	at rate 
	$\frac{n_i(n_i-1)}{2}\frac{q\sth(\n+\nuu -\e_i)}{q\sth(\n+\nuu)}$, for $i=1,\dots,d$,
	
	(coalescence of two real $i$-lineages);
	
	\item[$\circ$]
	$(\n,\nuu) \mapsto(\n,\nuu-\e_i)$ 
	at rate 
	$\frac{\nu_i(\nu_i-1+2n_i)}{2}\frac{q\sth(\n+\nuu -\e_i)}{q\sth(\n+\nuu)}$, 
	for $i=1,\dots,d$,
	
	(coalescence of  two virtual $i$-lineages, or of a real and a virtual $i$-lineage);
	
	\item[$\circ$]
	$(\n,\nuu) \mapsto(\n-\e_i+\e_j,\nuu)$
	at rate   
	$n_i \frac{\theta P_{ji}}{2}\frac{q\sth(\n+\nuu -\e_i+\e_j)}{q\sth(\n+\nuu)}$,
	for $i,j=1,\dots,d$,
	
	(mutation of a real lineage from type $i$ to type $j$);
	
	\item[$*$]
	$(\n,\nuu) \mapsto(\n,\nuu-\e_i+\e_j)$
	at rate   
	$\nu_i \frac{\theta P_{ji}}{2}\frac{q\sth(\n+\nuu -\e_i+\e_j)}{q\sth(\n+\nuu)}$,
	for $i,j=1,\dots,d$, 
	
	(mutation of a virtual lineage from type $i$ to type $j$);
	
	\item[$\circ$]
	$(\n,\nuu) \mapsto(\n,\nuu+\e_i)$ 
	at rate
	$\norm{\n+\nuu} \frac{\sigma}{2}\frac{q\sth(\n+\nuu  +\e_i)}{q\sth(\n+\nuu)}$,
	for $i=2,\dots,d$,
	
	(selective event between an unfit incoming $i$-lineage and a parental continuing lineage);
	
	\item[$*$]
	$(\n,\nuu) \mapsto(\n,\nuu+\e_i)$ 
	at rate
	$(n_1+\nu_1) \frac{\sigma}{2}\frac{q\sth(\n+\nuu +\e_i)}{q\sth(\n+\nuu)}$,
	for $i=1,\dots,d$,
	
	(selective event between a continuing $i$-lineage and a parental fit incoming lineage);
\end{itemize}
where $q\sth$ is the sampling probability of Section \ref{sect:sampling_prob}. 
Note that the total rate,  the sum of all of the above rates, is equal to 
$\frac{1}{2}\norm{\n+\nuu}(\norm{\n+\nuu}-1+\theta +\sigma)$, by using formulas \eqref{eq:sampling_recursion} and \eqref{eq:to_reduce_recursion}, and thus explodes, as $\sigma\to\infty$. 

Let us explain the selective events above. A selection event in the ASG is associated with two lineages:
an incoming lineage (the lineage that transmits its type to the descendant only if fit) and a continuing lineage (the lineage that transmits its type to the descendant unless the incoming lineage is fit). The lineage that transmits its type to the descendant is called parental. 
Now, pick a lineage uniformly at random from the current configuration $(\n,\nuu)$, say of type $j$ (with probability proportional to $n_j+\nu_j$).  
Say that this is the type of the descendant in a selection event.
Two possible selection events may have led to a descendant of type $j$. 
The first involves a parental (fit or unfit) continuing $j$-lineage and an incoming unfit $i$-lineage; this happens at rate $(n_j+\nu_j) \frac{\sigma}{2}\frac{q\sth(n+\nu  +e_i)}{q\sth(n+\nu)}\ind{i\neq 1}$, 
which, summing over $j$ corresponds to the first of the selective jumps in the list above. 
The second possible event, which can only happen if $j=1$, involves a parental incoming $j$-lineage and a (fit or unfit) continuing $i$-lineage; this happens at rate 
$(n_1+\nu_1)\frac{\sigma}{2}\frac{q\sth(n+\nu +e_i)}{q\sth(n+\nu)}$, which appears as a second kind of selective jump in the list above. 
In both cases a new virtual $i$-lineage appears. 
See \cite{fearnhead2002} for more details. 

We can further `reduce' the conditional ASG, following \cite{baake2008,fearnhead2002,slade2000MRCA}, by modifying the jumps marked by $*$ above.  
More precisely, we discard any virtual lineage that undergoes a mutation event, so that jumps to $(\n,\nuu-\e_i+\e_j)$ become jumps to $(\n,\nuu-\e_i)$, and we eliminate selective events where the parental lineage is a fit incoming branch (the second kind of selective events described above). 
This means that virtual lineages of the fit type can be entirely omitted. If the initial configuration does not contain virtual lineages, i.e.\ the initial configuration is of the form $(\n,\boldsymbol{0})$, because of the reduction no virtual lineages of the fit type are created while the process evolves and we have $\nu_1 = 0$ at all times.

To summarise, the block-counting process of the reduced conditional ASG is the continuous-time jump process on $\mathbb{N}^d \times \mathbb{N}^d$
with the following jump rates, conditioned on the current configuration $(\n,\nuu)$:
	\begin{align}
	\nonumber
	&\rho\sth_{\n,\nuu}(\n-\e_i,\nuu)
	=\frac{n_i(n_i-1)}{2}\frac{q\sth(\n+\nuu -\e_i)}{q\sth(\n+\nuu)}, \quad i=1,\dots,d ; 
	\\ \nonumber
	& \rho\sth_{\n,\nuu}(\n,\nuu-\e_i)
	= \frac{\nu_i(\nu_i-1+2n_i)}{2}\frac{q\sth(\n+\nuu -\e_i)}{q\sth(\n+\nuu)}
	\\ \nonumber & \quad\quad\quad\quad\quad\quad\quad\quad
	+
	\sum_{j=1}^d\nu_i \frac{\theta P_{ji}}{2}\frac{q\sth(\n+\nuu -\e_i+\e_j)}{q\sth(\n+\nuu)},
	  i=2,\dots,d;
	\\ \nonumber
	&\rho\sth_{\n,\nuu}(\n-\e_i+\e_j,\nuu)
	=n_i \frac{\theta P_{ji}}{2}\frac{q\sth(\n+\nuu -\e_i+\e_j)}{q\sth(\n+\nuu)},
	\quad i,j=1,\dots,d ;
	\\ \label{eq:rates_reduced_conditional_ASG}
	&\rho\sth_{\n,\nuu}(\n,\nuu+\e_i)
	= \norm{\n+\nuu} \frac{\sigma}{2}\frac{q\sth(\n+\nuu  +\e_i)}{q\sth(\n+\nuu)},
	\quad i=2,\dots,d.
	\end{align}
Under the PIM assumption, it is proven \cite{slade2000MRCA,fearnhead2002} that the reduction does not affect the distribution of the genealogy of a sample, and thus the reduced graph encodes all the relevant information. 
For general mutation models this is not true. Nevertheless, the process with jump rates given by \eqref{eq:rates_reduced_conditional_ASG} still defines a valid Markov jump process whose properties we can analyse. We should not expect this process to necessarily embed the true genealogy (for non-PIM models). However, we now proceed to an asymptotic analysis without assuming PIM because this leads to limiting processes that have interesting properties---more precisely, as $\sigma\to\infty$ we find that the process described by \eqref{eq:rates_reduced_conditional_ASG} converges to a limiting process which is dual to the limiting diffusion of Section \ref{sect:diffusions}, as we show below.

Note that the rate of the eliminated selective events is $\sum_{i=1}^d (n_1+\nu_1) \frac{\sigma}{2}\frac{q\sth(\n+\nuu +\e_i)}{q\sth(\n+\nuu)} =  (n_1+\nu_1) \frac{\sigma}{2} $, by  formula \eqref{eq:to_reduce_recursion}. Therefore, the  total rate of the reduced process is equal to 
$$\frac{1}{2}\norm{\n+\nuu}(\norm{\n+\nuu}-1+\theta )+\frac{1}{2} \sigma(\norm{\n+\nuu} - n_1 -\nu_1).$$
The reduced total rate above can still explode as $\sigma\to\infty$, namely for those configurations with $\norm{\n+\nuu} - n_1 -\nu_1>0 $. 
However, for configurations without unfit lineages, i.e.\ $\norm{\n+\nuu}= n_1+\nu_1$, the rate above does not depend on sigma and thus it does not explode. 
This is what allows the following asymptotic analysis, which would not be possible for the unreduced process. This is further discussed in Section \ref{sect:genealogy_interpretation}.

\subsection{Asymptotic analysis of the rates}
The analysis of this section focuses on the asymptotic behaviour of the jump rates of the reduced, conditional ASG, as in \cite{wakeley2008}, rather than on a rigorous proof of convergence of the processes. 
A rigorous proof seems to be possible by using the separation of timescales arguments of \cite{mohle}. Here we simply generalise the argument of \cite{wakeley2008} to the multi-allele case in order to obtain a process that is needed for the duality relationship of the next section.

From the asymptotic expansion of the sampling probability of Section \ref{sect:sampling_prob}, we have
\begin{align*}
	&q\sth(\n+\nuu) = \frac{1}{\sigma ^{\norm{\n+\nuu}-(n_1+\nu_1)}} \left[ \tilde{q}_0(\n+\nuu)+\mathcal{O}\left(\frac{1}{\sigma} \right) \right],
	\\
	& q\sth(\n+\nuu-\e_i) = \frac{1}{\sigma ^{\norm{\n+\nuu}-(n_1+\nu_1)- 1+\delta_{1i}}} 
	\left[ \tilde{q}_0(\n+\nuu-\e_i)+\mathcal{O}\left(\frac{1}{\sigma} \right) \right],
	\\
	&q\sth(\n+\nuu-\e_i+\e_j) 
	=\frac{1}{\sigma ^{\norm{\n+\nuu}-(n_1+\nu_1)+\delta_{1i}-\delta_{1j}}} 
	\left[ \tilde{q}_0(\n+\nuu-\e_i+\e_j)+\mathcal{O}\left(\frac{1}{\sigma} \right) \right],
	\\
	& q\sth(\n+\nuu+\e_i) = \frac{1}{\sigma ^{\norm{\n+\nuu}-(n_1+\nu_1)+ 1-\delta_{1i}}} 
	\left[ \tilde{q}_0(\n+\nuu+\e_i)+\mathcal{O}\left(\frac{1}{\sigma} \right) \right].
\end{align*}
Therefore, recalling formula \eqref{eq:zero_coefficient} for $\tilde{q}_0$,
	\begin{align*}
	&\frac{q\sth(\n+\nuu -\e_i)}{q\sth(\n+\nuu)}
	=
	\begin{cases}
	1 + \mathcal{O}\left(\frac{1}{\sigma} \right),
	&  i= 1 ;\\ 
	\sigma \frac{1}{n_i +\nu_i -1 +\theta P_{1i}}  + \mathcal{O}\left(1 \right),
	&  i =2,\dots, d ;
	\end{cases}
	\\
	&\frac{q\sth(\n+\nuu -\e_i+\e_j)}{q\sth(\n+\nuu)}
	=
	\begin{cases}
	1,  
	& i=j= 1;\\
	\frac{1}{\sigma} (n_j +\nu_j+ \theta P_{1j} )  
	+ \mathcal{O}\left(\frac{1}{\sigma^2} \right),
	& i=1,\, j =2,\dots,d ;
	\\
	\sigma \frac{1}{n_i +\nu_i -1 +\theta P_{1i}} 
	+ \mathcal{O}\left(1\right),
	& i=2,\dots,d,\, j =1 ;
	\\
	\frac{n_j +\nu_j-\delta_{ij}+ \theta P_{1j}}{n_i +\nu_i -1 +\theta P_{1i}} 
	+ \mathcal{O}\left(\frac{1}{\sigma} \right),
	& i,j=2,\dots,d ;
	\end{cases}
	\\
	&\frac{q\sth(\n+\nuu +\e_i)}{q\sth(\n+\nuu)}
	= 
	\begin{cases}
	1  + \mathcal{O}\left(\frac{1}{\sigma} \right), 
	&  i= 1 ;\\ 
	\frac{1}{\sigma} (n_i +\nu_i+ \theta P_{1i} ) 
	+ \mathcal{O}\left(\frac{1}{\sigma^2} \right),
	&  i =2,\dots, d . 
	\end{cases}
	\end{align*}
It is now straightforward to write asymptotic expressions for the rates in \eqref{eq:rates_reduced_conditional_ASG} as follows
\begin{align*}
	&\rho\sth_{\n,\nuu}(\n-\e_i,\nuu)
	=
	\begin{cases}
		\frac{n_1(n_1-1)}{2} + \mathcal{O}\left(\frac{1}{\sigma} \right),
		& i=1; \\
		\sigma\frac{n_i(n_i-1)}{2} \frac{1}{n_i +\nu_i -1 +\theta P_{1i}}  + \mathcal{O}\left(1 \right),
		& i=2,\dots,d ;
	\end{cases} 
	\\
	& \rho\sth_{\n,\nuu}(\n,\nuu-\e_i)
	=\sigma \frac{\nu_i(\nu_i-1+2n_i+\theta P_{1i})}{2}\frac{1}{n_i +\nu_i -1 +\theta P_{1i}} 
	+ \mathcal{O}\left(1 \right),
	\quad i=2,\dots,d;
	\\
	&\rho\sth_{\n,\nuu}(\n-\e_i+\e_j,\nuu)
	=
	\begin{cases}
		n_1 \frac{\theta P_{11}}{2}  + \mathcal{O}\left(\frac{1}{\sigma} \right) ,
		& i=j= 1;\\
		\mathcal{O}\left(\frac{1}{\sigma} \right),
		& i=1,\, j =2,\dots,d ;
		\\
		\sigma n_i \frac{\theta P_{1i}}{2} \frac{1}{n_i +\nu_i -1 +\theta P_{1i}} 
		+ \mathcal{O}\left(1\right),
		& i=2,\dots,d,\, j =1 ;
		\\
		n_i \frac{\theta P_{ji}}{2} \frac{n_j +\nu_j-\delta_{ij}+ \theta P_{1j}}{n_i +\nu_i -1 +\theta P_{1i}} 
		+ \mathcal{O}\left(\frac{1}{\sigma} \right),
		& i,j=2,\dots,d;
	\end{cases}
	\\
	&\rho\sth_{\n,\nuu}(\n,\nuu+\e_i)
	= 
	\frac{ \norm{\n+\nuu}}{2}  (n_i +\nu_i+ \theta P_{1i} ) 
	+ \mathcal{O}\left(\frac{1}{\sigma} \right),
	\quad  i =2,\dots,d. 
\end{align*}

If there are no unfit lineages in a configuration $(\n,\nuu)$, i.e.\ $\norm{\n+\nuu}= n_1+\nu_1$, we have previously noticed that the total jump rate does not explode. In this case, the only possible jumps in the strong selection limit are given by the rates 
\begin{align}
	\nonumber
	&\rho\sth_{\n,\nuu}(\n-\e_1,\nuu)
	\xrightarrow[\sigma\to\infty]{}
	\frac{n_1(n_1-1)}{2},
	\\  \label{eq:rates_limits}
	&\rho\sth_{\n,\nuu}(\n,\nuu+\e_i)
	\xrightarrow[\sigma\to\infty]{}
	\frac{ \norm{\n+\nuu}}{2}  \theta P_{1i} ,
	\quad  i =2,\dots, d . 
\end{align}
Instead, if  $\norm{\n+\nuu}> n_1+\nu_1$, we divide the rates $\rho\sth_{\n,\nuu}$ by $\sigma$, which corresponds to rescaling time by a factor $\sigma$; then we obtain that the only possible jumps in the limit are given by the rates
\begin{align}
	\nonumber
	&\frac{1}{\sigma} \rho\sth_{\n,\nuu}(\n-\e_i,\nuu)
	\xrightarrow[\sigma\to\infty]{}
	\frac{1}{2} \frac{n_i(n_i-1)}{n_i +\nu_i -1 +\theta P_{1i}} ,
	\quad i=2,\dots,d ;
	\\ \nonumber
	& \frac{1}{\sigma}  \rho\sth_{\n,\nuu}(\n,\nuu-\e_i)
	\xrightarrow[\sigma\to\infty]{}
	\frac{1}{2}\frac{\nu_i(\nu_i-1+2n_i+\theta P_{1i})}{n_i +\nu_i -1 +\theta P_{1i}} ,
	\quad i=2,\dots,d;
	\\ \label{eq:rates_limits_scaled}
	&\frac{1}{\sigma} \rho\sth_{\n,\nuu}(\n-\e_i+\e_1,\nuu)
	\xrightarrow[\sigma\to\infty]{}
	\frac{1}{2} \frac{n_i\theta P_{1i}}{n_i +\nu_i -1 +\theta P_{1i}} ,
	\quad i=2,\dots,d  .
\end{align}

\subsection{Interpretation of the ancestral process  under strong selection}
\label{sect:genealogy_interpretation}

\cite{wakeley2008} performs a similar asymptotic analysis in the case of two alleles, calling the two processes in \eqref{eq:rates_limits} and \eqref{eq:rates_limits_scaled} the `slow' and `fast' processes respectively. We now follow and generalise \cite{wakeley2008} to provide an interpretation of the asymptotic analysis of the rates in the multi-allele case; this is not a trivial extension because two-allele models can always be parametrised to obey the PIM assumption. 
Recall that under PIM the reduced graph is equivalent to the original graph, but not in general. Anyway, we keep the analysis general and study the limiting ancestral process also when the PIM assumption is not met, as the processes we obtain have an interesting property (duality) which we discuss in the next section. 

To construct the ancestral history of a sample in the limit under strong selection, we naturally start with no virtual lineages. 
If there are unfit alleles in the sample, 
considering \eqref{eq:rates_limits_scaled}, 
proceed by coalescing two unfit $i$-lineages, $i=2,\dots,d$, at rate  
\begin{align*}
	\frac{1}{2} \frac{n_i(n_i-1)}{n_i  -1 +\theta P_{1i}} ,
\end{align*}
and by mutating an unfit $i$-lineage, $i=2,\dots,d$, into a fit lineage at rate
\begin{align*}
	\frac{1}{2} \frac{n_i\theta P_{1i}}{n_i  -1 +\theta P_{1i}}  ,
\end{align*}
until no unfit lineage is left. 
This process acts on a faster timescale, as its rates are obtained by rescaling time. 
The process corresponds to a Kingman coalescent on the unfit lineages. Because these unfit lineages reside within a population that is overwhelmingly comprised of the fit type, the only mutation events of relevance on this timescale are (backwards in time) mutations back to the fit type. 

Once there are no unfit lineages left, and still no virtual lineages, considering that \eqref{eq:rates_limits} applies, now proceed on the original timescale by coalescing two fit lineages at rate 
$
\frac{n_1(n_1-1)}{2},
$
and by introducing a virtual unfit $i$-lineage $i=2,\dots,d$, at rate 
$
\frac{n_1 \theta P_{1i}}{2} , 
$
until a virtual lineage is created. 
The newly created virtual lineage of type $i$ is immediately removed: reverting to the fast timescale,  the removal happens at rate $\frac{1}{2}$, obtained from \eqref{eq:rates_limits_scaled} with $\n=n_1\e_1$ and $\nuu=\e_i$.
Therefore, the addition of virtual lineages, which would be immediately be removed, can be entirely omitted on the slow timescale and the slow process simply corresponds to the block-counting process of a Kingman coalescent on the fit lineages, which is simply a death process with death rate  $
\frac{n_1(n_1-1)}{2}.
$

This interpretation of the ancestral process in the large-selection limit tells us what are the unlikely events that lead to the presence of any unfit alleles in the sample.
For example, with probability of order $\frac{1}{\sigma}$ we observe one unfit allele in the sample which must have come through a recent (most recent step back in the genealogical process) mutation from the fit type. With probability of order $\frac{1}{\sigma^2}$ we observe a sample containing two copies of unfit alleles. If the two unfit copies are of the same type,  
they must have come from a recent coalescence (one step back) and a mutation (two steps back) from the fit type; if they are of different types, they must have come from two recent (last two steps back) independent mutations from the fit type. The  probabilities are approximated by  the leading term $\tilde{q}_0(\n)\frac{1}{\sigma^{\norm{\n}-n_1}}$  of the asymptotic expansion \eqref{eq:asymptotic_expansion} of the sampling probability described in Section  \ref{sect:sampling_prob}.

\section{Duality}
\label{sect:duality}

It is well known \cite{barbour2000,Etheridge2009,Favero2021} that there is a normalised moment  duality relationship between the WF diffusion  $\X\sth$ of Section \ref{sect:diffusions} and the block-counting process of the (unreduced) conditional ASG of Section \ref{sect:genealogy}, which we denote here by   $\N\sth=\{\N\sth(t)\}_{t \geq 0 }$. 
This means that, 
for all $\x\in\Delta, \n,\nuu \in \mathbb{N}^d$,
\begin{align*}
	\E{H\sth(\X\sth(t),\n+\nuu)\mid \X\sth(0)=\x}
	= 
	\E{H\sth(\x,\N\sth(t))\mid \N\sth(0)=(\n,\nuu)},
\end{align*}
where $H\sth$ is the normalised moment duality function 
$$
H\sth(\x,\n+\nuu)
=\frac{F(\x,\n+\nuu)}{\E{F(\X^{(\sigma)}(\infty),\n+\nuu )}}
=  \frac{1}{q\sth(\n+\nuu)} \prod_{i=1}^d x_i^{n_i+\nu_i }  ,
$$
with $F$ being the moment duality function defined in \eqref{eq:sampling_diffusion} and $q\sth$ being the sampling probability of Section \ref{sect:sampling_prob}. 
In other words, one can transform the expectation with respect to $\X\sth$ on the left-hand side, which evolves forwards in time from $\x$, into an expectation with respect to $\N\sth$ on the right-hand side, which evolves backwards in time from $(\n,\nuu)$, by means of the duality function $H\sth$.

Note that, in the duality above,  the duality function groups together real and virtual lineages to match the ancestral process to the diffusion which does not differentiate between the notions of real and virtual.
However, when the conditional ASG is reduced,  as described in the previous section, 
real and virtual lineages are treated differently and
the distinction between real and virtual lineages becomes crucial, thus the duality above cannot be carried over to the reduced process. 
Nevertheless, we show in this section that the `lost' duality can be recovered in the strong selection limit.

\subsection{Asymptotic duality}

Denote by $\M=\{\M(t)\}_{t\geq 0}$ the fast process on $\n$ taking values in $\mathbb{N}^d$, when $\nuu = \boldsymbol{0}$, as defined in \eqref{eq:rates_limits_scaled}, or equivalently defined by its generator 
\begin{align}
	\label{eq:generator_fast_ancestral_limit}
	\mathcal{G} h(\n)=
	\sum_{i=2}^d
	\frac{1}{2}\frac{n_i(n_i-1)}{n_i -1 +\theta P_{1i}}
	\left[h(\n-\e_i)-h(\n)\right]
	+
	\sum_{i=2}^d
	\frac{1}{2}\frac{n_i \theta P_{1i}}{n_i  -1 +\theta P_{1i}}
	\left[h(\n-\e_i+\e_1)-h(\n)\right].
\end{align}
This process acts when there are some unfit lineages and no virtual lineages. Because of the lack of virtual lineages, which are the obstacle to duality, it seems it might be possible to find a process that is dual to $\M$. Uncovering it is the goal of this section. 
In fact, we prove below that the dual process is precisely the limiting CBI diffusion of Section \ref{sect:diffusions}. 

First, we derive our candidate duality function.  
Because the original pre-limiting duality function $H\sth$ contains $\X\sth(\infty)$, we can  guess from Section \ref{sect:stationary} that the useful scaling is the one that led to the CBI diffusion, which does not become degenerate at stationarity (unlike the Gaussian diffusion), i.e.\ $\x=\sigma(\z-\e_1)$. Then
\begin{align*}
	F(\x,\n)
	=
	\left(1+\frac{1}{\sigma}z_1\right)^{n_1}
	\prod_{i=2}^d \left(\frac{1}{\sigma}z_i\right)^{n_i}
	= 
	\frac{1}{\sigma^{\norm{\n}-n_1}}
	\left\{\prod_{i=2}^d z_i^{n_i}
	+\mathcal{O}\left(\frac{1}{\sigma}\right)\right\},
\end{align*}
as in the proof of Proposition \ref{thm:gamma_sampling}, 
and, by the asymptotic expansion of Section \ref{sect:sampling_prob} for 
$
q\sth(\n),
$
we obtain our candidate duality function as a limit of the duality function $H\sth$; that is,
\begin{align}
	\label{eq:duality_fct_limit}
	H(\z,\n)=  
	\frac{1}{ \tilde{q}_0(\n)}
	\prod_{i=2}^d z_i^{n_i}
\end{align}
where $\tilde{q}_0(\n)$,  defined in \eqref{eq:zero_coefficient}, is the leading order non-zero coefficient of the sampling probability which we have already seen to be consistent with the leading order of its Gamma approximation under the CBI diffusion. 
Now it is possible to prove the following. 

\begin{theorem}
	\label{thm:duality}
	The ancestral process $\M$ with infinitesimal generator $\mathcal{G}$, defined in \eqref{eq:generator_fast_ancestral_limit},
	and   the diffusion $\Z$ with infinitesimal  generator  $\tilde{\mathcal{L}}_0$, defined in \eqref{eq:generator_CBI},
	are dual with respect to the duality function $H$, defined in \eqref{eq:duality_fct_limit}. That is, 
	for all $\z\in \mathbb{R}_{\leq 0 } \times \mathbb{R}^{d-1}_{\geq 0 }$  with $ \sum_{i=1}^dz_i=0 $, and for all $\n\in\mathbb{N}^d$,
	\begin{align*}
		\E{H(\Z(t),\n)\mid \Z(0)=\z}
		= 
		\E{H(\z,\M(t))\mid \M(0)=\n}.
	\end{align*}
	
\end{theorem}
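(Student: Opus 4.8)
The plan is to establish the duality through the classical generator (or \emph{cross}) relation: I will show that for every fixed $\n\in\mathbb{N}^d$ and every admissible $\z$,
\[
(\tilde{\mathcal{L}}_0 H(\cdot,\n))(\z) = (\mathcal{G} H(\z,\cdot))(\n),
\]
and then invoke a standard duality theorem, e.g.\ \cite[Thm.\ 4.4.11]{Ethier1986}, to upgrade this pointwise identity of generators to the integrated duality in the statement.

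First I would compute the left-hand side. Since $H(\cdot,\n)=\tilde{q}_0(\n)^{-1}\prod_{i=2}^d z_i^{n_i}$ is a constant multiple of a monomial in $z_2,\dots,z_d$, applying $\tilde{\mathcal{L}}_0$ from \eqref{eq:generator_CBI} is a direct differentiation. Using that $z_i\,\partial^2_{z_i}$ and $(\theta P_{1i}-z_i)\,\partial_{z_i}$ each map the monomial of $\n$ into a constant multiple of the monomial of $\n-\e_i$ (so that all resulting expressions are again polynomials, with no singularity at $z_i=0$), I obtain
\[
(\tilde{\mathcal{L}}_0 H(\cdot,\n))(\z)
= \frac{1}{2}\sum_{i=2}^d \frac{n_i(n_i-1+\theta P_{1i})}{\tilde{q}_0(\n)}\prod_{j=2}^d z_j^{(\n-\e_i)_j}
- \frac{1}{2}(\norm{\n}-n_1)\, H(\z,\n).
\]

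Second, I would simplify the right-hand side using two features of $H$. Both the monomial and the coefficient $\tilde{q}_0(\n)=\prod_{i=2}^d \Gamma(\theta P_{1i}+n_i)/\Gamma(\theta P_{1i})$ depend only on the unfit coordinates, so $H(\z,\n-\e_i+\e_1)=H(\z,\n-\e_i)$. This collapses the coalescence and mutation-to-fit sums in $\mathcal{G}$ into a single sum, and crucially the denominator $n_i-1+\theta P_{1i}$ in the rates cancels against $n_i(n_i-1)+n_i\theta P_{1i}=n_i(n_i-1+\theta P_{1i})$, leaving $(\mathcal{G} H(\z,\cdot))(\n)=\tfrac12\sum_{i=2}^d n_i[H(\z,\n-\e_i)-H(\z,\n)]$. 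Rewriting $H(\z,\n-\e_i)$ via the recursion $\tilde{q}_0(\n-\e_i)=\tilde{q}_0(\n)/(n_i-1+\theta P_{1i})$, which follows from \eqref{eq:zero_coefficient}, reproduces exactly the left-hand side, verifying the cross relation.

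It remains to pass from the generator identity to the integrated duality, which I would do via the usual argument: for independent copies of $\Z$ and $\M$, the map $s\mapsto \E{H(\Z(s),\M(t-s))}$ has $s$-derivative equal to $\E{(\tilde{\mathcal{L}}_0 H(\cdot,\M(t-s)))(\Z(s))} - \E{(\mathcal{G} H(\Z(s),\cdot))(\M(t-s))}$, which vanishes by the cross relation, so the map is constant and equating its values at $s=0$ and $s=t$ gives the claim. The main obstacle is justifying the differentiation under the expectation, i.e.\ the domain and integrability conditions; these are, however, mild here. The ancestral process $\M$ never increases the total number of lineages, since coalescence and mutation-to-fit both reduce the count of unfit lineages, so $\M$ evolves in the finite set of configurations reachable from $\n$ and $\mathcal{G}$ is a bounded operator, making the $\M$-side automatic. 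On the other side, each component $Z_i$, $i=2,\dots,d$, is a CIR/CBI and hence affine process whose polynomial moments solve linear ODEs and are therefore locally bounded in time; this controls the polynomial test functions $H(\cdot,\m)$ and $\tilde{\mathcal{L}}_0 H(\cdot,\m)$ uniformly as $\m$ ranges over the finite reachable set, supplying the integrability needed to legitimise the interchange and complete the proof.
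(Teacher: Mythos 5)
Your proposal is correct and follows essentially the same route as the paper: both verify the generator cross relation $\mathcal{G} H(\z,\cdot)(\n) = \tilde{\mathcal{L}}_0 H(\cdot,\n)(\z)$ for the same duality function and then pass to the integrated identity, the only (cosmetic) differences being that you keep the computation in polynomial form rather than dividing by $z_i$ as the paper does, and you spell out the interchange-of-limits justification (finite reachable set for $\M$, moment bounds for the CIR components) where the paper simply cites \cite[Prop.\ 1.2]{Jensen2014} and \cite[p.115]{liggett2010}.
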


\begin{proof}
The proof is straightforward since we have already derived the correct candidate duality function. 
By e.g.\ \cite[Prop. 1.2]{Jensen2014} or \cite[p.115]{liggett2010}, it is enough  to verify the duality relationship on the corresponding generators, i.e.\  
    \begin{align}
    \mathcal{G} H(\z, \cdot) (\n)
    &=
    \tilde{\mathcal{L}}_0 H(\cdot, \n) (\z), \label{eq:duality}
    \end{align}
where $H(\z, \cdot)$  and   $H(\cdot, \n) $ are the projections of $H$ on  $\mathbb{N}^d$  and $\mathbb{R}_{\leq 0 } \times \mathbb{R}^{d-1}_{\geq 0 }$,  which belong to the domains of the generators $\mathcal{G}$ and $\tilde{\mathcal{L}}_0$,  respectively.
Using that
    \begin{align*}
    H(\z, \n-\e_i)= H(\z,\n-\e_i+\e_1)= \frac{n_i -1 +\theta P_{1i}}{z_i} H(\z,\n), 
    \quad i= 2,\dots, d ,
    \end{align*}
we obtain 
    \begin{align*}
    \mathcal{G} H(\z, \cdot) (\n)
    &=
    \sum_{i=2}^d
    \left[ 
    \frac{1}{2}\frac{n_i(n_i-1)}{n_i -1 +\theta P_{1i}} 
    +\frac{1}{2} \frac{n_i \theta P_{1i}}{n_i  -1 +\theta P_{1i}}
    \right]
    \left[ \frac{n_i -1 +\theta P_{1i}}{z_i} -1 \right] H(\z,\n)
    \\
    &=
    \frac{1}{2}
    \sum_{i=2}^d
    \frac{n_i}{z_i}  (n_i  -1 +\theta P_{1i} -z_i )
    H(\z,\n). 
    \\
    \end{align*}
Furthermore, noting that
    \begin{align*}
    \frac{\partial}{\partial z_i } H(\z,\n)= 
    \frac{n_i}{z_i} H(\z,\n), \quad 
    \frac{\partial^2}{\partial z_i^2 } H(\z,\n)=
    \frac{n_i(n_i-1)}{z_i^2} H(\z,\n),
    \quad i= 2,\dots, d,
    \end{align*}
we obtain
    \begin{align*}
    \tilde{\mathcal{L}}_0H(\cdot, \n) (\z)
    &=
    \frac{1}{2}
    \sum_{i=2}^d z_i  \frac{\partial^2}{\partial z_i^2 } H(\z,\n)
    +  \sum_{i=2}^d  \frac{1}{2}[\theta P_{1i} -z_i ] \frac{\partial}{\partial z_i } H(\z,\n)
    \\
    &=
    \frac{1}{2}
    \sum_{i=2}^d \frac{n_i(n_i-1)}{z_i} H(\z,\n)
    + \frac{1}{2} \sum_{i=2}^d \frac{n_i}{z_i} [\theta P_{1i} -z_i ]  H(\z,\n)
    \\
    &=
    \frac{1}{2}
    \sum_{i=2}^d
    \frac{n_i}{z_i}  (n_i  -1 +\theta P_{1i} -z_i )
    H(\z,\n),
    \end{align*}
thus verifying \eqref{eq:duality}.
\end{proof}

\begin{remark}
	Since the components, except for the first one, of the dual processes are independent, i.e.\ the CBI components $Z_2,\dots,Z_d$ are independent of each other and the components $M_2,\dots,M_d$ of the ancestral process are independent of each other, the duality holds component-wise.
	More precisely, for $i=2,\dots,d$, $Z_i$ and $M_i$ are dual with respect to the duality function 
	\begin{align*}
		H_i(z_i,n_i)=
		\frac{\Gamma(\theta P_{1i})}{\Gamma(\theta P_{1i}+n_i)} z_i^{n_i}.
	\end{align*}
	That is,  for all $z_i\in \mathbb{R}_{\geq 0 }$, and for all  $n_i\in\mathbb{N}$, $i=2,\dots, d$,
	\begin{align*}
		\E{H_i(Z_i(t),n_i)\mid Z_i(0)=z_i}
		= 
		\E{H(z_i,M_i(t))\mid M_i(0)=n_i}.
	\end{align*}
	Note that $M_i$ is a pure death process and its death rate, from \eqref{eq:generator_fast_ancestral_limit}, is linear. A different dual for $Z_i$ is developed in \cite{Ruggiero2014} in which one component of the dual process is a birth-death process also with linear death rate.
\end{remark}
To complete the asymptotic duality framework, now consider the slow process on $\n$ taking values in $\mathbb{N}^d$, as defined in \eqref{eq:rates_limits}, which corresponds to a death process with rates $\frac{n_1(n_1-1)}{2}$ and acts when there are no virtual nor unfit lineages. In this case, the duality function $H$ becomes constant, equal to $1$.
With respect to this constant duality function, trivially, the slow process is dual to the process which is fixed at the equilibrium point $\e_1$ and which has infinitesimal generator equal to $0$. This is a very special case of the well-known duality between Kingman's coalescent and the WF diffusion, in which the population is composed of only one type.

Finally, it is interesting to notice that the asymptotic dualities of this section hold in general, without need for the PIM assumption. In the pre-limit both the duality and (in the non-PIM case) equivalence with the true genealogical distribution are lost with the reduction of the graph. We showed that in the strong selection limit this duality is restored, whereas it is not straightforward to see whether the equivalence with the true genealogical distribution is also restored. This question would require further, non-trivial, analysis and could benefit from the results of this and the previous section, which motivates  why we considered the reduced graph for general mutation models without assuming PIM.





\begin{acks}
We would like to thank Jere Koskela for valuable discussions in the preliminary phase of this work. 
We are grateful to the two anonymous reviewers whose thorough comments improved the quality of this paper. 
MF acknowledges the support of the Knut and Alice Wallenberg Foundation (Program for Mathematics, grant 2020.072). 
\end{acks}

\appendix
\section{Appendix $\cdot$ Limits of Wright--Fisher models}

It is well known that the WF diffusion arises as the limit of a sequence of WF models as the population size grows to infinity, under some assumptions guaranteeing that the strength of the genetic drift is comparable to, or stronger than, the strength of mutation and selection (if the genetic drift is weaker, a deterministic limit is obtained). Therefore, studying the asymptotic behaviour of the WF diffusion can also be achieved by taking a step back and studying the asymptotic behaviour of the corresponding pre-limiting WF model. While in the rest of the paper we work with the WF diffusion, here we focus on WF models and their convergence in order to facilitate the comparison with classical results in the literature, e.g.\ \cite{Ethier1988,Feller1951,Nagylaki1986,Nagylaki1990,Norman1972,Norman1974,Norman1975,Norman1975b}. Our aim is to bring together a few disparate convergence results into a single, self-contained reference.

\subsection{The Wright--Fisher model with selection and mutation}
Consider a homogeneous, panmictic population of a fixed size $N$ evolving in discrete generations.  
Each individual carries one of a finite number $d$ of possible genetic types, or alleles. Reproduction takes place as follows: 
Each individual in a generation chooses an individual from the previous generation as their parent. The chosen parent is of type $i$ with a probability that is proportional to the frequency of individuals of type $i$ in the parental generation, multiplied by $1+s_i\Nth$. The parameter $s_i\Nth$ represents the selective advantage, if positive, or disadvantage, if negative, of an individual of type $i$. The offspring inherits type $i$ from the parent with probability $ u_{ii}\Nth $, or mutates to type $j$ with probability $ u_{ij}\Nth $. More concisely, letting  $\X\Nth (k)$ be the vector of frequencies in generation $k$,
\begin{align*}
	& N \X\Nth (k+1) \mid N \X\Nth (k)= N \x \sim \text{Multinomial}( N, q\Nth(\x) ),
	\\
	& \text{with} \quad q_i\Nth(\x)= \sum_{j=1}^d\tilde{q}_j\Nth(\x) u_{ji}\Nth,
	\quad \text{and} \quad
	\tilde{q}_i\Nth(\x)= \frac{(1+s_i\Nth)x_i}{\sum_{j=1}^d(1+s_j\Nth)x_j} .
\end{align*}
The WF model consists of the sequence of frequencies evolving forwards in time, $\{\X\Nth(k)\}_{k\in \mathbb{N}}$. See e.g.\ \cite[Ch.10]{Ethier1986}.

\subsection{Tools for convergence}
To study the convergence of $\{\X\Nth(k)\}_{k\in\mathbb{N}}$, as $N\to\infty$,
or of a general  (discrete-time and) time-homogeneous Markov chain,
the central objects of interest are the means and the covariances of its infinitesimal increments, as sketched in the following. 
Consider the semigroup $T\Nth$ of $\X\Nth$, i.e.\ 
$T\Nth f(\x) = \E{f(\X\Nth(k+1))\mid \X\Nth(k)=\x }$, and its discrete-time generator 
$A\Nth= \alpha_N (T\Nth - I )$, where $\alpha_N$ is the time scaling (to be chosen appropriately and satisfying $\alpha_N\to\infty$ as $N\to\infty$) and $I$ is the identity operator. 
By classical results \cite[Thm 6.5, Ch 1 and Thm 2.6, Ch 4]{Ethier1986}, 
proving that $A\Nth$ converges to  $A$ as $N\to\infty$
(in the sense of \cite[Thm 6.5, Ch 1]{Ethier1986})
implies  
that $(T\Nth)^{\lfloor \alpha_N t \rfloor}\to T(t)$ 
and thus 
that $\{\X\Nth(\lfloor \alpha_N t \rfloor)\}_{t \geq 0} \xrightarrow[]{d} \{\X( t )\}_{t \geq 0}$, where 
$\X$ is a continuous-time Markov process with infinitesimal generator $A$ and semigroup $T$.
By informally using Taylor's approximation,
\begin{align*}
	A\Nth f(\x) ={} & \alpha_N \E{f(\X\Nth(k+1))- f(\x) \mid \X\Nth(k)=\x }
	\\
	\approx {}&
	\alpha_N  \left\{ 
	\sum_{i=1}^d \E{X_i\Nth(k+1)- x_i \mid \X\Nth(k)=\x }
	\frac{\partial}{\partial x_i} f(\x) \right.
	\\
	& {}+ \frac{1}{2} \left.
	\sum_{i,j=1}^d 
	\E{\left( X_i\Nth(k+1)- x_i \right) \left( X_j\Nth(k+1)- x_j \right)  \mid \X\Nth(k)=\x }
	\frac{\partial^2}{\partial x_i \partial x_j} f(\x)
	\right\}
	\\
	& \xrightarrow[N\to\infty]{}  A f(\x) =
	\sum_{i=1}^d  b_{i}(\x)
	\frac{\partial}{\partial x_i} f(\x) 
	+ \frac{1}{2} 
	\sum_{i,j=1}^d a_{ij}(\x)
	\frac{\partial^2}{\partial x_i \partial x_j} f(\x),
\end{align*}
where 
\begin{align*}
	b_{i}(\x)  & = 
	\lim_{N\to \infty} \alpha_N \E{X_i\Nth(k+1)- x_i \mid \X\Nth(k)=\x }
	=
	\lim_{N\to \infty} \alpha_N  \Ex{ \Delta X_i\Nth } 
	,
	\\
	a_{ij}(\x) &=
	\lim_{N\to \infty} \alpha_N
	\E{\left( X_i\Nth(k+1)- x_i \right) \left( X_j\Nth(k+1)- x_j \right)  \mid \X\Nth(k)=\x }
	\\ &=  \lim_{N\to \infty} \alpha_N
	\Ex{ \Delta X_i\Nth \Delta X_j\Nth }
	,
\end{align*}
with $\Ex{ \Delta X_i\Nth }$ and $\Ex{ \Delta X_i\Nth \Delta X_j\Nth } $ denoting the respective conditional mean and covariance of the infinitesimal increments $\Delta X_i \Nth := X_i\Nth (k+1) - X_i\Nth(k)$, $i=1,\dots,d,$ conditioned on the current value $\X\Nth(k)=\x$. 
While the reasoning above only provides a sketch of a convergence proof, it shows  that these means and covariances are the  quantities of interest, 
which we use in the rest of this section to recap several (well- or lesser-) known results concerning convergence of WF models.

\subsection{Convergence: deterministic limit vs Wright--Fisher diffusion }   
Consider a sequence $\{\X\Nth(k)\}_{k\in \N}$ of WF models, 
each with finite population size $N$, and mutation and selection parameters given for $i,j=1,\dots,d$ by
\begin{align*}
	u_{ij}\Nth &= \epsilon\Nth \frac{\theta}{2} P_{ij}, \quad i\neq j; & u_{ii}\Nth &=1- \epsilon\Nth \frac{\theta}{2} (1-P_{ii});
	\\
	s_i\Nth &= \epsilon_i\Nth \frac{\sigma_i}{2};
\end{align*}
where the  scaling sequences $\epsilon\Nth, \epsilon\Nth_i$ are nonnegative and  decreasing to zero but yet to be defined. 
Depending on the scaling chosen, two kinds of limits for the sequence $\{\X\Nth(\lfloor \alpha_N t \rfloor)\}_{t \geq 0} $ can be obtained: (a) deterministic limits, including an analogy of Proposition \ref{thm:diffusion_limits}(a), or (d) WF diffusion limits, as  illustrated in the following. 

To ease the reading, we use the classical big $\mathcal{O}$ notation, and,  
for sequences $b_N,b'_N\to 0$, we write $b_N \prec b'_N $ to indicate that $b_N=  o(b'_N)$, i.e.\ $\lim_{N\to\infty}\frac{b_N}{b'_N}=0$.
Let us now compute the means and the covariances of the infinitesimal increments of this sequence of models.

\subsubsection*{Means of infinitesimal increments}
The means of the infinitesimal increments of the  $\alpha_N$-timescaled WF model can be expressed as  
\begin{align*}
	\alpha_N \Ex{ \Delta X_i\Nth } 
	&= 
	\alpha_N \left\{ \E{X_i\Nth(k+1)\mid \X\Nth(k)=\x }
	-x_i \right\}
	\\ &=
	\alpha_N \left\{q_i\Nth(\x) -x_i\right\}
	\\
	&= \alpha_N 
	\frac{\sum_{j=1}^d (1+s\Nth_j)x_ju_{ji}\Nth - 
		x_i\sum_{j=1}^d (1+s\Nth_j)x_j}
	{\sum_{j=1}^d (1+s\Nth_j)x_j}
	\\
	& = \alpha_N \epsilon\Nth  
	\frac{\theta}{2}  \left( \sum_{j=1}^d P_{ji}x_j - x_i   \right)
	+
	\frac{x_i}{2}  \left(\alpha_N \epsilon\Nth_i \sigma_i - \sum_{j=1}^d \alpha_N \epsilon\Nth_j \sigma_j x_j \right)
	\\ & \quad {}+  \sum_{j=1}^d \mathcal{O}(\alpha_N \epsilon\Nth \epsilon_j\Nth). 
\end{align*}
Then,
\begin{align}
	\label{inf_drift}
	\lim_{N\to \infty}\alpha_N \Ex{ \Delta X_i\Nth } 
	&=\begin{cases}
		\text{(i) } & \mu_i(\x)+s_i(\x)= 
		\frac{\theta}{2}  \left( \sum_{j=1}^d P_{ji}x_j - x_i   \right)
		+
		\frac{x_i}{2}  \left(\sigma_i - \sum_{j=1}^d   \sigma_j x_j \right)
		\\
		& \text{if } \alpha_N^{-1}= 
		\epsilon\Nth = \epsilon_i\Nth,\,  i=1,\dots,d;
		\\
		\text{(ii) } & \sigma_1 \frac{x_1}{2} (\delta_{1 i } -  x_i )\\
		& \text{if } \alpha_N^{-1}= \epsilon_1\Nth \succ
		\epsilon\Nth, \epsilon_i\Nth,\,  i=2,\dots,d.
	\end{cases}     
\end{align}
Other finite limits for the drift above can be obtained as long as the strongest parameters behave like $\alpha_N^{-1}$, while the others are  smaller (the forces corresponding to the smaller parameters vanish in the limit). If all parameters are smaller than $\alpha_N^{-1}$ then the limit above is zero, whereas if some parameters are bigger than $\alpha_N^{-1}$ then the limit is infinite. From now on we assume the limit above is finite.
\subsubsection*{Covariances of infinitesimal increments}
The covariances of the infinitesimal increments of the  $\alpha_N$-timescaled WF model can be expressed as 
    \begin{align*}
    & \alpha_N \Ex{ \Delta X_i\Nth \Delta X_j\Nth } 
\\ & \quad = 
   \alpha_N 
\cov\left(X_i\Nth(k+1)X_j\Nth(k+1)\mid \X\Nth(k)=\x \right)
+ \alpha_N (q_i\Nth(\x) -x_i)(q_j\Nth(\x) -x_j) 
\\ & \quad =
\frac{\alpha_N}{N}q_i\Nth(\x) 
(\delta_{ij} -q_j\Nth(\x))
+\alpha_N (q_i\Nth(\x) -x_i)(q_j\Nth(\x) -x_j)
    \\ &  \quad=
    \frac{\alpha_N}{N} x_i
    (\delta_{ij} -x_j) 
    + \mathcal{O}\left( \frac{\alpha_N}{N}\epsilon\Nth\right)
    + \sum_{k=1}^d\mathcal{O}\left(\frac{\alpha_N}{N}\epsilon_k\Nth\right)
    + \alpha_N
    \Ex{ \Delta X_i\Nth } \Ex{ \Delta X_j\Nth }
    \\ & \quad  = 
    \frac{\alpha_N}{N} x_i
    (\delta_{ij} -x_j) 
    + \mathcal{O}\left( \frac{\alpha_N}{N}\epsilon\Nth\right)
    + \sum_{k=1}^d\mathcal{O}\left(\frac{\alpha_N}{N}\epsilon_k\Nth\right)
    +  \mathcal{O}(\epsilon\Nth )+
    \sum_{k=1}^d \mathcal{O}( \epsilon_k\Nth),
    \end{align*}
where in the last step $\alpha_N \Ex{ \Delta X_i\Nth } \Ex{ \Delta X_j\Nth }$ converges to zero because we are assuming that $\alpha_N \Ex{ \Delta X_i\Nth }$ has a finite limit. Thus,
\begin{align}
	\label{inf_covariance}
	\lim_{N\to \infty}\alpha_N \Ex{ \Delta X_i\Nth\Delta X_j\Nth } 
	&=\begin{cases}
		\text{(a) } & 0 \quad \text{(deterministic limit)}\\
		& \text{if } \alpha_N^{-1}\succ \frac{1}{N}; \\
		\text{(d) } & x_i(\delta_{ij} -x_j) \quad \text{(WF diffusion)}\\
		& \text{if }  \alpha_N^{-1}=\frac{1}{N};\\
		& \infty \quad \text{(explosion)}\\
		& \text{if } \alpha_N^{-1} \prec \frac{1}{N}.
	\end{cases} 
\end{align}
From now on we assume the limit above is finite. 
In order to have finite limits in both \eqref{inf_drift} and  \eqref{inf_covariance},
the timescale $\alpha_N$ must be chosen to be growing inversely to the decay of the largest parameter, and the latter decay should be equal to, or larger than, $N^{-1}$.
The case when it is equal, $\alpha_N = N^{-1}$, and the WF diffusion limit is obtained, can be referred to as weak mutation or selection, since these latter processes operate on a timescale no faster than genetic drift. On the other hand, the case when it is larger, $\alpha_N^{-1}\succ \frac{1}{N}$, and a deterministic limit is obtained, can be referred to as strong mutation or selection. The strong selection regime we consider in this paper might seem different because we start with a WF diffusion which is obtained as a limit of WF models with weak mutation and selection, but it is in fact analogous because we then let the first selection parameter $\sigma_1$ of the diffusion go to infinity.

We now consider each valid combination of limiting cases for the infinitesimal mean and infinitesimal covariance.

\subsubsection*{Case (a, i) $\left[ \frac{1}{N} \prec \alpha_N^{-1}= 
	\epsilon\Nth = \epsilon_i\Nth, \; i=1,\dots,d \right]$}

All mutation and selection parameters have the same strength. Mutation and selection are stronger than genetic drift, so a deterministic limit is obtained; 
that is, 
$\{\X\Nth(\lfloor \alpha_N t \rfloor)\}_{t \geq 0} \to \{\boldsymbol{\chi}( t )\}_{t \geq 0}$ as $N\to\infty$, where $\boldsymbol{\chi}$ is the solution to the ODE
\begin{align*}
	\frac{d}{dt} \chi_i (t)= \mu_i(\boldsymbol{\chi}(t))+s_i(\boldsymbol{\chi}(t)) =
	\frac{\theta}{2}  \left( \sum_{j=1}^d P_{ji}\chi_j(t) - \chi_i (t)  \right)
	+
	\frac{\chi_i(t)}{2}  \left(\sigma_i - \sum_{j=1}^d   \sigma_j \chi_j (t)\right).
\end{align*}
Assuming neutrality and PIM, 
the above ODE becomes
\begin{align*}
	\frac{d}{dt} \chi_i (t)= 
	\frac{\theta}{2}  \left( Q_i - \chi_i (t)  \right), \quad i=1,\dots,d,
\end{align*}
with explicit solution
\begin{align*}
	\chi_i(t)= Q_i + (\chi_i(0)-Q_i)e^{-\frac{\theta}{2}t}, \quad i=1,\dots,d,
\end{align*}
and $\chi_i(\infty)=\lim_{t\to\infty}\chi_i(t)=Q_i$.

\subsubsection*{Case  (a, ii)
	$\left[\frac{1}{N} \prec \alpha_N^{-1}= \epsilon_1\Nth \succ
	\epsilon\Nth, \epsilon_i\Nth, \; i=2,\dots,d, \; \sigma_1=1 \right] $}

The selection parameter of the first allele is stronger than the selection parameters of the other alleles and of all the mutation parameters. Selection on the first allele is stronger than genetic drift, so a deterministic limit is obtained; that is, 
$\{\X\Nth(\lfloor \alpha_N t \rfloor)\}_{t \geq 0} \to \{\xii( t )\}_{t \geq 0}$ as $N\to\infty$, where the deterministic $\xi$ is the solution to the ODE \eqref{eq:ODE_deterministic_trajectory}, with equilibrium point $\xii(\infty)=\e_1$.
Obviously, this case corresponds to setting $\sigma_i=0$ for $i=2,\dots d$, in  Case (a, i). 
Furthermore, this deterministic limit can also be obtained by first obtaining a WF diffusion, as in Case (d, i) below, and subsequently letting $\sigma_1\to\infty$ (while rescaling time by $\sigma_1$), precisely as in Proposition \ref{thm:diffusion_limits}(a).  

\subsubsection*{Case (d, i) $ \left[\frac{1}{N} = \alpha_N^{-1}= 
	\epsilon\Nth = \epsilon_i\Nth, \;  i=1,\dots,d  \right] $}

All mutation and selection parameters have the same strength.
Mutation and selection are comparable to genetic drift, so the WF diffusion of Section \ref{sect:diffusions} is obtained; that is, 
$\{\X\Nth(\lfloor \alpha_N t \rfloor)\}_{t \geq 0} \xrightarrow[]{d} \{\X( t )\}_{t \geq 0}$ as $N\to\infty$ where $\X$ is the diffusion with drift terms $\mu_i(\x) + s_i (\x) $ and the typical WF diffusion coefficients $x_i (\delta_{ij} -x_j)$, 
as defined in \eqref{generatorWFdiff}.

\subsubsection*{Case  (d, ii) $ \left[ \frac{1}{N} = \alpha_N^{-1}= \epsilon_1\Nth \succ  \epsilon\Nth, \epsilon_i\Nth , i= 2,\dots,d; \right] $}
The selection parameter of the first allele is stronger than the selection parameters of the other alleles and of all the mutation parameters. Selection on the first allele is comparable to genetic drift, so a WF diffusion is obtained which  
corresponds to setting $\sigma_i=0$ for $i=2,\dots,d$, and $\theta=0$ in the limiting WF diffusion of case (d, i).

\subsection{Fluctuations around the deterministic limit}

When a deterministic limit is obtained for the sequence of WF models, as in cases (a, i) and (a, ii), it is possible to derive asymptotic results for fluctuations around the deterministic limit or its discretised version. 
During the 70--80s, Norman and Nagylaki showed that these fluctuations can be approximated by Gaussian diffusions, e.g.\  \cite{Norman1972,Norman1975} and \cite{Nagylaki1986}. 
Their results cover our cases (a, i) and (a, ii), but involve lengthy calculations, complicated conditions to ensure convergence and not very explicit expressions for the limiting quantities. See also Section \ref{sect:diffusions} for further comment. We refrain from repeating their calculations here and refer instead to Proposition \ref{thm:diffusion_limits}(b) which contains an analogous result for the case of interest (a, ii), starting from the WF diffusion rather than the corresponding WF model; this avoids conflating the asymptotics of the parameters of interest with the large population size limit of the discrete model and is based on an easier-to-read martingale approach. Our approach is in the spirit of a sequence of papers starting with \cite{kurtz1971} who obtained scaling-limit fluctuation results taking as the starting point a sequence of finite-population models \emph{already in continuous time}; see also \cite[Ch.\ 11]{Ethier1986} and \cite{kang2014} for more recent treatments.

\subsection{Fluctuations around the equilibrium point}
Considering the equilibrium point at stationarity of the deterministic limit, assuming it exists, it is also possible to study the asymptotic behaviour of fluctuations around this equilibrium point, see e.g.\ \cite[Sect.\ 9]{Feller1951}. 
In several cases, the Gaussian approximations of the previous section extend to the stationary regime, see e.g.\ \cite{Norman1972,Norman1974,Norman1975b} and \cite{Nagylaki1986}, provided that certain conditions concerning the equilibrium point are satisfied.
However, the extension fails when the equilibrium point is on the boundary, as in our case of interest (a, ii), which requires a different treatment and a different scaling leading to a \emph{non-Gaussian} approximation. \cite{Ethier1988} and \cite{Nagylaki1990} obtain this limit though without phrasing it in terms of fluctuations. 
In the following, we illustrate how this  alternative spatial scaling leads to the non-Gaussian limiting diffusion owing to  the equilibrium point being on the boundary. This convergence of WF models is analogous to the convergence of WF diffusions in Proposition \ref{thm:diffusion_limits}(c).

\subsubsection*{Case (a, i) with PIM under neutrality} 
The neutral two-allele setting is studied in \cite[Sect.\ 9]{Feller1951}. Let us translate it into our notation and generalise it to the multi-allele, PIM setting. 
In order to study convergence of the sequence
$\Z\Nth(\lfloor \alpha_N t \rfloor)=a_N(\X\Nth(\lfloor \alpha_N t \rfloor)-\Q)$,
where $\Q$ is the equilibrium point and $a_N$ is a space scaling to be defined, 
we calculate the means of its infinitesimal increments 
\begin{align*}
	\alpha_N \mathbb{E}_{\z}\left[
	\Delta Z_i\Nth
	\right]  
	&= 
	\alpha_N a_N 
	\mathbb{E}_{a_N^{-1}\z+\Q}
	\left[  \Delta X_i\Nth
	\right]
	\approx 
	\alpha_N a_N\left\{  
	- a_N ^{-1} \epsilon\Nth  
	\frac{\theta}{2} z_i
	\right\}
	=
	- \frac{\theta}{2} z_i;
\end{align*} 
and the covariances
\begin{align*}
	\nonumber
	\alpha_N \mathbb{E}_{\z}\left[
	\Delta Z_i\Nth \Delta Z_j\Nth
	\right]  
	&= 
	\alpha_N a_N^2 
	\mathbb{E}_{a_N^{-1}\z+\Q}
	\left[  \Delta X_i\Nth \Delta X_j\Nth
	\right]
	\\ &=
	\frac{\alpha_N a_N^2 }{N}
	q_i\Nth(a_N^{-1}\z+\Q) 
	(\delta_{ij} -q_j\Nth(a_N^{-1}\z+\Q))
	\\ \nonumber
	&\approx
	\frac{\alpha_N a_N^2 }{N}
	Q_i (\delta_{ij}- Q_j)
	= 
	\frac{ a_N^2 }{\epsilon\Nth N}
	Q_i (\delta_{ij}- Q_j) .
\end{align*}
Therefore, by choosing the space scaling $a_N= \sqrt{ \epsilon\Nth N}$ and assuming $\Q$ has all its entries positive, 
the sequence
$\Z\Nth$
converges to an Ornstein--Uhlenbeck process $\{\Z(t)\}_{t\geq 0}$ that solves the SDE
\begin{align*}
	d \Z(t)= -  \frac{\theta}{2} \Z(t)dt
	+ \Sigma^{1/2} d \W(t) ,
\end{align*}
where $\Sigma$ is a constant matrix with components $\Sigma_{ij}= Q_i(\delta_{ij}-Q_j)\in (0,1)$.
This is an example of the fact that, when the equilibrium point is in the interior of the domain, the scaling limit of the fluctuations around the equilibrium point is a Gaussian process. 

\subsubsection*{Case (a, i) with general mutation and selection} 
As long as the equilibrium point $\boldsymbol{\chi}(\infty)$ exists and is not on the boundary, (i.e.\ none of its components is zero),
from the previous calculations it follows that choosing again $a_N= \sqrt{ \epsilon\Nth N}$ yields an analogous result, which leads to a constant diffusion matrix $\Sigma_{ij}= \chi_i(\infty)(\delta_{ij}-\chi_j(\infty)) $. The drift term changes accordingly and the limit is still a Gaussian process.

\subsubsection*{Case (a, ii)}
In this case, the equilibrium point is $\e_1$ which crucially is on the boundary. 
To study convergence of the sequence $\Z\Nth(\lfloor \alpha_N t \rfloor)=a_N(\X\Nth(\lfloor \alpha_N t \rfloor)-\e_1)$, with a space scaling $a_N$ to be defined, 
we calculate the means and covariances of its infinitesimal increments. 
We omit the first component $Z_1$, without loss of information, as the components sum to zero. 
For $i=2,\dots,d$,
\begin{align*}
	\alpha_N \mathbb{E}_{\z}\left[
	\Delta Z_i\Nth
	\right]  
	&= 
	\alpha_N \E{a_N X_i\Nth(k+1) -z_i\mid \X\Nth(k)=a_N^{-1}\z+e_1 }
	\\ &=
	\alpha_N a_N 
	\mathbb{E}_{a_N^{-1}\z+e_1}
	\left[  \Delta X_i\Nth
	\right]
	\\
	& =
	\alpha_N \left\{     a_N \epsilon\Nth  
	\frac{\theta}{2} P_{1i}
	-  \epsilon_1\Nth 
	\frac{z_i}{2} 
	+
	\mathcal{O}(\epsilon\Nth)
	+ \mathcal{O}(\epsilon_i\Nth)
	+ \mathcal{O}(a_N^{-1}\epsilon_1\Nth)
	\right\}
	\\ & =
	-     \frac{z_i}{2} 
	+
	a_N \frac{\epsilon\Nth }{\epsilon\Nth_1} 
	\frac{\theta}{2} P_{1i}
	+ \mathcal{O}\left(\frac{\epsilon\Nth }{\epsilon\Nth_1}\right)
	+ \mathcal{O}\left(\frac{\epsilon_i\Nth }{\epsilon\Nth_1}\right)
	+ \mathcal{O}(a_N^{-1}),
\end{align*} 
and, for $i,j=2,\dots,d $,
\begin{align*}
	\nonumber
	\alpha_N \mathbb{E}_{\z}\left[
	\Delta Z_i\Nth \Delta Z_j\Nth
	\right]  
	&= 
	\alpha_N a_N^2 
	\mathbb{E}_{a_N^{-1}\z+e_1}
	\left[  \Delta X_i\Nth \Delta X_j\Nth
	\right]
	\\ &=
	\frac{\alpha_N a_N^2 }{N}
	q_i\Nth(a_N^{-1}\z+e_1) 
	(\delta_{ij} -q_j\Nth(a_N^{-1}\z+e_1))
	\\ \nonumber
	& \quad {}+
	\alpha_N a_N^2 
	\mathbb{E}_{a_N^{-1}\z+e_1}
	\left[  \Delta X_i\Nth  \right]
	\mathbb{E}_{a_N^{-1}\z+e_1}
	\left[ \Delta X_j\Nth \right]
	\\ \nonumber
	&=
	\frac{ a_N }{\epsilon_1\Nth N} z_i \delta_{ij}
	+
	\mathcal{O} \left(\frac{1 }{\epsilon_1\Nth N}\right)
	+
	\mathcal{O} \left(\frac{a_N }{N}\right)
	\\ \nonumber
	& \quad 
	{}+ \mathcal{O}(\epsilon_1\Nth )
	+ \mathcal{O}(a_N\epsilon\Nth )
	+ \mathcal{O}(a_N^{-1}).
\end{align*}
By choosing the space scaling $a_N= N\epsilon_1\Nth$, 
\begin{align*}
\lim_{N\to \infty}
\alpha_N \mathbb{E}_{\z} 
\left[\Delta Z_i\Nth  \Delta Z_j\Nth \right]
&= 
z_i \delta_{ij} ;     
\end{align*}
and with this spatial scale, the drift depends on the strength of the mutation parameters, that is, 
\begin{align*}
\lim_{N\to \infty}
\alpha_N \mathbb{E}_{\z} 
\left[
\Delta Z_i\Nth  \right]  
&=\begin{cases}
	-\frac{z_i}{2} & \text{if }  \epsilon_N \prec \frac{1}{N};
	\\
	-\frac{z_i}{2} + \frac{\theta}{2} P_{1i} & \text{if } \epsilon_N= \frac{1}{N}. 
\end{cases}     
\end{align*}
Note that the space scale $a_N= N\epsilon_1\Nth$ here is larger than the one in  case (a, i), which is instead $a_N= \sqrt{N\epsilon_1\Nth}$. This fundamental difference is why we can no longer appeal to a functional central limit theorem. The reason for the difference is that the term
\[
q_i\Nth(a_N^{-1}\z+e_1)(\delta_{ij} -q_j\Nth(a_N^{-1}\z+e_1))
\]
in the covariance calculations above 
decays like $a_N^{-1}$ due to the equilibrium point being on the boundary, whereas 
\[
q_i\Nth(a_N^{-1}\z+\Q)(\delta_{ij} -q_j\Nth(a_N^{-1}\z+\Q))
\]
in the covariance calculations of the neutral PIM case (a, i)  
converges to $Q_i (\delta_{ij}-Q_j)>0$.
Therefore, in the first case, the equilibrium point on the boundary introduces a factor $a_N$ instead of $a_N^2$ in the infinitesimal covariance and thus requires a different scaling. 

It is now clear that our convergence result in Proposition \ref{thm:diffusion_limits}(c)  concerning the WF diffusion is analogous to a convergence result for the WF model with the following characteristics: 
\begin{itemize}[noitemsep]
\item
selection on the first allele is stronger than selection on other alleles and on mutation:  \\
$
\epsilon_1\Nth \succ
\epsilon\Nth, \epsilon_i\Nth,\, i=2,\dots,d, \; \sigma_1=1
$;
\item time scaling: $\alpha_N^{-1}= \epsilon_1\Nth $;
\item 
selection on the first allele is stronger than genetic drift: 
$\epsilon_1\Nth \succ \frac{1}{N}$;
\item  spatial scaling: 
$a_N= N\epsilon_1\Nth$;
\item  mutation strength is comparable to genetic drift:  
$\epsilon\Nth=\frac{1}{N}$.
\end{itemize}
In fact, under the assumptions above, the sequence of rescaled WF models $\Z\Nth$
converges to a diffusion
$ \{\Z( t )\}_{t \geq 0}$ 
with diffusion coefficients 
$z_i \delta_{ij},$ $i,j=2,\dots,d$, and 
drift coefficients 
$-\frac{z_i}{2} + \frac{\theta}{2} P_{1i}$, $i=2,\dots,d$, 
which is precisely the solution to the SDE \eqref{eq:SDE_CBI} and corresponds to 
the limit of the sequence of WF diffusions $ \sigma ( \X^{(\sigma)}(\sigma^{-1} t ) - \e_1 )$, $\sigma\to\infty$, as considered  in Proposition \ref{thm:diffusion_limits}(c). An analogous diffusion is also derived in \cite{Ethier1988} and \cite{Nagylaki1990}
in a similar setting for rare alleles.



\end{document}